\documentclass{article}

\usepackage{microtype}
\usepackage{graphicx}
\usepackage{subfigure}
\usepackage{booktabs} 
\usepackage{hyperref}


\usepackage[accepted]{icml2021}

\usepackage{amsmath,amssymb,amsthm,graphicx,mathrsfs,url}

\usepackage{enumitem}
\usepackage[english]{babel}
\usepackage[utf8]{inputenc}     
\usepackage[T1]{fontenc}         
\usepackage{lmodern}             
\usepackage{mathtools}
\usepackage{bbm,bm}
\usepackage{enumitem}
\numberwithin{equation}{section}
\usepackage{url}
\usepackage[capitalize,noabbrev]{cleveref}

\graphicspath{{fig/}}

\newcommand{\R}{\mathbb{R}}
\newcommand{\N}{\mathbb{N}}

\newcommand{\DD}{\mathcal{D}}
\newcommand{\PP}{\mathcal{P}}
\newcommand{\QQ}{\mathcal{Q}}

\newcommand{\E}{\mathbb{E}}


\newcommand{\MM}{\mathcal{M}}
\newcommand{\CC}{\mathcal{C}}
\newcommand{\FF}{\mathcal{F}}

\renewcommand{\SS}{\mathcal{S}}

\newcommand{\dd}{\mathrm{d}}
\newcommand{\dgm}{\mathrm{Dgm}}

\newcommand{\eps}{\varepsilon}

\newcommand{\Pers}{\mathrm{Pers}}  
\newcommand{\diam}{\mathrm{diam}}
\newcommand{\upperdiag}{\Omega} 
\newcommand{\groundspace}{\overline{\upperdiag}} 

\newcommand{\thediag}{{\partial \Omega}} 

\newcommand{\supp}{\mathrm{spt}}

\newcommand{\dist}{\mathrm{dist}}

\newcommand{\defeq}{\vcentcolon=}

\newcommand{\projdiag}{\mathrm{proj}_\thediag}


\newcommand{\Adm}{\mathrm{Adm}}


\newcommand{\id}{\mathrm{id}}
\renewcommand{\epsilon}{\varepsilon}

\newcommand{\bc}{\mathbf{c}}
\newcommand{\bC}{\mathbf{C}}

\newcommand{\ones}{\mathbf{1}}

\newcommand{\EPD}{\mathbf{E}(P)} 
\renewcommand{\epsilon}{\varepsilon}

\newcommand{\OT}{\mathrm{OT}}

\newcommand{\Dp}{\mathrm{OT}_p}

\newcommand{\mmin}{m_\mathrm{min}}
\newcommand{\mmax}{m_\mathrm{max}}
\newcommand{\Dmin}{D_\mathrm{min}}

\newcommand{\p}[1]{\left(#1 \right)}

\DeclareMathOperator*{\argmin}{arg\,min}


%

\newcommand{\Cech}{\v Cech}

\newtheorem{definition}{Definition}
\newtheorem{theorem}{Theorem}
\theoremstyle{plain}
\newtheorem{prop}[theorem]{Proposition} 
\newtheorem{lemma}{Lemma}
\newtheorem{corollary}{Corollary}
\newtheorem{remark}{Remark}
\newtheorem*{theorem*}{Theorem}


\icmltitlerunning{Estimation and quantization of EPDs}  

\begin{document}

\twocolumn[
\icmltitle{Estimation and Quantization of Expected Persistence Diagrams}



\icmlsetsymbol{equal}{*}

\begin{icmlauthorlist}
\icmlauthor{Vincent Divol}{equal,inria,lmo}
\icmlauthor{Théo Lacombe}{equal,inria}
\end{icmlauthorlist}

\icmlaffiliation{inria}{DataShape, Inria Saclay, France}
\icmlaffiliation{lmo}{Laboratoire de Mathématiques d'Orsay, Université Paris-Sud, France}

\icmlcorrespondingauthor{Vincent Divol}{vincent.divol@inria.fr}
\icmlcorrespondingauthor{Théo Lacombe}{theo.lacombe@inria.fr}

\icmlkeywords{Optimal Transport, Topological Data Analysis, Algorithms, Quantization, Statistical learning.}

\vskip 0.3in
]



\printAffiliationsAndNotice{\icmlEqualContribution} 

\begin{abstract}
Persistence diagrams (PDs) are the most common descriptors used to encode the topology of structured data appearing in challenging learning tasks;~think e.g.~of graphs, time series or point clouds sampled close to a manifold.
Given random objects and the corresponding distribution of PDs, one may want to build a statistical summary---such as a mean---of these random PDs, which is however not a trivial task as the natural geometry of the space of PDs is not linear. 
In this article, we study two such summaries, the Expected Persistence Diagram (EPD), and its quantization. The EPD is a measure supported on $\R^2$, which may be approximated by its empirical counterpart. We prove that this estimator is optimal from a minimax standpoint on a large class of models with a parametric rate of convergence. The empirical EPD is simple and efficient to compute, but possibly has a very large support, hindering its use in practice. To overcome this issue, we propose an algorithm to compute a quantization of the empirical EPD, a measure with small support which is shown to approximate with near-optimal rates a quantization of the theoretical EPD.
\end{abstract}

\vspace{-.5cm}
\section{Introduction}
\vspace{-.1cm}
Topological data analysis (TDA) is a modern field in data science which has found a variety of succesful domains of application such as material science \citep{tda:saadatfar2017pore,tda:buchet2018persistent}, cellular data \citep{tda:camara2017topological}, social graph classification \citep{tda:zhao2019learning,tda:carriere2019perslay}, shape analysis \citep{tda:li2014persistence,tda:carriere2015stableSignature3DShape} to name a few. It provides a machinery to encode the topological properties (such as the presence of connected components, loops, cavities, etc.) of a structured object in a multi-scale fashion. Relying on persistent homology theory \citep{tda:edelsbrunner2000topologicalSeminal, tda:zomorodian2005computing, tda:edelsbrunner2010computational}, its main output is a descriptor called a \emph{persistence diagram} (PD): it is a discrete measure $\sum_{i\in I} \delta_{x_i}$ (roughly, a set of points) supported on the open half-plane $\upperdiag =\{(t_1,t_2)\in\R^2,\ t_2 > t_1 \}$, where each point $x_i$ of the PD accounts in a quantitative way for the presence of a topological feature in a given object. 
The space of PDs, $\DD$, is equipped with an \emph{optimal partial transport} metric $\Dp$, where $1 \leq p \leq \infty$, which shares similarities with the so-called Wasserstein metric $W_p$ used in the optimal transport literature \citep{ot:villani2008optimal,otam}. 

\vspace{-.1cm}
\textbf{Statistics with PDs.}
In applications, one is generally led to consider a sample of several PDs, say $\mu_1, \dots, \mu_n$, encoding the topology of some underlying phenomenon generating the different observations. 
Assuming that these PDs are sampled i.i.d.~according to some underlying distribution $P$, it is natural to search for some characteristic quantities to describe $P$. 
As the space of PDs $(\DD, \Dp)$ is not a vector space, but only a metric space, even building elementary statistics is a difficult task. For instance, approximating Fr\'echet means (a.k.a.~barycenters) of a sample of PDs with respect to $\Dp$ metrics requires to develop specific techniques \citep{tda:turner2014frechet, tda:lacombe2018large, tda:vidal2019progressive}, while their exact computation is intractable.
An alternative is to embed the PDs in a Hilbert or Banach space, using explicit vectorizations \citep{tda:bubenik2015statistical, tda:adams2017persistenceImages} or implicit through kernel methods \citep{tda:reininghaus2015stable, tda:carriere2017sliced}, then using standard statistical and learning tools. 
However, such embeddings do not preserve the metric structure of the space of PDs  \citep{tda:bubenik2019embeddings,tda:wagner2019nonembeddability} nor the interpretability of PDs. 
In comparison, the \emph{expected persistence diagram} (EPD) $\mathbf{E}(P)$ of a distribution $P$ of PDs lies in a natural  metric extension of the space of PDs while its empirical counterpart can be computed faithfully. 
Originally introduced in \citep{tda:divol2018density}, the EPD is a measure on $\upperdiag$ which associates to each set $A \subset \upperdiag$ the expected number of points which belongs to $A$ in the random diagrams $\mu\sim P$. 
The properties of this  object were studied in \citep{tda:divol2018density,tda:divol2019understanding}.

\begin{figure*}
	\center
	\includegraphics[width=0.12\textwidth]{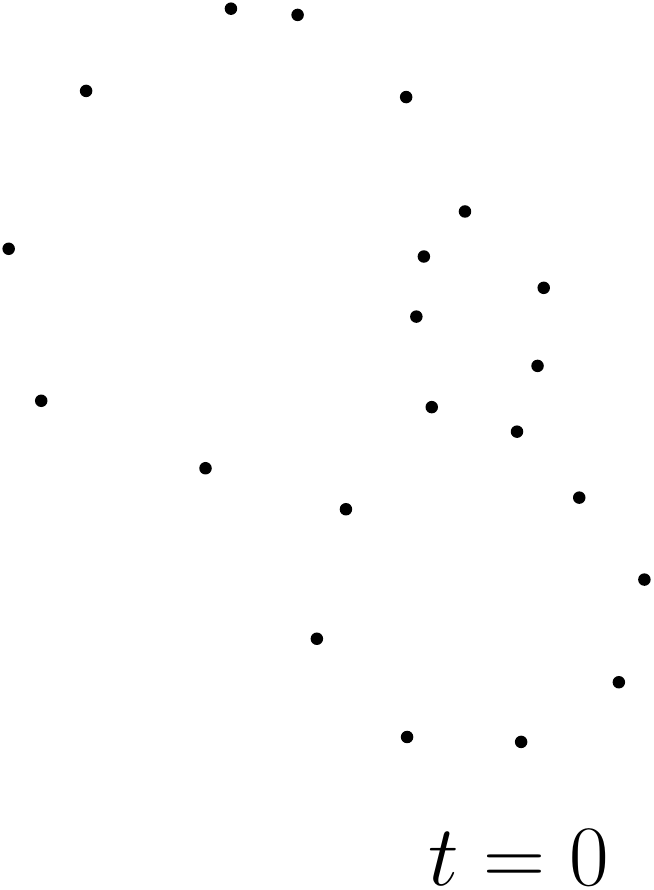}
	\includegraphics[width=0.12\textwidth]{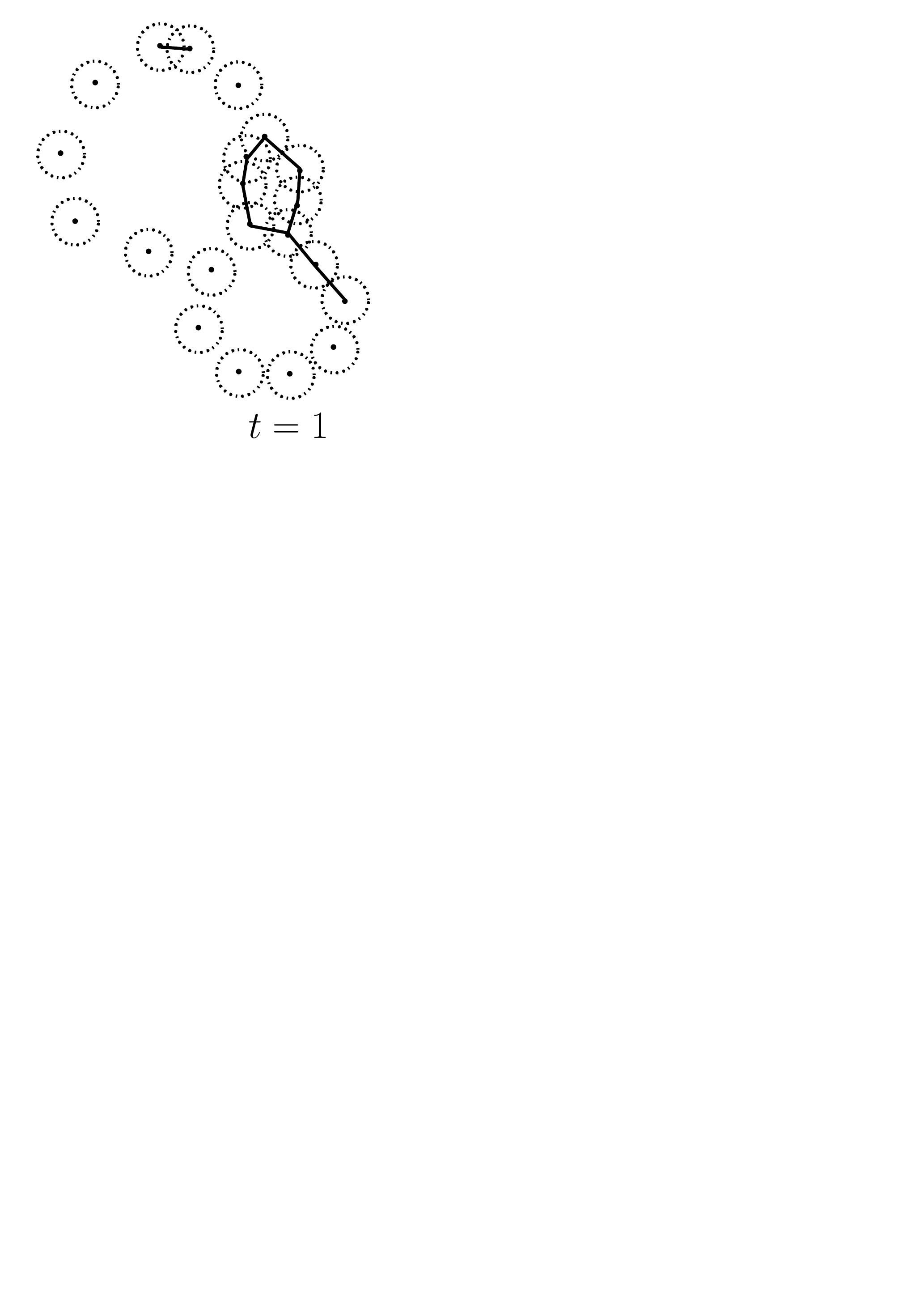}
	\includegraphics[width=0.12\textwidth]{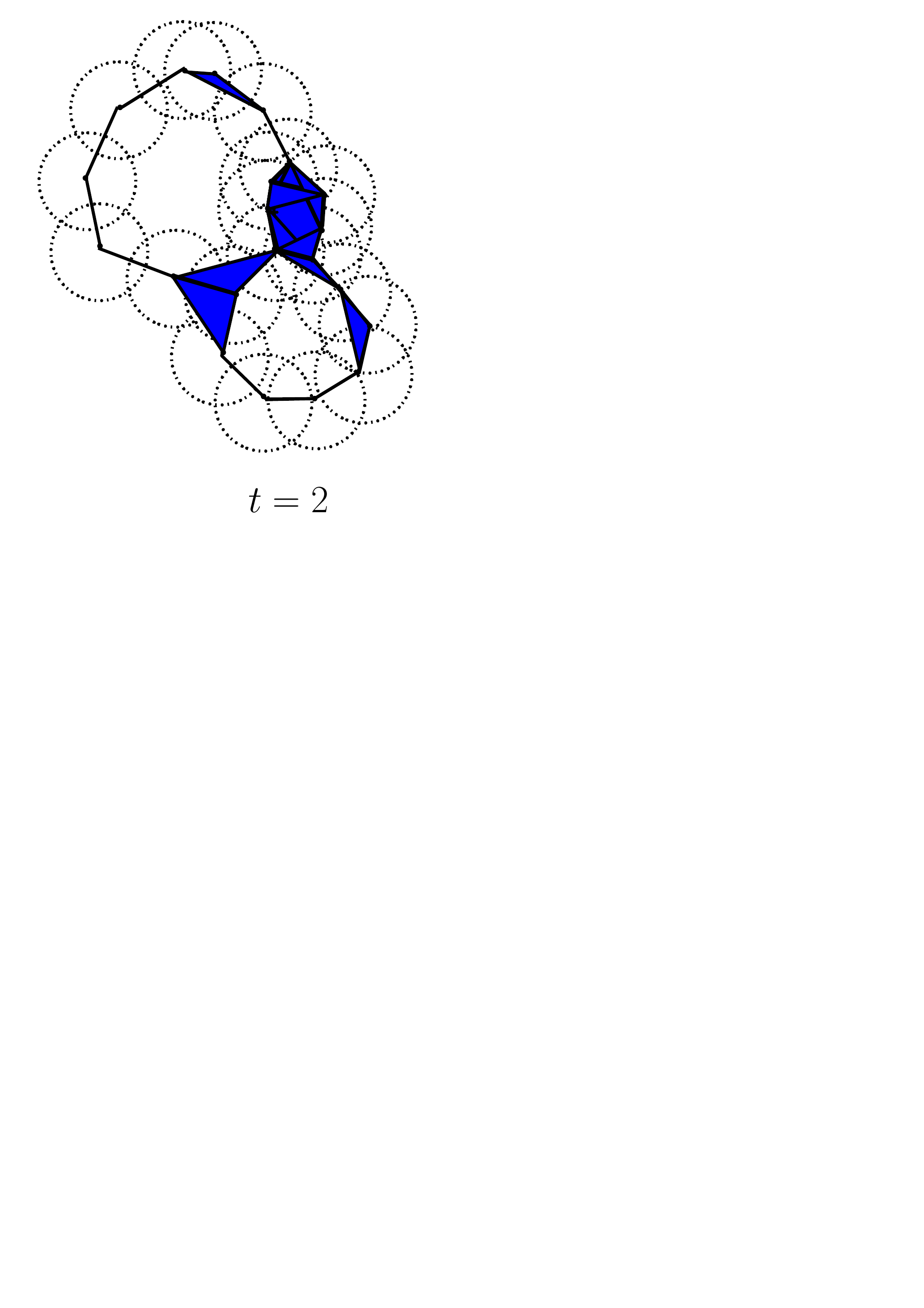}
	\includegraphics[width=0.12\textwidth]{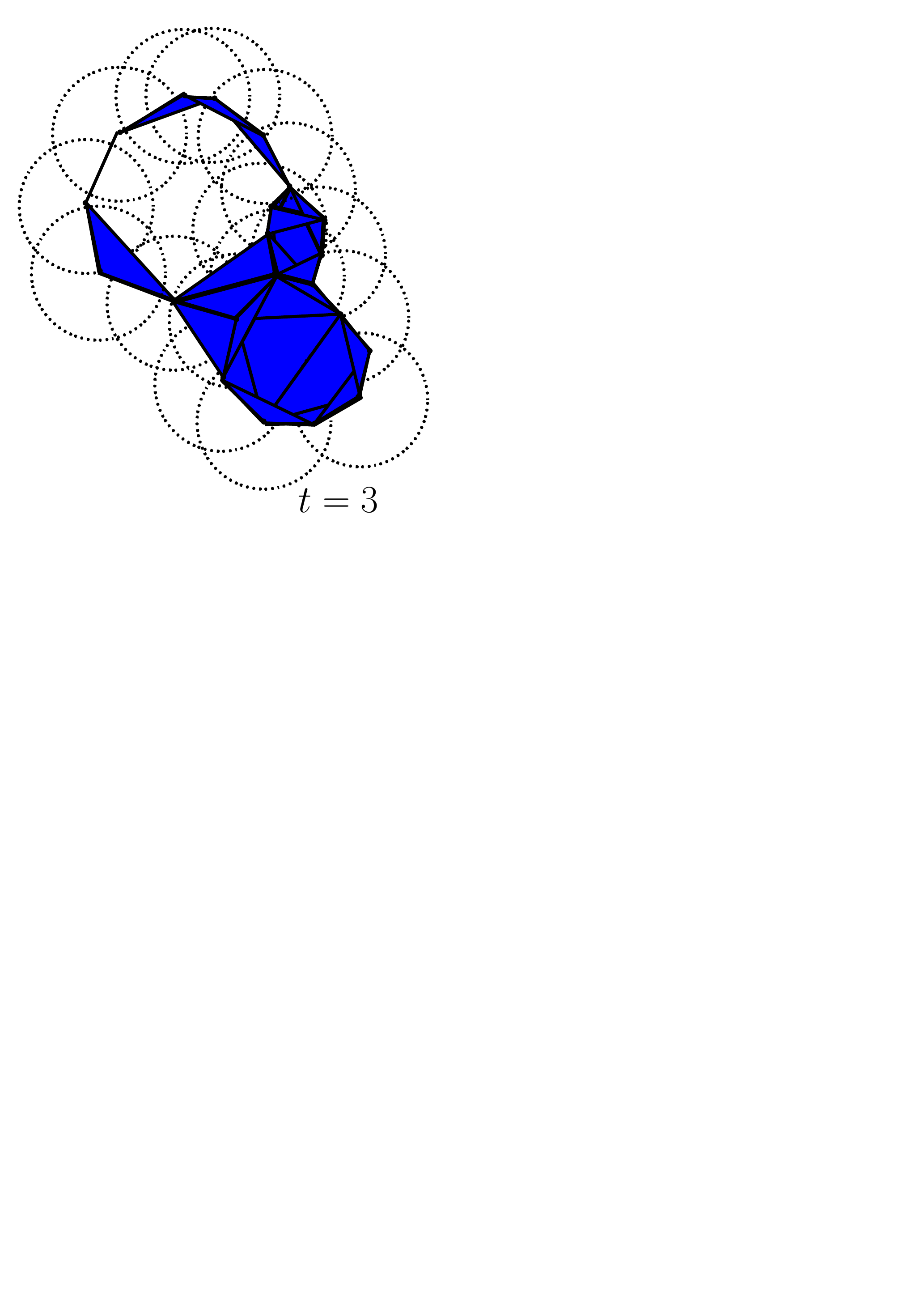}
	\includegraphics[width=0.12\textwidth]{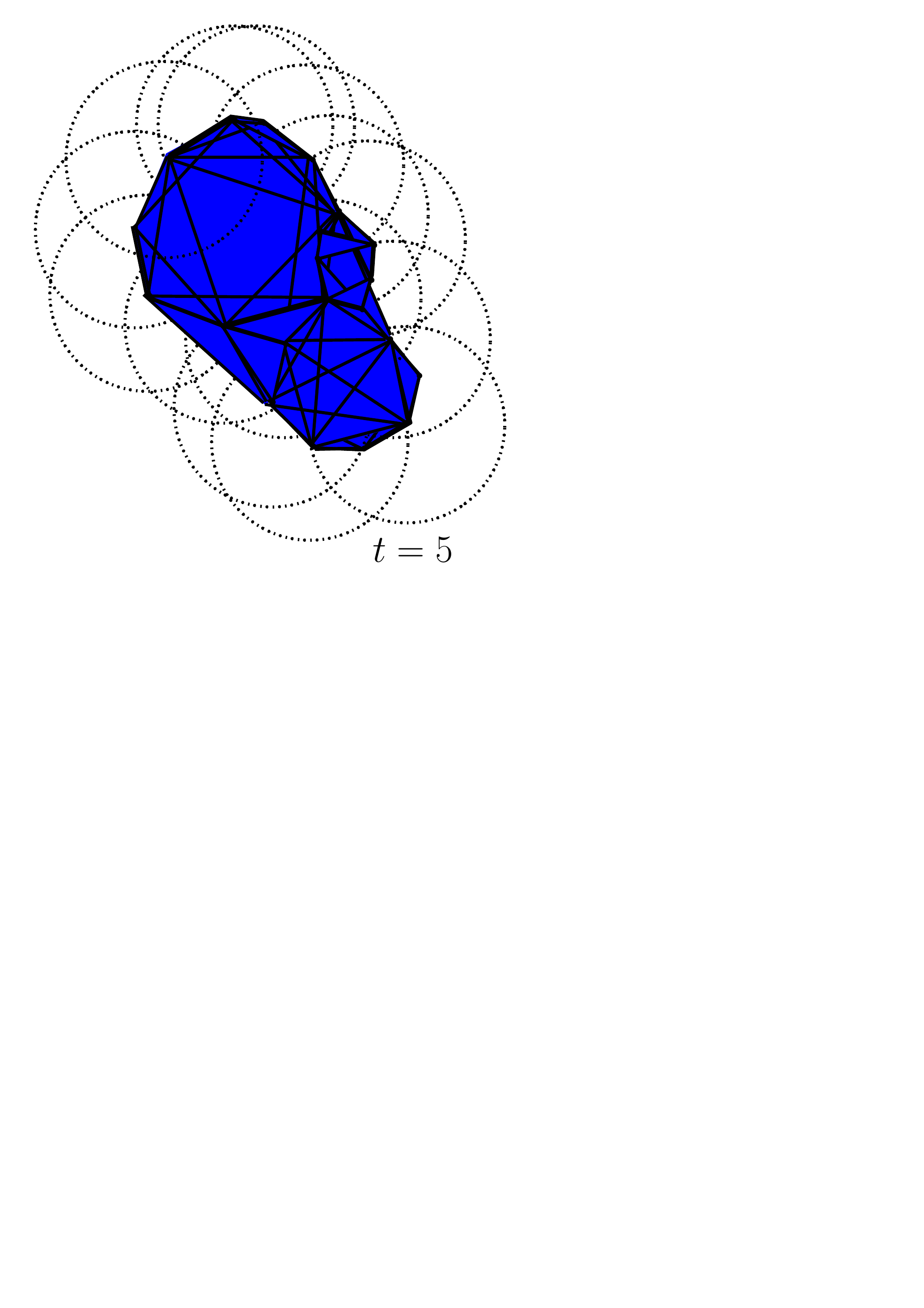}
	\includegraphics[width=0.3\textwidth]{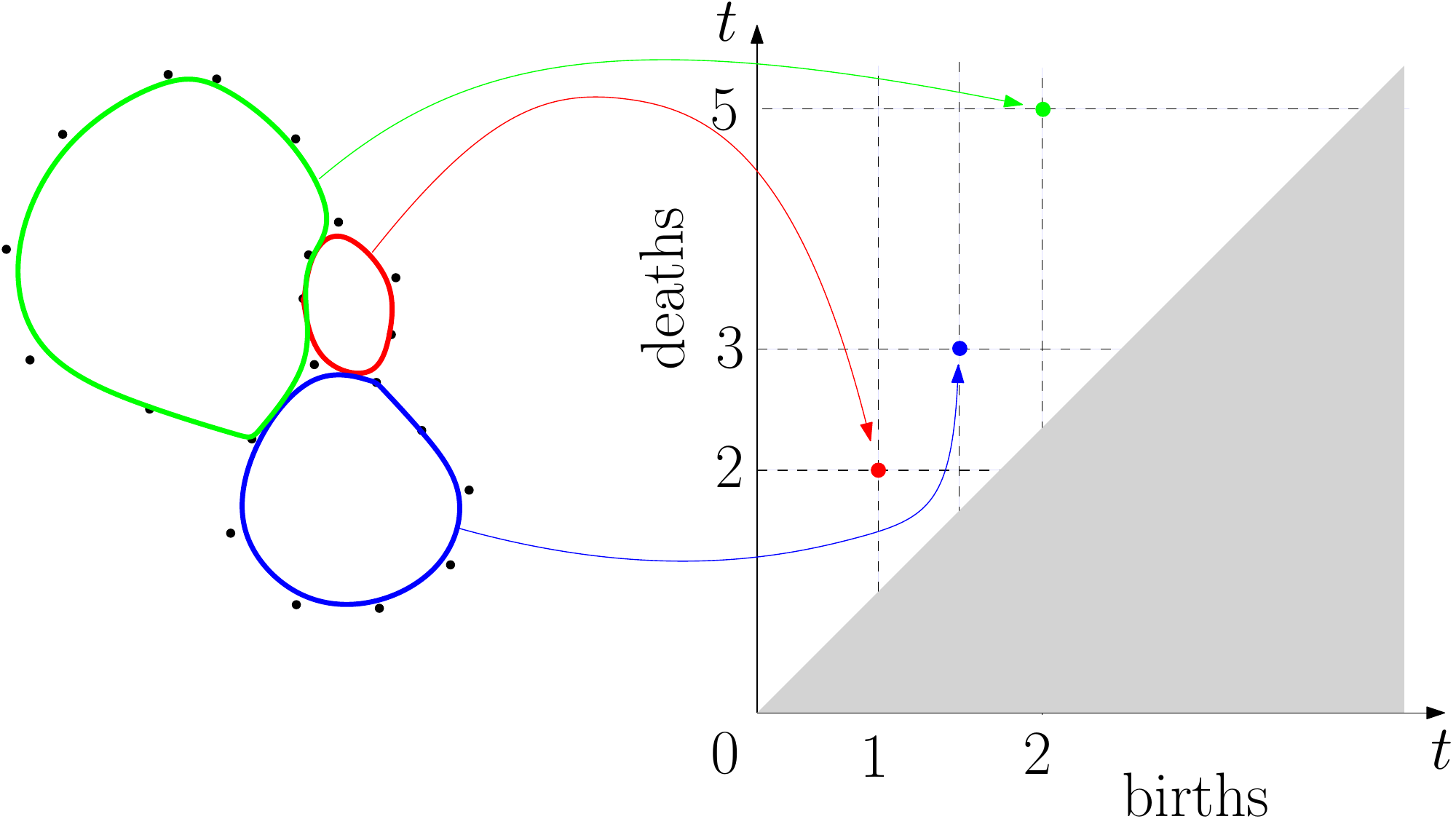}
	\vspace{-.2cm}
	\caption{\Cech{} filtration on a 2D point cloud in dimension $D=1$ (recording loops) and the corresponding PD.}
	\vspace{-.5cm}
	\label{fig:Cech_illu}
\end{figure*}

\textbf{Contributions.} We consider the situation where one has access to a $n$-sample of PDs $\mu_1,\dots,\mu_n$ following some (unknown) law $P$. 
A natural way to estimate the EPD of $P$ is to consider its empirical counterpart, which simply reads $\overline{\mu}_{n} \defeq \frac{1}{n}(\mu_1+\dots + \mu_n)$. 
By leveraging techniques from optimal transport theory, we show in \cref{sec:minimax_EPD} that $\overline{\mu}_n$ approximates $\EPD$ at the parametric rate $n^{-1/2}$ with respect to the loss $\Dp^p$ under non-restrictive assumptions, and that it is optimal from a minimax perspective.  
In practice, the support of the measure $\overline{\mu}_{n}$ is obtained as the union of the support of each diagram and tends to be very large if $n \gg 1$, hindering the use of this empirical descriptor in applications. 
To overcome this issue, we propose in \cref{sec:quantiz} an online algorithm to compute a quantization of the empirical EPD and show that---provided a good initialization---the output of our algorithm approximates a quantization of the EPD at an appropriate rate.
For the sake of conciseness, proofs have been deferred to the supplementary material along with code to reproduce our experiments. 

\textbf{Related Work.} \citet{tda:divol2018density} show that under mild assumptions the EPD is a measure with density supported on the half-plane $\upperdiag$, and propose an estimation procedure of the EPD based on kernel density estimation. 
However, they defined convergence in terms of $L_2$ metrics between densities instead of the more natural diagram metric $\Dp$ considered in this work and did not exhibit rates of convergence. 
In optimal transport literature, the study of convergence rates between a measure and its empirical counterpart for the Wasserstein distance $W_p$ dates back to \citep{dudley1969speed}, while more recent papers \citep{ot:singh2018minimax, fournier2015rate, kloeckner2020empirical, lei2020convergence} provide tight controls of the convergence rate of the quantity $W_p^p$. 
There are however two main differences between this line of results and our framework. First, despite both being optimal transport metrics, there exist key differences between the metric $\Dp$ and the Wasserstein metric $W_p$ (see \cref{sec:background}). 
Furthermore, we are not in the common situation where one observes i.i.d.~realizations $X_1,\dots,X_n$ in $\Omega$ and considers the empirical measure $\frac{1}{n}(\delta_{X_1}+\dots+ \delta_{X_n})$ but in the more general setting where one observes measures $\mu_1, \dots, \mu_n$ on $\Omega$ following some law $P$ and considers the distance between the expected measure $\EPD$ and its empirical counterpart $\frac{1}{n} (\mu_1+ \dots + \mu_n)$.

The problem of quantization of measures, namely approximating a given measure with another measure with support of fixed size, has been studied in depth when those measures are supported on $\R^d$ equipped with its natural Euclidean geometry, see for instance \citep{graf2007foundations,fischer2010quantization,levrard2015nonasymptotic,ot:bourne2018semi}. In the context of PDs, where the quantization problem is generally referred to as computing \emph{codebooks} or \emph{bag-of-words} \citep{tda:zielinski2018persistenceBow,tda:zielinski2020persistenceCodebook}, existing methods propose to quantize PDs running a $k$-mean algorithm on the diagram points. 
The intuition that points in a diagram that are close to the boundary $\thediag$ of the half-plane $\upperdiag$ represent less important topological features is taken into account through the introduction of weight functions, requiring to introduce an important hyper-parameter whose choice is unclear in general. 
Our approach differs from the latter on two aspects: first, we do not quantize a single diagram (should it be a superposition of diagrams as in \citep{tda:zielinski2020persistenceCodebook}) but work in an \emph{online} fashion with a sequence of observed diagrams. Second, we work with the standard diagram metric $\Dp$. 
In doing so, we directly take the boundary $\thediag$ into account in the formulation of our problem without needing to introduce a weight function. 
Our quantization algorithm significantly builds on \citep[Alg.~2]{chazal2020optimal}. 
The main difference is that \citeauthor{chazal2020optimal} intend to quantize a measure with respect to the $2$-Wasserstein distance on $\R^d$, while we work with the metric $\Dp$ on $\upperdiag \subset \R^2$. 
This change of perspective introduces some specificities in our problem and allows us to derive results more suited to the context of persistence diagrams. 
Furthermore, while standard algorithms work with $p=2$, we propose a simple variation to encompass the case $p=+\infty$, central in TDA as one retrieves the so-called \emph{bottleneck} distance. 

\vspace{-.2cm}
\section{Background}
\label{sec:background}
\vspace{-.1cm}
\textbf{Persistence diagrams (PDs).} 
Let $X$ be a topological space and let  $f : X \to \R$ be a real-valued continuous function. The \emph{sublevel sets} of $(X,f)$ are defined as $\FF_t \defeq \{ w \in X,\ f(w) < t\}$. 
As the scale parameter $t$ increases from $-\infty$ to $+\infty$, one observes a nested sequence of sets called the \emph{filtration} of $X$ by $f$. 
 Given a fixed dimension $D$, persistent homology (see \citep{tda:edelsbrunner2010computational} for an introduction) provides tools to record the scales at which a topological feature (a connected component for $D=0$, a loop for $D=1$, a cavity for $D=2$, etc.) appears or disappears in the sublevel sets. For instance, a loop (one-dimensional topological component) might appear at some scale $t_1$ (its birth time) in the sublevel set $\FF_{t_1}$, and disappear (``get filled'') at some scale $t_2 > t_1$. One says that the loop \emph{persists} over the interval $[t_1, t_2]$. This results in a collection of intervals\footnote{In the greatest generality, there may be some intervals of the form $[t_1,+\infty)$. In the following, such intervals are simply discarded if ever present.}---each of them accounting for the presence of a topological feature recorded in the filtration process---that can be encoded as a multiset supported on the open half-plane $\upperdiag = \{ x=(t_1, t_2),\ t_2 > t_1 \} \subset \R^2$, or, equivalently, as a locally finite discrete measure  $\dgm(f) \defeq \sum_{i} \delta_{x_i}$, where $\delta_{x_i}$ denotes the Dirac mass located at $x_i \in \upperdiag$. 
Of particular interest is the case where $X = \R^d$, and $f : w \in \R^d \mapsto \dist(w, A)$ is the distance function to $A$ a compact subset of $\R^d$ (for instance a point cloud), see Figure \ref{fig:Cech_illu}. 
The corresponding diagram, called the \Cech{} persistence diagram of $A$, will be denoted by $\dgm(A)$.

\textbf{Metrics for PDs.} Let $\| \cdot \|$ be the Euclidean norm and let $\supp(\mu)$ denote the support of a measure $\mu$. Let $\thediag \defeq \{(t,t),\ t \in \R \}$ be the diagonal (which is also the boundary of $\upperdiag$), and $\groundspace \defeq \upperdiag \sqcup \thediag$. Given $1 \leq p < \infty$, and two measures $\mu, \nu$ supported on $\upperdiag$, one can define the distance between $\mu$ and $\nu$ using an \emph{optimal partial transport} metric:
\begin{equation}\label{eq:Dp_def}
	\Dp(\mu,\nu) \defeq\hspace{-.1cm} \inf_{\pi \in \Adm(\mu,\nu)} \hspace{-.1cm}\left( \iint_{\groundspace \times \groundspace} \|x - y \|^p \dd \pi \right)^{\frac{1}{p}}\hspace{-.2cm},
\end{equation}
where $\Adm(\mu,\nu)$ is the set of measures supported on $\groundspace \times \groundspace$ whose first (resp.~second) marginal coincides with $\mu$ (resp.~$\nu$) on $\upperdiag$ (note in particular that $\pi$ is not constrained on $\thediag \times \thediag$). 
The definition is extended to $p=\infty$ by replacing $\left( \iint_{\groundspace \times \groundspace} \|x - y \|^p \dd \pi \right)^{\frac{1}{p}}$ by $\sup \{\|x-y\|,\ (x,y)\in \supp(\pi)\}$, and the distance $\mathrm{OT}_\infty$ is called the \emph{bottleneck distance}, central in TDA due to its strong stability properties \citep{tda:cohen2007stability,tda:chazal2016structure}. 
Let $\|x - \thediag\| = (t_2 - t_1)/\sqrt{2}$ be the \emph{persistence} of a point $x=(t_1,t_2)\in\upperdiag$, that is its distance to the diagonal $\thediag$. 
The space $(\MM^p, \Dp)$ of \emph{persistence measures} is defined as the space of (non-negative) Radon measures $\mu$ supported on $\upperdiag$ that have finite \emph{total persistence}, i.e.~$\Pers_p(\mu)\defeq \int \|x-\thediag\|^p \dd \mu(x) < \infty$ (this condition ensures that $\Dp$ is always finite). 
Note that the distance $\Dp$ is not only defined for PDs (elements of $\DD$), but for measures on $\upperdiag$ with arbitrary support, therefore making it possible to define a similarity notion between a PD and a more general measure such as an EPD, a crucial aspect of this work.
\vspace{-.3cm}

The metrics $\Dp$ are similar to the \emph{Wasserstein distances} used in optimal transport \citep[Ch.~5]{otam}: for $\sigma,\tau$ two measures \emph{having the same total mass} on a metric space $(S,\rho)$, the distance $W_{p,\rho}(\sigma,\tau)$ is defined as the infimum of $\left(\int_{S^2}\rho(x,y)^p\dd\pi(x,y)\right)^{1/p}$ over all transport plans $\pi$ between $\sigma$ and $\tau$, i.e.~measures on $S \times S$ which have for first (resp.~second) marginal $\sigma$ (resp.~$\tau$). When $\rho$ is the Euclidean distance we write $W_p$ instead of $W_{p,\rho}$. Despite those similarities, there is however a crucial difference between the Wasserstein distance and the $\Dp$ distance: the constraints in \eqref{eq:Dp_def} only involves the marginals on $\upperdiag$, allowing us to transport mass to and from the boundary of the space $\thediag$. 
It makes, in particular, the distance $\Dp$ between measures of different total masses well-defined.
The metrics $\Dp$ were introduced by \citet{ot:figalli2010newTransportationDistance} as a way to study the heat equation with Dirichlet boundary conditions, but \citet{tda:divol2019understanding} observed that these metrics actually coincide with the standard metrics used to compare persistence diagrams \citep[Ch.~8]{tda:edelsbrunner2010computational}.

\vspace{-.1cm}
\textbf{Expected persistence diagrams.} 
Let $P$ be a probability distribution supported on $(\MM^p, \OT_p)$. Let $\EPD$ be the measure defined by, for $A \subset \upperdiag$ compact, 
\begin{equation}
 	\EPD (A) \defeq \E_P[ \mu(A) ],
 \end{equation}
where $\mu \sim P$, and $\mu(A)$ is the (random) number of points of $\mu$ that belongs to $A$. 
This deterministic measure, called the \emph{expected persistence diagram} (EPD) of $P$, was introduced in \citep{tda:divol2018density} were authors proved that, under mild assumptions, it admits a density with respect to the Lebesgue measure on $\upperdiag$. 
Importantly, the EPD is a persistence measure but not a PD in general.

\vspace{-.2cm}

\section{Minimax estimation of the EPD}\label{sec:minimax_EPD}
\vspace{-.1cm}
Let $P$ be a distribution of PDs, and $\EPD$ be its EPD. Given a $n$-sample $\mu_1,\dots,\mu_n$ of law $P$, the empirical EPD is defined as $\overline{\mu}_n \defeq \frac{1}{n} \sum_i \mu_i$. In this section, we control the distance $\Dp^p(\overline\mu_n,\EPD)$ under moment assumptions on the underlying law $P$. 
Note that, according to \citep[Thm.~ 3.7]{tda:divol2019understanding} and the law of large numbers, $\overline\mu_n \xrightarrow{\Dp} \EPD$ almost surely under the minimal assumption that $\E_{P}[\Pers_p(\mu)]<\infty$ (see \cref{lem:as_convergence} in the supplementary material). 
Our goal here is to understand the rate at which this convergence holds.

Let $A_L$ be the $\ell_1$-ball in $\R^2$ centered at $(-L/\sqrt{8},L/\sqrt{8})$ of radius $L/\sqrt{2}$. For $0\leq q \leq \infty$ and $L,M>0$, we let $\MM^q_{L,M}$ be the set of measures $\mu \in\MM^q$ which are supported on $A_L$, with $\Pers_q(\mu)\leq M$. Let $\PP^q_{L,M}$ be the set of probability distributions which are supported on $\MM^q_{L,M}$. 
It is known that persistence diagrams belong to the set $\MM^q_{L,M}$ under non-restrictive assumptions. Namely, we have the following result.

\begin{lemma}[\citet{tda:cohen2010lipschitzStablePersistence}]\label{lem:cohensteiner}
Let $X$ be a $d$-dimensional compact Riemannian manifold, and let $f:X\to \R$ be a Lipschitz continuous function. Then, for every $q>d$, $\dgm(f)$ belongs to $\MM^q_{L,M}$ for some $L$, $M$ depending on $X$, $q$ and the Lipschitz constant of $f$.
\end{lemma}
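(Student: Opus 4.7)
The plan is to address the two conditions defining $\MM^q_{L,M}$ separately: containment of $\supp(\dgm(f))$ in the triangular region $A_L$, and a uniform bound on the total $q$-persistence $\Pers_q(\dgm(f))$.

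\textbf{Controlling the support.} The first part is elementary. Since $X$ is compact and $f$ is Lipschitz, $f(X)\subset[m,M]$ with $M-m\leq \Lip(f)\cdot\diam(X)$. Every persistence pair $(t_1,t_2)$ produced by the sublevel set filtration satisfies $m\leq t_1<t_2\leq M$, so $\dgm(f)$ is supported in the closed triangle $\{(t_1,t_2):m\leq t_1\leq t_2\leq M\}$. An explicit computation shows that this triangle is contained in $A_L$ as soon as $L$ is a suitable multiple of $\Lip(f)\cdot\diam(X)$ (depending only on the geometry, not on $f$).

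\textbf{Controlling the total persistence.} This is the genuine content of the lemma. The strategy is to estimate, for each $\epsilon>0$, the number $N(\epsilon)$ of points of $\dgm(f)$ at distance at least $\epsilon$ from $\thediag$, and then integrate. I would prove the covering-type bound
\begin{equation}
N(\epsilon)\ \leq\ C_X\,\bigl(\Lip(f)/\epsilon\bigr)^{d},
\end{equation}
valid for a constant $C_X$ depending only on $X$. The idea is to replace $f$ by a piecewise-constant approximation $\tilde f$ built on a minimal $\eta$-net of $X$ (with $\eta\defeq \epsilon/(2\Lip(f))$); Ahlfors regularity of compact $d$-dimensional Riemannian manifolds gives $|\text{net}|\leq C_X\eta^{-d}$. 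By the bottleneck stability theorem, $\Dinf(\dgm(f),\dgm(\tilde f))\leq \|f-\tilde f\|_\infty\leq \Lip(f)\eta=\epsilon/2$, so every point of $\dgm(f)$ with persistence $\geq\epsilon$ must be matched to a point of $\dgm(\tilde f)$ of persistence $\geq\epsilon/2$; but $\tilde f$ takes at most $|\text{net}|$ distinct values, which bounds its total number of off-diagonal points, and hence bounds $N(\epsilon)$.

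\textbf{Conclusion and summability.} Writing the total persistence as a Stieltjes integral and using the support bound from the first step,
\begin{equation}
\Pers_q(\dgm(f))\ =\ \int_0^{L} q\,\epsilon^{q-1}\,N(\epsilon)\,\dd\epsilon \ \leq\ C_X \Lip(f)^d \int_0^{L} q\,\epsilon^{q-1-d}\,\dd\epsilon,
\end{equation}
which is finite precisely when $q>d$, and evaluates to a constant $M=M(X,q,\Lip(f))$. Combined with the support bound, this shows $\dgm(f)\in\MM^q_{L,M}$. The main obstacle is the covering estimate on $N(\epsilon)$; everything else is bookkeeping. Once this estimate is established, the threshold $q>d$ appears naturally as the condition making $\int_0 \epsilon^{q-1-d}\,\dd\epsilon$ integrable at zero.
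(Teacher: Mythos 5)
The paper does not prove this lemma; it states it with a citation to Cohen-Steiner, Edelsbrunner, Harer and Mileyko (2010), so there is no ``paper proof'' to compare against. Your reconstruction does follow the standard argument from that reference: bound the support via Lipschitzness of $f$ and compactness of $X$, bound the counting function $N(\epsilon)$ by a covering number scaling as $(\Lip(f)/\epsilon)^d$, and integrate $\Pers_q = q\int_0^L \epsilon^{q-1}N(\epsilon)\,\dd\epsilon$ to see that $q>d$ is exactly the summability threshold. The structure is right and the key estimate is correctly identified.

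There is, however, a genuine flaw in the justification of the covering bound. You write that ``$\tilde f$ takes at most $|\text{net}|$ distinct values, which bounds its total number of off-diagonal points.'' This implication is false: the number of distinct values of a function does \emph{not} bound the number of off-diagonal points of its persistence diagram. For instance, a function on $S^1$ alternating between the values $0$ and $1$ over $2n$ arcs takes only two distinct values, yet its degree-$0$ diagram has $n-1$ off-diagonal points, all sitting at $(0,1)$. The correct reasoning, and the one used by Cohen-Steiner et al., is to take a geodesic triangulation of $X$ of mesh $\eta$ (or the nerve/Delaunay complex of the $\eta$-net) and let $\tilde f$ be the piecewise-linear interpolation of $f$ on it; the number of off-diagonal pairs of $\dgm(\tilde f)$ is then bounded by the number of simplices, which for a compact $d$-manifold with bounded geometry is $O(\eta^{-d})$ (each vertex having bounded degree). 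This gives the same $N(\epsilon)\lesssim (\Lip(f)/\epsilon)^d$ estimate you want, so your overall plan is salvageable, but as written the step does not hold. A further minor point: bottleneck stability requires some tameness hypothesis (or has to be invoked at the level of $q$-tame persistence modules) since a general Lipschitz $f$ need not have finitely many critical values; this is handled in the cited reference and should be acknowledged.
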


In particular, for $q>0$, no constraints on the total number of points of the persistence diagram are imposed. This is particularly interesting in applications, where the number of points in PDs is likely to be large, while their total persistence $\Pers_q$ may be moderate, see e.g.~\citep{tda:divolpolonik} for asymptotics in the case of the \v Cech persistence diagrams of large samples on the cube.

\begin{theorem}\label{thm:minimax_result}
Let $1\leq p<\infty$ and $0\leq q < p$. Let $P\in \PP^q_{L,M}$ and let $\mu_1,\dots,\mu_n$ be a $n$-sample from law $P$. If $\overline\mu_n$ is the associated empirical EPD, then,
\begin{equation}
\E[\Dp^p(\overline{\mu}_{ n},\EPD)] \leq c ML^{p-q} \hspace{-.05cm} \p{\frac{1}{ n^{1/2} }              \hspace{-.05cm}      +\hspace{-.05cm} \frac{a_p(n)}{n^{p-q}}}\hspace{-.05cm},
\end{equation}
where $c$ depends on $p$ and $q$, and $a_p(n)=1$ if $p>1$, $\log(n)$ if $p=1$.
\end{theorem}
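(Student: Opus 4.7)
My plan is to adapt the classical multi-scale dyadic approach used to control rates of empirical convergence in Wasserstein distance \citep{fournier2015rate}, exploiting two specific features of our setting: (i) in $\Dp$ unmatched mass can be discarded for free on the diagonal $\thediag$, and (ii) the moment constraint $\Pers_q(\mu)\leq M$ controls, deterministically, how concentrated a random diagram can be at a given distance from the diagonal.

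First I would establish a deterministic multi-scale upper bound for $\Dp^p$. Fixing a nested sequence of dyadic partitions $\{Q_{k,j}\}_j$ of $A_L$ at scales $L 2^{-k}$, I would construct an explicit partial-transport plan that, scale by scale, matches as much common mass as possible within each cube, propagates the leftover to the coarser scale, and finally sends whatever remains to $\thediag$. The free absorption at $\thediag$ is what makes this construction work for persistence measures of \emph{different} total masses and yields a bound of the schematic form
$$\Dp^p(\mu,\nu)\ \leq\ C_p \sum_{k\geq 0} (L 2^{-k})^p \sum_j \bigl|\mu(Q_{k,j}) - \nu(Q_{k,j})\bigr|.$$

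Taking expectations with $\mu=\overline{\mu}_n$ and $\nu=\EPD$, I would control each term via two complementary estimates. Cauchy--Schwarz gives $\E[|\overline{\mu}_n(Q)-\EPD(Q)|]\leq \sqrt{\E[\mu_1(Q)^2]/n}$, while the moment constraint forces the pointwise inequality $\mu_1(Q)\leq M\,d(Q,\thediag)^{-q}$, hence $\E[\mu_1(Q)^2]\leq M\,d(Q,\thediag)^{-q}\cdot \EPD(Q)$ and also a uniform deterministic bound $|\overline{\mu}_n(Q)-\EPD(Q)|\leq M\,d(Q,\thediag)^{-q}$. Summing the dyadic bound via Cauchy--Schwarz across cubes at a given scale, and using $\EPD(A_L)\leq M L^{-q}$, the concentration term contributes $\lesssim M L^{p-q}n^{-1/2}$, whereas switching to the deterministic estimate at scales where $n$ is too small to concentrate produces the residual $M L^{p-q}a_p(n)/n^{p-q}$. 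The logarithmic factor at $p=1$ comes from the geometric series in $k$ degenerating into a harmonic sum at the critical exponent.

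The main obstacle is the dyadic construction: one must assemble cube-local matchings into a globally admissible partial transport plan with the right constant, and in particular justify the absence of a prefactor depending on the (unbounded) total number of diagram points. A secondary delicate point is the choice of the cut-off scale $k^\star(n)$ separating the concentration and deterministic regimes, tuned so that the two contributions balance and one recovers the final dependence $M L^{p-q}\bigl(n^{-1/2}+a_p(n)n^{-(p-q)}\bigr)$, with the condition $q<p$ being essential for the convergence of the series over scales.
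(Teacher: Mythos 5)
Your overall strategy is the right one, and several of your intermediate observations are exactly the ones the paper uses: the dyadic multi-scale comparison, the free absorption on $\thediag$, the Cauchy--Schwarz bound $\E|\overline\mu_n(Q)-\EPD(Q)|\leq\sqrt{\E[\mu_1(Q)^2]/n}$, the deterministic bound $\mu_1(Q)\leq M\,d(Q,\thediag)^{-q}$, and the role of $q<p$ in making the sum over scales converge. However, there is a genuine gap in the middle of the argument: you invoke $\EPD(A_L)\leq ML^{-q}$, and this is false whenever $q>0$. The constraint $\Pers_q(\mu)\leq M$ only bounds $\mu(B)$ for sets $B$ at positive distance from $\thediag$ (giving $\mu(B)\leq M\,d(B,\thediag)^{-q}$ as in \eqref{eq:mass_far_from_diag}); it places no bound at all on the mass sitting arbitrarily close to the diagonal, so $\mu(A_L)$ and hence $\EPD(A_L)$ can be infinite. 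Since this bound is what you use to close the Cauchy--Schwarz step across all cubes at a given scale, the concentration term is not in fact controlled by $ML^{p-q}n^{-1/2}$ in your scheme.

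The structural reason your decomposition runs into this is that you use a single index $k$ that simultaneously plays the role of ``resolution'' and ``distance to the diagonal.'' The paper decouples the two: it first partitions $A_L$ into dyadic bands $B_k=\{x\in A_L:\|x-\thediag\|\in(L2^{-(k+1)},L2^{-k}]\}$, each at controlled distance from $\thediag$ and therefore with $\mu(B_k)\leq ML^{-q}2^{kq}$ uniformly over $\MM^q_{L,M}$, and only then refines each band by a nested sequence of partitions indexed by a second parameter $j$ up to a cutoff $J$ (see \cref{lem:deterministic_control}). This two-index structure is what makes the Cauchy--Schwarz sums finite and is also what produces, cleanly, both the parametric $n^{-1/2}$ term (from $j$ small) and the residual $a_p(n)n^{-(p-q)}$ term (from the trade-off between the deterministic mass bound on $B_k$ and concentration, with the $\log n$ at $p=1$ coming from the choice of $J$). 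A single global dyadic partition of $A_L$ could in principle be made to work, but you would have to re-introduce the banding implicitly by splitting the cubes at each scale according to their distance from $\thediag$ and bounding the sum row by row; the shortcut via $\EPD(A_L)\leq ML^{-q}$ does not do this and is invalid for $q>0$. A secondary, related point: your schematic deterministic bound $\Dp^p(\mu,\nu)\leq C_p\sum_k(L2^{-k})^p\sum_j|\mu(Q_{k,j})-\nu(Q_{k,j})|$ has no resolution cutoff, so the right-hand side may itself be infinite for persistence measures of infinite total mass; the paper's \cref{lem:deterministic_control} keeps a $2^{-Jp}(\mu(B_k)\wedge\nu(B_k))$ remainder term for exactly this reason.
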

In particular, if $p\geq q+1/2$, we obtain a parametric rate of convergence of $n^{-1/2}$. This is always the case if $q=0$, i.e.~if we assume that all the diagrams sampled according to $P$ have less than $M$ points. According to Lemma \ref{lem:cohensteiner}, it is also the case if $\mu_i=\dgm(f_i)$ for some random $1$-Lipschitz functions $f_i:X\to \R$, where $X$ is a $d$-dimensional compact Riemannian manifold with $p>d+1/2$.

From a statistical perspective, it is natural to wonder if better estimates of $\EPD$ exist. 
 A possible way to answer this question is given by the minimax framework. Let $\PP$ be a set of probability distributions on $\MM^p$. The minimax rate for the estimation of $\EPD$ on $\PP$ is
  \vspace{-.15cm}
\begin{equation}
\mathcal{R}_n(\PP)\defeq \inf_{\hat \mu_n}\sup_{P\in \PP} \E[\Dp^p(\hat\mu_{ n},\EPD)], 
 \vspace{-.15cm}
\end{equation}
where the infimum is taken over all possible estimators of $\EPD$. An estimator attaining the rate $\mathcal{R}_n(\PP)$ (up to a constant) is called minimax, i.e.~an estimator is minimax on the class $\PP$ if it has the best possible risk uniformly on this class. We show that the empirical EPD $\overline \mu_n$ is a minimax estimator on $\PP^q_{L,M}$ as long as $p \geq q + 1/2$. The case $p=\infty$ is discussed in  \cref{rem:rip_estimation} (supplementary material).

\begin{theorem}\label{thm:minimax}
 Let $1\leq p <\infty$ and $q\geq 0$, $L,M>0$. One has, for some $c$ depending on $p$ and $q$,
   \vspace{-.1cm}
\begin{equation}
\mathcal{R}_n(\PP^q_{L,M}) \geq c ML^{p-q}n^{-1/2}.
 \vspace{-.15cm}
\end{equation}
\end{theorem}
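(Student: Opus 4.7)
My plan is to apply Le Cam's two-point method: I will construct two distributions $P_0, P_\lambda \in \PP^q_{L,M}$ whose EPDs are $\Dp^p$-separated by order $M L^{p-q} n^{-1/2}$, while the $n$-fold products remain statistically indistinguishable (total variation bounded away from one). The classical reduction from estimation to testing then delivers the claimed lower bound on $\mathcal{R}_n(\PP^q_{L,M})$.

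First I would pick $x^\star \defeq (-L/\sqrt{8},\, L/\sqrt{8} + L/\sqrt{2})$, which is readily checked to lie on the boundary of $A_L$ and to satisfy $\|x^\star - \thediag\| = L$. Setting $N \defeq M/L^q$, the atomic measure $N \delta_{x^\star}$ belongs to $\MM^q_{L,M}$ since it is supported on $\{x^\star\}\subset A_L$ and $\Pers_q(N\delta_{x^\star}) = N L^q = M$. For a parameter $\lambda \in [0, 1/4]$ to be tuned, I then consider the two distributions on $\MM^q_{L,M}$
\begin{equation*}
P_0 \defeq \tfrac{1}{2}\delta_{\{0\}} + \tfrac{1}{2}\delta_{\{N\delta_{x^\star}\}}, \quad P_\lambda \defeq \bigl(\tfrac{1}{2}-\lambda\bigr)\,\delta_{\{0\}} + \bigl(\tfrac{1}{2}+\lambda\bigr)\,\delta_{\{N\delta_{x^\star}\}},
\end{equation*}
both of which lie in $\PP^q_{L,M}$. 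Their EPDs equal $(N/2)\delta_{x^\star}$ and $(N/2 + \lambda N)\delta_{x^\star}$, and since the cheapest way to transport the extra mass $\lambda N$ at $x^\star$ is to send it to the diagonal,
\begin{equation*}
\Dp^p\bigl(\mathbf{E}(P_0),\, \mathbf{E}(P_\lambda)\bigr) = \lambda N \,\|x^\star - \thediag\|^p = \lambda\, M L^{p-q}.
\end{equation*}

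For the testing step, a direct computation gives $\chi^2(P_\lambda \,\|\, P_0) = 4\lambda^2$, hence $\KL(P_\lambda \,\|\, P_0) \leq 4\lambda^2$; tensorization and Pinsker's inequality yield $\mathrm{TV}(P_\lambda^{\otimes n}, P_0^{\otimes n}) \leq \sqrt{2n}\,\lambda$. Choosing $\lambda \defeq c_0\, n^{-1/2}$ with $c_0$ small enough makes this TV bounded by $1/2$. Then for any estimator $\hat\mu_n$, setting $D \defeq \Dp(\mathbf{E}(P_0), \mathbf{E}(P_\lambda))$, the triangle inequality for $\Dp$ makes the events $\{\Dp(\hat\mu_n, \mathbf{E}(P_i)) < D/2\}$ ($i \in \{0,\lambda\}$) mutually disjoint, so that the indicator of $\{\Dp(\hat\mu_n, \mathbf{E}(P_0)) \geq D/2\}$ is a valid test between $P_0^{\otimes n}$ and $P_\lambda^{\otimes n}$. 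Le Cam's inequality then yields $\max_i \prob_{P_i^{\otimes n}}\bigl(\Dp(\hat\mu_n, \mathbf{E}(P_i)) \geq D/2\bigr) \geq \tfrac{1}{2}(1 - \mathrm{TV}) \geq 1/4$, and Markov's inequality upgrades this to $\max_i \E\bigl[\Dp^p(\hat\mu_n, \mathbf{E}(P_i))\bigr] \geq (D/2)^p / 4 = \lambda\, M L^{p-q} / 2^{p+2}$, which is the announced rate $c\, M L^{p-q} n^{-1/2}$ with $c = c_0/2^{p+2}$.

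There is no conceptually hard step here: the construction is a textbook two-point mixture, and the factor $M L^{p-q}$ arises transparently by placing $N = M/L^q$ units of mass at a point of persistence exactly $L$, thereby saturating both the $A_L$ and the $\Pers_q \leq M$ constraints. The main place requiring care is bookkeeping the constant through the Markov/Le Cam chain and verifying the geometric fact that $x^\star \in A_L$ realizes the maximal persistence $L$; beyond this the argument is routine.
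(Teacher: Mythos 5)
Your proof is correct, but it takes a genuinely different route from the paper's. The paper derives \cref{thm:minimax} as an immediate consequence of the stronger \cref{thm:regularityEstimation}, which adds Besov-smoothness constraints on the EPD: using $\PP^{q,s}_{L,M,T}\subset\PP^q_{L,M}$ and then proving \cref{thm:regularityEstimation} by (i) invoking the isometry $\Dp(\mu,\nu)=W_{p,\rho}(\Phi(\mu),\Phi(\nu))$ of \citep[Prop.~3.15]{tda:divol2019understanding} to pass to a genuine Wasserstein problem on a box $U_L\subset A_L$ far from $\thediag$ where $\rho$ is Euclidean, (ii) embedding the model of one-atom distributions $\Phi(\mu)=M\delta_x$, $x\sim\tau$, and (iii) citing the known $W_p$ minimax lower bound of \citep[Thm.~5]{ot:weed2019estimation} for smooth densities. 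Your argument is instead a self-contained Le Cam two-point bound: you place $N=M/L^q$ units of mass at a point $x^\star\in A_L$ of persistence exactly $L$, tilt the mixture weight by $\lambda\asymp n^{-1/2}$, compute $\Dp^p$ exactly (the surplus mass is sent to $\thediag$, costing $\lambda N L^p=\lambda M L^{p-q}$), bound $\chi^2$, tensorize, and apply Pinsker/Markov. The trade-off is clear: your proof is more elementary and does not rely on any external statistical-OT result, which is an advantage for \cref{thm:minimax} alone; but because your two hypotheses have \emph{atomic} EPDs, the same construction cannot establish \cref{thm:regularityEstimation} (whose whole point is that the lower bound persists even when the EPD is required to be smooth), which is precisely what the paper's detour through Besov classes and \citep{ot:weed2019estimation} buys. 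A small bookkeeping note: with your normalizations one should take $c_0\leq 1/4$ (so that $\lambda\leq 1/4$ for all $n\geq 1$) as well as $\sqrt{2}\,c_0\leq 1/2$; e.g.~$c_0=1/4$ works, but this is immaterial.
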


As the EPD $\EPD$ is known to have a smooth density in a wide variety of settings \citep{tda:divol2018density}, it could be expected (likewise it is the case in density estimation \citep{tsybakov}), that one could make use of this regularity to obtain substantially faster minimax rates on appropriate models. Surprisingly enough, using results from statistical optimal transport theory, we show that whatever regularity is assumed on the EPD, no estimators can perform better than the empirical EPD $\overline\mu_n$ for the $\Dp$ loss (from a minimax perspective). Let $B^s_{p',q'}$ be the set of functions $\Omega\to \R$ in the Besov space of parameters $s\geq 0$ and $1\leq p',q'\leq \infty$, see \citep{hardle2012wavelets} for an introduction to Besov spaces; note that this formalism encompasses all $\CC^k$ classes. Consider the model $\PP^{q,s}_{L,M,T}$ of probability distributions $P\in \PP^q_{L,M}$ whose EPD $\EPD$ belongs to $B^s_{p',q'}$ with associated norm smaller than $T/M$.

\begin{theorem}\label{thm:regularityEstimation}
 Let $1\leq p <\infty$, $q,s\geq 0$, $L,M,T>0$ and $1\leq p',q'\leq \infty$. One has
\begin{equation}
\mathcal{R}_n(\PP^{q,s}_{L,M,T}) \geq c ML^{p-q}n^{-1/2},
\end{equation}
where $c$ depends on $s, p', q', p,q$ and $T$.
\end{theorem}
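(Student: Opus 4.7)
The plan is to prove this lower bound via Le Cam's two-point method, mirroring the proof of Theorem~\ref{thm:minimax} but with the added requirement that the two hard-to-distinguish laws lie in the Besov-constrained class. It suffices to exhibit $P_0, P_1 \in \PP^{q,s}_{L,M,T}$ satisfying $\mathrm{TV}(P_0^{\otimes n}, P_1^{\otimes n}) \leq 1/2$ and $\Dp^p(\mathbf{E}(P_0), \mathbf{E}(P_1)) \gtrsim ML^{p-q}n^{-1/2}$; Le Cam's inequality then yields $\mathcal{R}_n(\PP^{q,s}_{L,M,T}) \geq \tfrac14 \Dp^p(\mathbf{E}(P_0), \mathbf{E}(P_1))$.

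The construction goes as follows. Fix two smooth probability densities $\rho_A, \rho_B$ on $\upperdiag$ with disjoint compact supports inside $A_L$, mutually at distance $\asymp L$ and both at distance $\asymp L$ from the diagonal $\thediag$. Let $\nu_A, \nu_B$ be the laws of the random diagrams obtained by sampling $N \asymp M/L^q$ i.i.d.\ points from $\rho_A$ and $\rho_B$ respectively. For a small parameter $\epsilon > 0$ to be calibrated, consider the mixture hypotheses
\begin{align*}
P_0 &\defeq (\tfrac12 - \epsilon)\nu_A + (\tfrac12 + \epsilon)\nu_B, \\
P_1 &\defeq (\tfrac12 + \epsilon)\nu_A + (\tfrac12 - \epsilon)\nu_B.
\end{align*}
Then $\mathbf{E}(P_0) = (\tfrac12 - \epsilon)N\rho_A + (\tfrac12 + \epsilon)N\rho_B$ and similarly for $\mathbf{E}(P_1)$, both of which are smooth. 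Taking $\rho_A, \rho_B$ to be appropriate $L$-rescalings of fixed smooth templates, one can ensure $\|\mathbf{E}(P_i)\|_{B^s_{p',q'}} \leq T/M$, and the persistence constraint $\Pers_q(\mu) \leq N L^q \lesssim M$ holds almost surely since all sampled points lie in $A_L$.

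For the separation, $\mathbf{E}(P_1) - \mathbf{E}(P_0) = 2\epsilon N (\rho_A - \rho_B)$: any admissible transport plan must handle $2\epsilon N$ units of mass that cannot remain in place, and since the supports of $\rho_A, \rho_B$ are at distance $\asymp L$ both from each other \emph{and} from $\thediag$, each unit costs at least $L^p$ whether transported directly to the other support or via the diagonal. Hence $\Dp^p(\mathbf{E}(P_0), \mathbf{E}(P_1)) \gtrsim \epsilon N L^p \asymp \epsilon ML^{p-q}$. For the indistinguishability, the mutual singularity of $\nu_A, \nu_B$ (their realized diagrams lie in disjoint measurable subsets of diagram space) collapses the Hellinger distance of the mixtures to that of the Bernoulli mixing weights: $H^2(P_0, P_1) = 2(\sqrt{\tfrac12 + \epsilon} - \sqrt{\tfrac12 - \epsilon})^2 \asymp \epsilon^2$. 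Tensorization then gives $H^2(P_0^{\otimes n}, P_1^{\otimes n}) \leq n\epsilon^2$, so picking $\epsilon \asymp n^{-1/2}$ keeps $\mathrm{TV}(P_0^{\otimes n}, P_1^{\otimes n}) \leq 1/2$, and Le Cam concludes with $\mathcal{R}_n \gtrsim ML^{p-q}n^{-1/2}$.

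The main obstacle is the joint tuning of the templates $\rho_A, \rho_B$ and the integer $N$ so that the three model constraints—disjoint supports at mutual distance $\asymp L$ inside $A_L$, almost-sure persistence bound with $N \asymp M/L^q$ points, and Besov norm at most $T/M$—are all simultaneously satisfied. The Besov constraint is where the smoothness hypothesis enters explicitly, forcing the constants in the final bound to depend on $s, p', q', T$. A secondary subtlety is ruling out that transport to $\thediag$ short-circuits the direct bump-to-bump route; this is handled by placing both template supports deep inside $A_L$, so that the diagonal route also pays a cost of order $L^p$ per unit of mass. Conceptually, the argument shows that the smoothness of the EPD cannot be exploited to improve the rate, since the statistical difficulty arises from a random binary choice between two macroscopically distinct configurations, not from the fine structure of the EPD to be estimated.
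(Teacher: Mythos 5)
Your proof takes a genuinely different route from the paper's. The paper uses the isometry $\Dp = W_{p,\rho}\circ\Phi$ (from \citep[Prop.~3.15]{tda:divol2019understanding}) to map the problem into estimating a measure on the box $U_L$ with respect to $W_p$, after restricting to distributions of the form $\mu = M'\delta_x$ with $x\sim\tau$; it then directly invokes the known $n^{-1/2}$ minimax lower bound of \citet[Thm.~5]{ot:weed2019estimation} for Besov-smooth densities, and handles $q>0$ by the inclusion $\PP^{0,s}_{M',L}\subset\PP^{q,s}_{M,L,T}$ with $M'=ML^{-q}$. You instead build an explicit Le Cam two-point bound with mutually singular mixtures $\nu_A,\nu_B$, which has the virtue of being self-contained and of making the source of hardness transparent: a Bernoulli discrimination between two macroscopically distinct configurations, so that no smoothness of the EPD can help. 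Your remark that both the bump-to-bump route and the bump-to-diagonal route cost $\gtrsim L^p$ per unit mass is exactly the step that replaces the paper's observation that $\rho$ coincides with the Euclidean distance on $U_L$. The argument is sound, but two points deserve more care. First, the Besov compatibility is asserted, not checked: with $N\asymp M/L^q$ and $\rho_A,\rho_B$ $L$-rescalings of a fixed template, $\|\mathbf{E}(P_i)\|_{B^s_{p',q'}}$ scales like $N L^{-2-s+2/p'}$, so the constraint $\leq T/M$ imposes a relation between $L,M,T$ that is not automatic; this is the one genuinely quantitative step and should be written out (the paper offloads it to Weed--Berthet, whose construction builds this in). Second, your $N$ must be a positive integer, so the construction degenerates when $M/L^q<1$, whereas the paper's use of a single Dirac with \emph{continuous} weight $M'\delta_x$ — legitimate since the model is over general persistence measures in $\MM^q_{L,M}$, not just integer-weighted PDs — avoids that corner case.
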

The proof of \cref{thm:regularityEstimation} is based on a similar result appearing in \citep{ot:weed2019estimation}, where minimax rates of estimation with respect to the Wasserstein distance $W_p$ are given for smooth densities on the cube.

\begin{remark}\label{rem:rip_estimation}
In the usual problem of estimating a measure thanks to a $n$-sample with respect to the Wasserstein distance, it has been noted several times \citep{trillos2015rate, ot:weed2019estimation, divol2021short} that this problem becomes significantly easier if the measure has a lower bounded density on its domain. In particular, it is known that the risk for the $W_p^p$ loss of the empirical measure attains the faster rate $n^{-p/2}$ (instead of $n^{-1/2}$) under this hypothesis. If such a result is likely to hold for the $\Dp^p$ loss under similar hypothesis, requiring that the EPD has a lower bounded density on some bounded domain $U$ in $\Omega$ appears to be unreasonable. Indeed, this would imply that the density exhibits a sharp change of behavior at the boundary of $U$, whereas the density of the EPD is known to be typically smooth on $\Omega$ \citep{tda:divol2018density}. Whether there exists a more realistic assumption on the EPD for which the rate of convergence of the empirical EPD is $n^{-p/2}$ remains an open question.
\end{remark}

\vspace{-.2cm}
\section{Quantization of the EPD}\label{sec:quantiz}
\vspace{-.1cm}
This section consists of two steps. In \cref{subsec:quantiz_generalities}, we introduce and study the problem of quantizing persistence measures with respect to the metric $\Dp$, proving in particular the existence of optimal quantizers in general. \cref{subsec:quantiz_epd_algo} provides an online algorithm specifically designed to quantize EPD based on a sequence of observed diagrams $\mu_1, \dots, \mu_n$ and provide theoretical guarantees of convergence. 
\vspace{-.2cm}

\subsection{Quantization for persistence measures.}
\label{subsec:quantiz_generalities}


Let $\mu \in \MM^p$ be a persistence measure and $k$ be a fixed integer. 
The goal of the quantization problem is to build a measure $\nu = \sum_{j=1}^k m_j \delta_{c_j}$ supported on a set of $k$ points $\bc = (c_1, \dots, c_k)$ called a \emph{codebook} (while the $(c_j)_j$s are called \emph{centroids}) that approximates $\mu$ in an optimal way. 
Existing works (including previous works in the TDA literature) treat this problem over the space of probability measures equipped with the Wasserstein metric $W_p$ over $\R^d$. 
Here, we use the metric $\Dp$ instead, more suited to PDs, leading to benefits discussed in \cref{rem:Dp_is_gucci} below. 
Our problem consists in minimizing the quantity $((m_1, c_1), \dots, (m_k,c_k)) \mapsto \Dp\left(\sum_j m_j \delta_{c_j}, \mu\right)$ where $m_j \in \R_+$ and $c_j \in \upperdiag$. 
However, we show in \cref{lemma:restrictToCodebook} below that---as in the standard problem using the metric $W_p$---this problem can be reduced to an optimization problem on the codebook $\bc \in \upperdiag^k$ only. 
To that aim, we introduce a notion of \emph{Voronoï tesselation} relative to a codebook $\bc$, with the subtlety that points closer to the diagonal $\thediag$ define a specific cell, see \cref{fig:partition} for an illustration.

\begin{definition}Let $\bc = (c_1 \dots c_k) \in \upperdiag^k$ and denote by convention $c_{k+1} \defeq \thediag$, so that in particular $\|x - c_{k+1}\| \defeq \|x - \thediag\|$. 
Define for $1 \leq j \leq k+1$, 
\begin{equation} \label{eq:def_voronoi}
\begin{aligned}
	V_j(\bc) \defeq &\{ x \in \upperdiag,\ \forall j' < j, \|x - c_j\| \leq \| x - c_{j'}\| \\
							& \text{ and } \forall j' > j, \| x - c_j\| < \| x - c_{j'}\|\}, \\
								N(\bc) \defeq &\{ x \in \upperdiag,\ \exists j < j' \text{ such that } x \in V_j(\bc) \\
	&\text{ and } \|x - c_j \| = \| x - c_{j'} \| \}.
\end{aligned}
\vspace{-.2cm}
\end{equation}
\end{definition}
Observe that $V_1(\bc), \dots, V_{k+1}(\bc)$ form a partition of $\upperdiag$.

\begin{figure}
\center
\includegraphics[width=0.8\columnwidth]{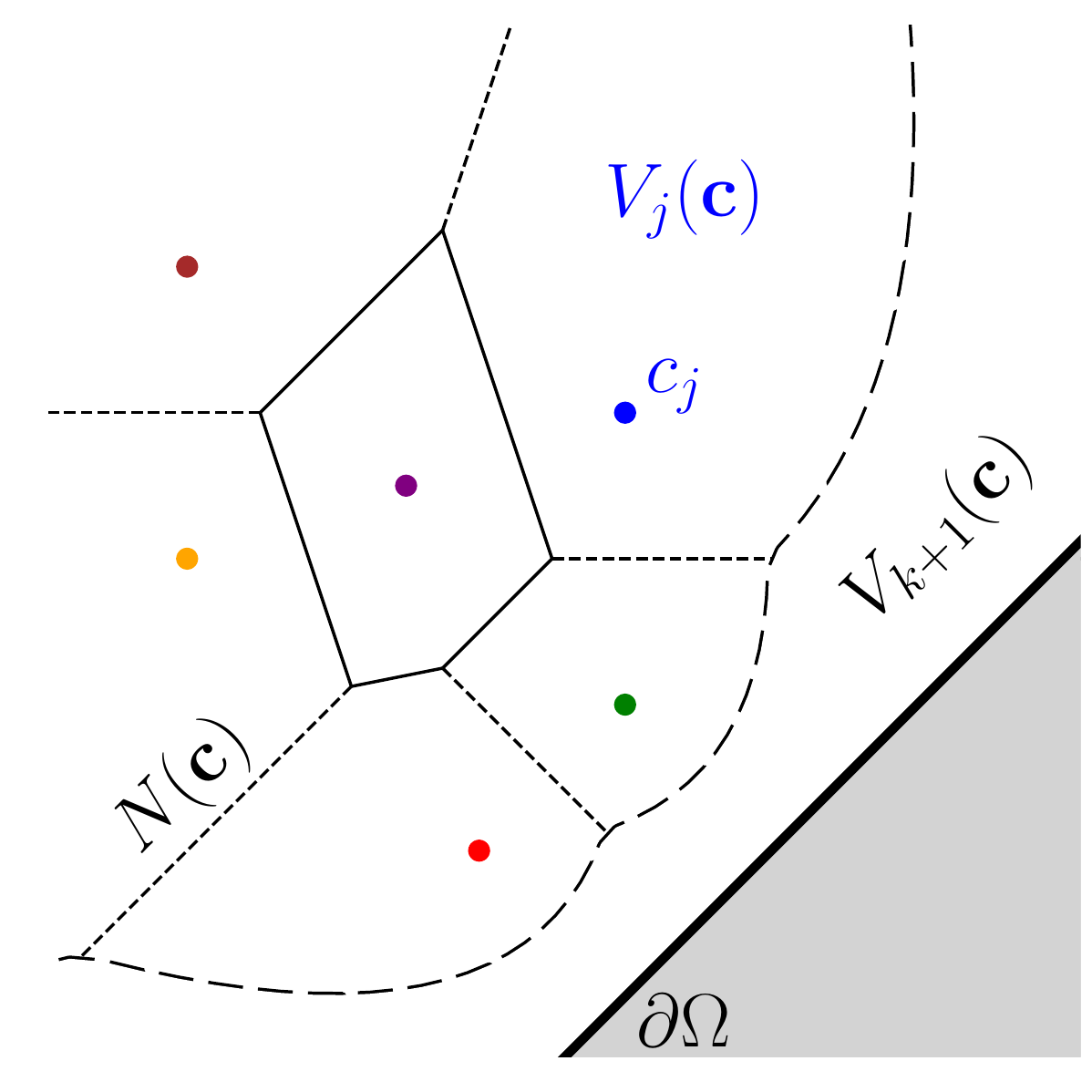}
\vspace{-.5cm}
\caption{Example of partition $V_1(\bc),\dots,V_{k+1}(\bc)$ for a given codebook $\bc$.}
\vspace{-.3cm}
\label{fig:partition}
\end{figure}

\begin{remark}\label{rem:Dp_is_gucci}
    The difference between our approach and previous ones (in particular \citep{chazal2020optimal}) lies in the presence of the ``diagonal cell'' $V_{k+1}(\bc)$. This cell introduces parabolic-shaped boundaries which slightly change the geometry of our problem. However, it has two major benefits. First, it enables a natural geometric identification of points close to the diagonal (which play a specific role in TDA) through the cell $V_{k+1}$ and we do not ``waste'' centroids $(c_j)_{j=1}^k$ to encode them. Second, our approach does not require the introduction of a weight function (that artificially lowers the mass of points close to the diagonal), as typically done; removing the dependency on an important hyper-parameter.
\end{remark}

The following lemma states that given a persistence measure $\mu$ and a codebook $\bc = (c_1,\dots,c_k)$, it is always optimal to set $m_j = \mu(V_j(\bc))$. 
\begin{lemma}\label{lemma:restrictToCodebook}
	Let $\bc = (c_1, \dots, c_k)$. Let $\hat{\mu}(\bc) \defeq \sum_{j=1}^{k} \mu(V_j(\bc)) \delta_{c_j}$. Let $\nu = \sum_{j=1}^k m_j \delta_{c_j}$ for some $m_1,\dots,m_k \geq 0$. Then $\Dp(\hat{\mu}(\bc) , \mu) \leq \Dp(\nu, \mu)$.
\end{lemma}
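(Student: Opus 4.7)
The plan is to compare $\Dp(\hat\mu(\bc), \mu)$ and $\Dp(\nu, \mu)$ through a single benchmark quantity, namely the transport cost $\int_\upperdiag \|x - T(x)\|^p \dd\mu(x)$ where $T : \upperdiag \to \groundspace$ is the nearest-neighbor projection onto the augmented codebook $\{c_1, \ldots, c_k\} \cup \thediag$. I will show that this quantity is both an upper bound on $\Dp(\hat\mu(\bc), \mu)^p$ and a lower bound on the cost of any $\pi \in \Adm(\mu, \nu)$, at which point the lemma follows by taking infima.

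Precisely, I would define $T(x) \defeq c_j$ if $x \in V_j(\bc)$ with $j \leq k$ and $T(x) \defeq \projdiag(x)$ if $x \in V_{k+1}(\bc)$. Because the cells $V_1(\bc), \ldots, V_{k+1}(\bc)$ are defined so that each $V_j(\bc)$ contains exactly the points for which $c_j$ is a closest target among $\{c_1, \dots, c_k\}\cup\thediag$ (with the convention $c_{k+1} = \thediag$), the map $T$ satisfies the pointwise inequality
\begin{equation*}
  \|x - T(x)\| \leq \|x - y\| \qquad \text{for every } x \in \upperdiag \text{ and every } y \in \{c_1,\ldots,c_k\} \cup \thediag.
\end{equation*}

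For the upper bound, I would consider the plan $\tilde\pi \defeq (\id \times T)_\#\mu$ on $\groundspace \times \groundspace$. Its first marginal is $\mu$; its second marginal decomposes as $\sum_{j=1}^k \mu(V_j(\bc))\delta_{c_j}$ (from cells $V_1, \ldots, V_k$) plus a measure supported on $\thediag$ (the image of $V_{k+1}(\bc)$ under $T$). Since the latter piece lives on the diagonal, it is unconstrained in the partial-transport framework, so $\tilde\pi \in \Adm(\mu, \hat\mu(\bc))$ and $\Dp(\hat\mu(\bc), \mu)^p \leq \int_\upperdiag \|x - T(x)\|^p \dd\mu(x)$. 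For the lower bound, I would take any $\pi \in \Adm(\mu, \nu)$; since the second marginal of $\pi$ on $\upperdiag$ equals $\nu = \sum_j m_j \delta_{c_j}$, for $\pi$-almost every $(x, y)$ with $x \in \upperdiag$ one has $y \in \{c_1, \ldots, c_k\} \cup \thediag$, so the pointwise inequality applies. Dropping pairs with $x \in \thediag$ and using that the first marginal of $\pi$ on $\upperdiag$ is $\mu$ yields
\begin{equation*}
  \iint_{\groundspace \times \groundspace} \|x-y\|^p \dd\pi(x,y) \;\geq\; \int_\upperdiag \|x - T(x)\|^p \dd\mu(x),
\end{equation*}
and taking the infimum over $\pi$ gives $\Dp(\nu, \mu)^p \geq \int_\upperdiag \|x - T(x)\|^p \dd\mu(x) \geq \Dp(\hat\mu(\bc), \mu)^p$. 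The $p = \infty$ case is handled identically, replacing integrals by suprema over $\supp(\pi)$.

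The argument is conceptually clean, so the only genuine obstacle is partial-transport bookkeeping: one must check that $\tilde\pi$ really is admissible despite $T$ sending mass from $V_{k+1}(\bc)$ onto $\thediag$ (this is where the absence of a marginal constraint on $\thediag$ is crucial), and that for the lower bound it is legitimate to discard the contribution of $\thediag \times \groundspace$ and restrict $y$ to lie in $\{c_1,\ldots,c_k\} \cup \thediag$ using the support of $\nu$. Once these points are handled, the inequality reduces to the pointwise optimality property of $T$, which holds by construction of the Voronoï-type partition.
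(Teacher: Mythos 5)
Your proof is correct and takes essentially the same route as the paper: you define the nearest-neighbor map $T$ onto $\{c_1,\dots,c_k\}\cup\thediag$, use the pushforward plan $(\id\times T)_\#\mu$ to upper-bound $\Dp^p(\hat\mu(\bc),\mu)$ by $\int_\upperdiag \|x-T(x)\|^p\dd\mu$, and lower-bound the cost of an arbitrary admissible plan for $(\mu,\nu)$ by that same quantity via the pointwise optimality of $T$. The paper's write-up organizes the lower bound as a sum over targets $c_j$ rather than invoking the benchmark quantity by name, but the underlying argument and admissibility bookkeeping are identical.
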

Therefore, quantizing $\mu$ boils down to the choice of the codebook $\bc$. Formally, given a persistence measure $\mu$ to be quantized, a parameter $1 \leq p < \infty$ and an integer $k$, the quantization problem in the space of persistence measures consists in minimizing $R_{k,p}: \upperdiag^k \to \R$ defined for $\bc\in \upperdiag^k$ by
\begin{equation}\label{eq:quantiz_in_PD_space}
\begin{aligned}
R_{k,p}(\bc)&\defeq\Dp(\hat{\mu}(\bc), \mu) \\
		&= \left(\sum_{j=1}^{k+1} \int_{V_j(\bc)} \|x - c_j\|^p \dd \mu(x)\right)^{\frac{1}{p}},
	\end{aligned}
\end{equation}
To alleviate notations, we write $R_k$ instead of $R_{k,p}$ when the parameter $p$ does not play a significant role. 
The value $R_k(\bc)$ is called the \emph{distortion} achieved by $\bc$. 
Let $R_k^* \defeq \inf_{\bc \in \upperdiag^k} R_k(\bc)$ and let $\bC_k \defeq \argmin_{\bc\in \upperdiag^k} R_k(\bc)$ be the set of optimal codebooks. 
Note that $R_k^* = 0$ if (and only if) $|\supp(\mu)| \leq k$. 
From now on, \emph{we assume that $\mu$ has at least $k$ points in its support.}

We can now state the main result of this subsection: the existence of an optimal codebook $\bc^*$ for any persistence measure in $\MM^p$. This result shares key ideas with \citep[Thm~4.12]{graf2007foundations}, although we replace the assumption of finite $p$-th moment of the measure to be quantized by the assumption of finite total persistence $\Pers_p(\mu) < \infty$, more natural in TDA ($\mu$ may even have infinite total mass in our setting).

\begin{prop}[Existence of minimizers]\label{prop:existence_and_prop_of_minimizers}
The set of optimal codebooks $\bC_k$ is a non-empty compact set. Furthermore, if $\bc^*\in\bC_k$, then, for all $1\leq j \neq j'\leq k$, $\mu(V_j(\bc^*))>0$ and $c_j^* \neq c_{j'}^*$.
\end{prop}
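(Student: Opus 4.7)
My plan is to proceed by induction on $k$, combining the direct method of the calculus of variations with an ``escape'' analysis to cope with the non-compactness of $\upperdiag$. The base case $k=0$ is trivial, since $R_0(\emptyset) = \Pers_p(\mu)^{1/p}$ is attained by the empty codebook. For the inductive step, the first observation is that $R_k$ is continuous on $\upperdiag^k$: writing
\begin{equation*}
R_k(\bc)^p = \int_\upperdiag \min\bigl(\min_{1 \leq j \leq k} \|x - c_j\|^p,\ \|x - \thediag\|^p\bigr) \dd\mu(x),
\end{equation*}
the integrand is continuous in $\bc$ and dominated by $\|x - \thediag\|^p \in L^1(\mu)$, so dominated convergence applies; in particular $R_k^* \leq \Pers_p(\mu)^{1/p} < \infty$.

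The central ingredient is the strict monotonicity $R_k^* < R_{k-1}^*$, which I would establish using the inductive hypothesis that $\bC_{k-1}$ is non-empty. Fixing any $\bc^\dagger \in \bC_{k-1}$, the assumption $|\supp(\mu)| \geq k$ provides a support point $x_0 \in \supp(\mu) \setminus \{c_1^\dagger, \dots, c_{k-1}^\dagger\}$, and $\delta \defeq \min(\min_j \|x_0 - c_j^\dagger\|, \|x_0 - \thediag\|)$ is strictly positive. Adding $x_0$ as a $k$-th centroid strictly decreases the distortion on the ball $B(x_0, \delta/4)$ (points there are at distance $\leq \delta/4$ from $x_0$ but $\geq 3\delta/4$ from every other centroid and from $\thediag$), which carries positive $\mu$-mass since $x_0 \in \supp(\mu)$, so $R_k^* \leq R_k(\bc^\dagger, x_0) < R_{k-1}^*$. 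Existence of a minimizer $\bc^* \in \bC_k$ then follows: given a minimizing sequence $(\bc^{(n)})_n$, extract a subsequence along which each $c_j^{(n)}$ converges in the compactification $\upperdiag \cup \thediag \cup \{\infty\}$; letting $J$ index the coordinates whose limit lies in $\upperdiag$, another dominated convergence argument gives $R_k(\bc^{(n)}) \to R_{|J|}(\bc^*_J)$, because centroids escaping to infinity become irrelevant while those converging to some $c^\star \in \thediag$ satisfy $\|x - c^\star\| \geq \|x - \thediag\|$ and are thus absorbed by the diagonal cell. If $|J| < k$, this would give $R_k^* \geq R_{|J|}^* \geq R_{k-1}^* > R_k^*$, a contradiction; hence $|J| = k$ and the limit lies in $\upperdiag^k$ and attains $R_k^*$ by continuity.

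The structural properties follow from the same strict inequality. If $\mu(V_j(\bc^*)) = 0$ for some $j$, removing $c_j^*$ yields a $(k-1)$-codebook with identical distortion, contradicting $R_k^* < R_{k-1}^*$; and if $c_j^* = c_{j'}^*$ with $j < j'$, the tie-breaking convention in the definition of $V_j(\bc)$ (strict inequality for indices $j' > j$) forces $V_j(\bc^*) = \emptyset$, which reduces to the previous case. For compactness, $\bC_k = R_k^{-1}(\{R_k^*\})$ is closed by continuity of $R_k$, and the escape analysis above precludes optimal codebooks from approaching $\thediag$ or $\infty$, so $\bC_k$ is bounded in $\R^{2k}$ and uniformly bounded away from $\thediag$, hence compact in $\upperdiag^k$. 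The main obstacle I anticipate is precisely this escape step: one must carefully justify, via dominated convergence, that centroids drifting to the diagonal are ``absorbed'' by the $\thediag$-cell, a feature specific to the $\Dp$ geometry where $\thediag$ already acts as a mass sink and which is what allows strict monotonicity to close the argument.
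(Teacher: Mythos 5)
Your proof is correct and reaches the same conclusion, but the route is genuinely different from the paper's, and in fact the logical order of the key steps is inverted. The paper first proves (as Lemma \ref{lemma:centroids_are_distincts}) that a codebook with an empty cell is strictly suboptimal, splitting into two cases: when $\mu(V_{k+1}(\bc))>0$ it moves the unused centroid into a small region of the diagonal cell, and when $\mu(V_{k+1}(\bc))=0$ it falls back to the classical quantization argument of \citet{graf2007foundations}. It then derives the strict decrease $R_{m}^* < R_{m-1}^*$ by duplicating a centroid of an optimal $(m-1)$-codebook (producing an empty cell), and obtains existence from compactness of the sublevel sets $\{R_k \leq \lambda\}$ for $\lambda < R_{k-1}^*$ (Lemma \ref{lemma:aux_minimizer}). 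You instead prove strict monotonicity \emph{directly}, by inserting a support point $x_0$ of $\mu$ that is separated from the optimal $(k-1)$-centroids and from $\thediag$, and observing that a small ball around $x_0$ carries positive mass and strictly reduced cost; you then derive both structural properties (nonempty cells, distinct centroids) as corollaries of strict monotonicity, and obtain existence via a minimizing-sequence and one-point-compactification argument in which centroids drifting to $\thediag$ or to $\infty$ are absorbed or become irrelevant, yielding the contradiction $R_k^* \geq R_{k-1}^* > R_k^*$ when fewer than $k$ centroids survive. Your approach is more self-contained (it avoids the external appeal to \citet{graf2007foundations} for the zero-diagonal-mass case) and the escape analysis replaces the sublevel-set compactness lemma; the paper's Lemma \ref{lemma:centroids_are_distincts} is formally stronger in that it applies to arbitrary, not just optimal, codebooks, but that extra generality is not used elsewhere. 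Both arguments rely in the same essential way on the finiteness of $\Pers_p(\mu)$ as the dominating function for dominated convergence, and on the role of $\thediag$ as a free mass sink, which is the feature that lets diagonal-bound centroids vanish harmlessly.
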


\begin{corollary}\label{coro:positive_quantities}
The following quantities are positive:
	\begin{equation}
		\begin{aligned}
			\Dmin &\defeq \inf_{\bc^* \in \bC_k, 1 \leq j \neq j' \leq k+1} \| c^*_j - c^*_{j'}\|, \\
			\mmin &\defeq \inf_{\bc^* \in \bC_k, 1 \leq j \leq k} \mu(V_j(\bc^*)). \\
		\end{aligned}
	\end{equation}
\end{corollary}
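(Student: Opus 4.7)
The plan combines the compactness of $\bC_k$ granted by \cref{prop:existence_and_prop_of_minimizers} with continuity/semicontinuity properties of the quantities involved.

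\emph{For $\Dmin$.} For each pair $(j,j')$ with $1 \leq j \neq j' \leq k+1$, the map $\bc \mapsto \|c_j - c_{j'}\|$ (using the convention $c_{k+1} \equiv \thediag$, so that $\|c_j - c_{k+1}\|$ denotes the distance from $c_j$ to the diagonal) is continuous on $\upperdiag^k$. The minimum over these finitely many pairs is therefore continuous, and by \cref{prop:existence_and_prop_of_minimizers} it is strictly positive pointwise on $\bC_k$: the pairs $j,j' \leq k$ are handled by $c_j^* \neq c_{j'}^*$, and the pairs involving the diagonal by the fact that $c_j^* \in \upperdiag$ lies in the open half-plane, strictly away from $\thediag$. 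A continuous strictly positive function on a non-empty compact set attains a strictly positive infimum, so $\Dmin > 0$.

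\emph{For $\mmin$.} The strategy is to prove lower semicontinuity on $\bC_k$ of $\psi_j : \bc \mapsto \mu(V_j(\bc))$ for each $j \in \{1,\dots,k\}$. Combined with its pointwise positivity ($\psi_j(\bc^*) > 0$ from \cref{prop:existence_and_prop_of_minimizers}) and the compactness of $\bC_k$, this yields $\inf_{\bc^* \in \bC_k} \psi_j(\bc^*) > 0$; taking the minimum over finitely many $j$ then gives $\mmin > 0$. For the lsc step, introduce the open Voronoï cell $V_j^\circ(\bc) \defeq \{x \in \upperdiag : \|x - c_j\| < \|x - c_{j'}\| \text{ for all } j' \neq j,\ j' \leq k+1\}$. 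If $\bc_n \to \bc^*$, continuity of distance implies that every $x \in V_j^\circ(\bc^*)$ eventually belongs to $V_j(\bc_n)$; Fatou's lemma applied to the indicators then yields $\mu(V_j^\circ(\bc^*)) \leq \liminf_n \mu(V_j(\bc_n))$.

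The remaining step---which I expect to be the main obstacle---is to show that $\mu(V_j(\bc^*)) = \mu(V_j^\circ(\bc^*))$ at any $\bc^* \in \bC_k$, i.e.~that the bisector set $V_j(\bc^*) \setminus V_j^\circ(\bc^*)$, a finite union of line segments (or parabolic arcs at the diagonal boundary) shared between adjacent Voronoï cells, is $\mu$-null at any optimum. I would address this via a perturbation/optimality argument: if $\mu$ charged a bisector arc between $c_j^*$ and some $c_{j'}^*$, an infinitesimal translation of $c_j^*$ perpendicular to that arc and toward the charged portion would strictly decrease the integrand $\|x - c_j\|^p$ on a set of positive $\mu$-mass to first order in the perturbation, producing a codebook with strictly smaller distortion and contradicting $\bc^* \in \bC_k$. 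Granting this nullness, the lsc chain closes and the corollary follows.
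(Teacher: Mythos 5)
Your $\Dmin$ argument is correct and matches the paper's: the pairwise distance maps $\bc \mapsto \|c_j - c_{j'}\|$ and $\bc \mapsto \|c_j - \thediag\|$ are continuous, their minimum over the finitely many pairs is continuous and strictly positive pointwise on the nonempty compact set $\bC_k$ (by \cref{prop:existence_and_prop_of_minimizers}), so the infimum is attained and positive.

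For $\mmin$ you are more careful than the paper, which simply asserts that $\bc \mapsto \mu(V_j(\bc))$ is continuous. That map need not be continuous on $\upperdiag^k$ when $\mu$ has an atom on a bisector, so some argument is indeed required, and your Fatou step together with the reduction to showing $\mu\bigl(V_j(\bc^*) \setminus V_j^\circ(\bc^*)\bigr) = 0$ at each $\bc^* \in \bC_k$ is sound. However, your sketch of that last step has the perturbation direction backwards. At an optimum, $c_j^*$ is the $p$-center of $\mu$ restricted to $V_j(\bc^*)$, a cell that already contains any boundary atom $x_0$ under the tie-break convention. Moving $c_j^*$ toward $x_0$ or toward the bisector keeps the atom in $V_j$; by stationarity of the $p$-center the first-order change in distortion is then zero, so the strict decrease you claim does not occur. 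The right move is the opposite: push $c_j^*$ away from $x_0$ along $-(x_0 - c_j^*)$ (note $c_j^* \neq x_0$, otherwise the two tied objects would coincide, contradicting \cref{prop:existence_and_prop_of_minimizers}). Then $x_0$ falls into the competing cell at no extra cost since it is equidistant, while $c_j^*$ is no longer the $p$-center of the surviving mass $V_j(\bc^*) \setminus \{x_0\}$: the gradient of the within-cell cost becomes $p\,\mu(\{x_0\})\,\|x_0 - c_j^*\|^{p-2}(x_0 - c_j^*) \neq 0$, and moving opposite to it strictly decreases the distortion to first order, contradicting optimality. With this correction your lower-semicontinuity route closes the gap and actually fills in a detail the paper's terse continuity claim leaves implicit.
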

 \vspace{-.2cm}

\textbf{Computational aspects.} One could consider to numerically solve the quantization problem \eqref{eq:quantiz_in_PD_space} deriving optimization algorithms based on their counterpart in the optimal transport literature \citep{ot:cuturi2014fast}, see \citep[\S 7.2]{tda:lacombe:tel-02979251} for instance. 
However, using such techniques to quantize empirical EPDs would not be satisfactory for two reasons. 
First, the empirical EPD has in general a large number of points, hindering computational efficiency. 
Second, we want to leverage the fact that we observe a sequence of diagrams $\mu_1, \dots, \mu_n$, and not only their sum, to design an online algorithm that remains tractable with large sequences of large diagrams.

\setlength{\textfloatsep}{.2cm}
\begin{algorithm}[tb]
    \caption{Online quantization of EPDs} 
    \label{alg:online_quantiz}
\begin{algorithmic}
\STATE {\bfseries Input:} A sequence $\mu_1, \dots, \mu_n$, integer $k$, parameter $p$.
\STATE {\bfseries Preprocess:} Divide indices $\{1, \dots, n\}$ into batches $(B_1, \dots, B_T)$ of size $(n_1, \dots, n_T)$. Furthermore, divide $(B_t)_t$ into two halves $B_t^{(1)}$ and $B_t^{(2)}$.
\STATE Set $\overline{\mu}_{t}^{(\alpha)} \defeq \frac{2}{n_t}\sum_{i \in B_t^{(\alpha)}} \mu_i$ for $1 \leq t \leq T, \alpha \in \{1,2\}$.
\STATE {\bfseries Init:} Sample $c^{(0)}_1 \dots c^{(0)}_k$ from the diagrams.
 
 \FOR{$t=0, \dots, T-1$}
   \STATE $\bc^{(t+1)} =  U_p(t, \bc^{(t)}, \overline{\mu}_{t+1}^{(1)}, \overline{\mu}_{t+1}^{(2)}) $ using \eqref{eq:update_centroid}
 \ENDFOR
 \STATE {\bfseries Output:} The final codebook $\bc^{(T)}$.
 \end{algorithmic}
\end{algorithm}

\begin{figure*}
    \includegraphics[width=0.68\textwidth]{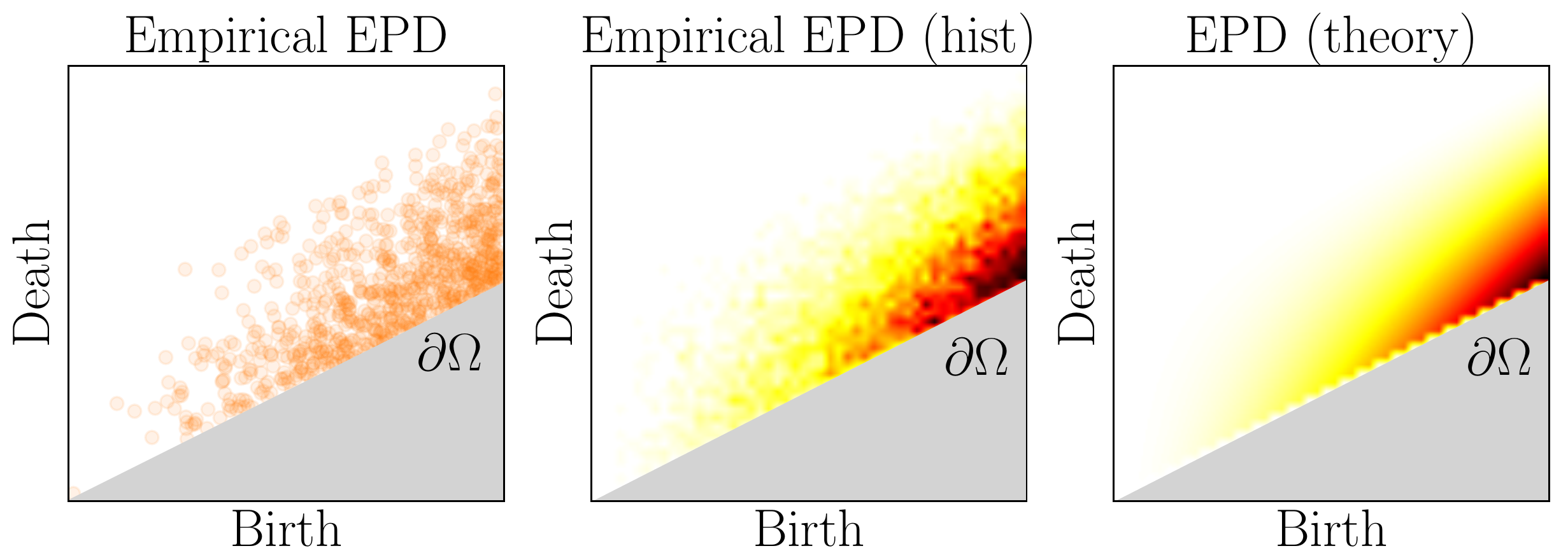}
    \includegraphics[width=0.3\textwidth]{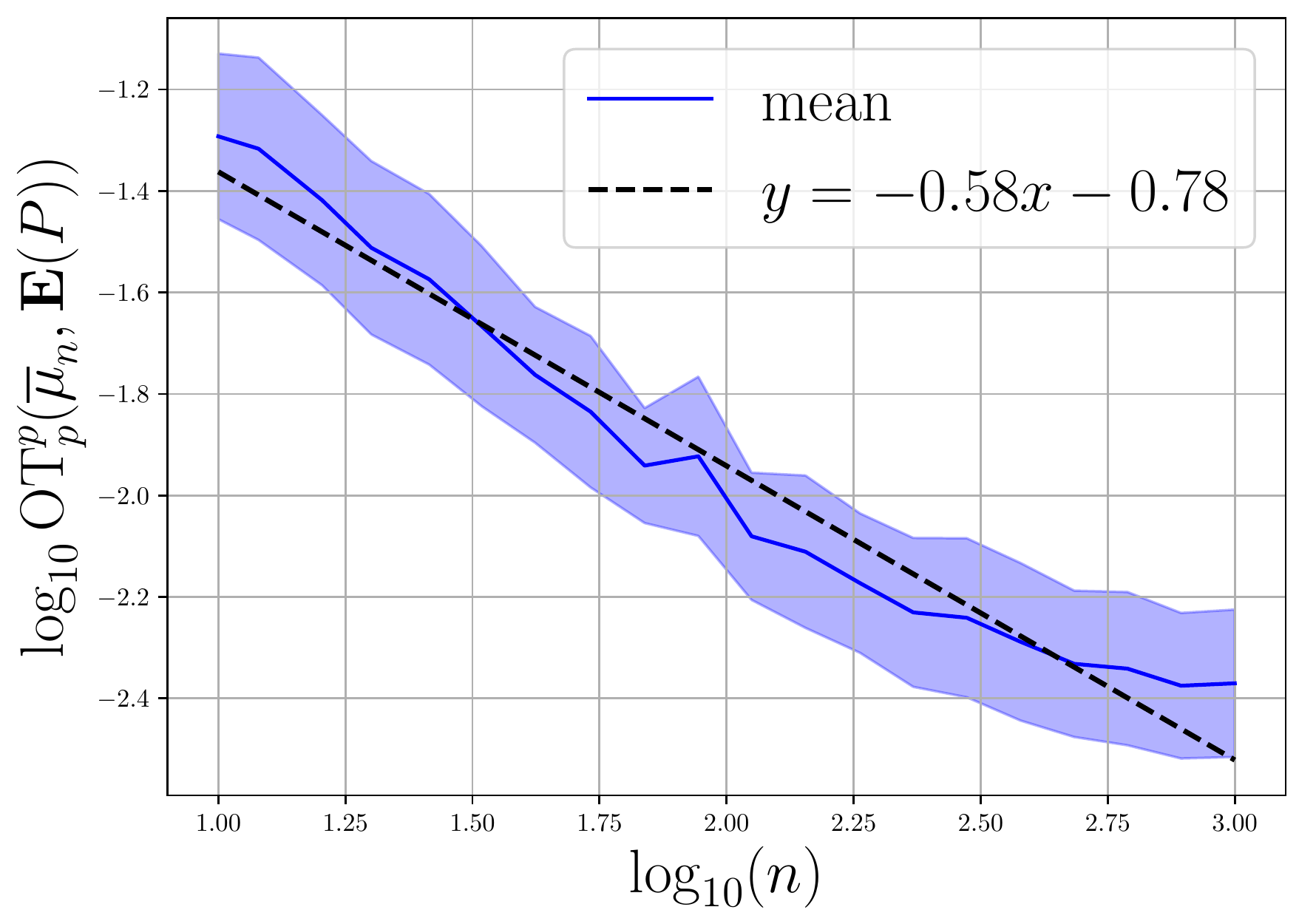} 
           \vspace{-.3cm}
    \caption{From left to right. (a) Empirical EPD $\overline{\mu}_n$ with $n=10^3$. (b) Histogram of the empirical EPD on a $50\times 50$ grid. (c) EPD $\EPD$ of $P$, displayed on the same grid. (d) Distance $\Dp^p(\overline\mu_n,\EPD)$ for $p=2$ for different values of $n$ in log-log scale (mean and standard deviation over 100 runs). A linear regression shows a convergence rate of order $n^{-0.58}$, close to the theoretical rate of $n^{-1/2}$ indicated by \cref{thm:minimax_result}.}
    \vspace{-.5cm}
    \label{fig:epd_convergence}
\end{figure*}

\vspace{-.2cm}
\subsection{Quantization of an empirical EPD}
\label{subsec:quantiz_epd_algo}
In \cref{alg:online_quantiz}, we propose an online algorithm---adapted from \citep[Alg.~2]{chazal2020optimal} to the context of PDs and with arbitrary $p>1$ instead of $p=2$---that takes a sequence of observed PDs $\mu_1, \dots, \mu_n$ (a $n$-sample of law $P$) and outputs a codebook $(c_1,\dots,c_k)$ aiming at approximating $\EPD$. 
The algorithm relies on an update function $U_p$ for $p > 1$ defined as 
\vspace{-.2cm}
\begin{equation} \label{eq:update_centroid}
   U_p(t, \bc, \mu, \mu')\hspace{-.04cm}  \defeq\hspace{-.04cm} \bc - \frac{ \left( \hspace{-.02cm}\frac{\mu(V_j(\bc))}{\mu'(V_j(\bc)  )} \left(c_j  - v_p(\bc, \mu)_j \right)\hspace{-.02cm} \right)_j}{t+1},\hspace{-.07cm}
\end{equation}
where $v_p(\bc,\mu)_j$ is the \emph{$p$-center} of mass of $\mu$ over the cell $V_j(\bc)$:
\vspace{-.4cm}
\begin{equation}\label{eq:p-center-of-mass}
    v_p(\bc, \mu)_j \defeq \argmin_y \left(\int_{V_j(\bc)} \|y - x\|^p \dd \mu(x) \right)^\frac{1}{p}.
    \vspace{-.2cm}
\end{equation}
When $p=2$, one simply has $v_2(\bc,\mu)_j = \int_{V_j(\bc)} x \frac{\dd \mu(x)}{\mu(V_j(\bc))}$ and if in addition $\mu = \mu'$, the update \eqref{eq:update_centroid} simplifies to 
\vspace{-.1cm}
\[ c_j \mapsto \frac{t}{t+1} c_j + \frac{1}{t+1} \int_{V_j(\bc)} x \frac{\dd\mu(x)}{\mu(V_j(\bc))},
\]
so that roughly speaking, we are pushing $c_j$ toward the usual center of mass of $\mu$ over the cell $V_j(\bc)$, similar to what is done when using the Lloyd algorithm to solve the $k$-means problem \citep{lloyd1982least}. 
More generally, \eqref{eq:update_centroid} can be understood as pushing $c_j$ toward the point that would decrease the distortion $R_{k,p}$ over the cell $V_j(\bc)$ the most, using a step-size (or learning rate) $\frac{1}{t+1}$.
There is no closed-form for $v_p$ for $p \neq 2$, though standard convex solvers may be used \citep{gonin1989nonlinear}. 
When $p=+\infty$, a central situation in TDA as it means working with the bottleneck distance $\mathrm{OT}_\infty$, computing $v_\infty$ boils down to get the center of the \emph{smallest enclosing circle} of $V_j(\bc) \cap \supp(\mu)$. When $\mu$ is a discrete measure (e.g.~an empirical EPD), this problem can be solved in linear time with respect to the number of points of $\mu$ that belong to $V_j(\bc)$ \citep{megiddo1983linear}.

Note that in \cref{alg:online_quantiz}, the split of batches $B_t = (B_t^{(1)}, B_t^{(2)})$ is only required for technical considerations (see the supplementary material and \citep{chazal2020optimal}). 
In practice, this algorithm can be used without further assumptions and empirically, using $B_t = B_t^{(1)} = B_t^{(2)}$ yields substantially similar results. 
We provide a theoretical analysis of \cref{alg:online_quantiz} in the case $p=2$, in particular through \cref{thm:online_quantiz} which states that this algorithm is nearly optimal as a way to quantize $\EPD$, provided the initialization is good enough. 
As in \cref{sec:minimax_EPD}, we  consider a probability distribution $P\in \PP^p_{L,M}$. For $t>0$ and $A\subset \upperdiag$, we let $A^t\defeq \{x\in \upperdiag,\ \exists a \in A, \|x-a\| \leq t\}$ be the $t$-neighborhood of $A$.
\begin{definition}[Margin condition]\label{def:margin}
	Let $\bc^*$ be an optimal quantizer of $\EPD$. We say that $P$ satisfies a \emph{margin condition} of parameter $\lambda > 0$ and radius $r_0$ at $\bc^*$ if, for all $t \in [0, r_0]$, one has $\EPD (N(\bc^*)^t) \leq \lambda t.$
\end{definition}
Margin-like conditions on optimal codebook are standard in quantization literature \citep{tang2016lloyd,levrard2018whenKmeansWork}. 
Informally, it indicates that the EPD concentrates around $k$ poles, aside from the mass that is distributed close to the diagonal $\thediag$; the smaller the $\lambda$, the more concentrated the measure. 
Note that this condition holds as long as the $\EPD$ has a bounded density (although with possibly large $\lambda$), a property which is satisfied in a large number of situations, see \citep{tda:divol2018density}. 

\vspace{-.1cm}
The following theorem states that given a $n$-sample of law $P$, \cref{alg:online_quantiz} outputs in $T = \frac{n}{\log(n)}$ steps a codebook $\bc^{(T)}$ that approximates (in expectation) an optimal codebook $\bc^*$ for $\EPD$ at rate $\frac{\log(n)}{n}$, to be compared with the optimal rate of $\frac{1}{n}$ \citep[Prop.~7]{levrard2018whenKmeansWork}. 
It echoes \citep[Thm.~5]{chazal2020optimal} with the difference that, thanks to the diagonal cell $V_{k+1}$, we require a uniform bound on the total persistence of the measures rather than a uniform bound on their total mass, a more natural assumption in TDA. 

\begin{theorem}\label{thm:online_quantiz}
	Let $p=2$. Let $P\in\PP^2_{L,M}$ and let $\bc^*$ be an optimal codebook for $\EPD$. Assume that $P$ satisfies a margin condition at $\bc^*$ with parameters $r_0$ large enough and $\lambda$ small enough (with respect to $\Dmin,\mmin,L$ and $M$).  Let $\mu_1, \dots, \mu_n$ be a $n$-sample of law $P$ and $B_1, \dots, B_T$ be equally sized batches of length $C_1 \log(n)$. Finally, let $\bc^{(T)}$ denote the output of \cref{alg:online_quantiz}. There exists $R_0>0$ such that if $\|\bc^{(0)} - \bc^*\| \leq R_0$, then
\[ \E\| \bc^{(T)} - \bc^* \|^2 \leq  C_2 (\log n)/n,
\]
where $C_1, C_2$ and $R_0$ are constants depending on $p,L, M, k, \Dmin$ and  $\mmin$.
\end{theorem}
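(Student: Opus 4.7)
My plan is to recognize the update rule \eqref{eq:update_centroid} (with $p=2$) as a Robbins--Monro stochastic approximation scheme targeting the zeros of the gradient of the expected distortion $F(\bc) \defeq \tfrac{1}{2}\OT_2^2(\hat{\mu}(\bc), \EPD)$. When $\mu, \mu'$ are two independent unbiased estimators of $\EPD$, the numerator $\mu(V_j(\bc))(c_j - v_2(\bc,\mu)_j) = \int_{V_j(\bc)}(c_j - x)\,\dd\mu(x)$ is an unbiased estimator of $\nabla_{c_j} F(\bc) = \EPD(V_j(\bc))\bigl(c_j - v_2(\bc,\EPD)_j\bigr)$, and the factor $1/\mu'(V_j(\bc))$ computed from the independent half-batch $B_t^{(2)}$ acts as a data-driven preconditioner that concentrates around $1/\EPD(V_j(\bc))$. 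Thus, conditionally on $\bc^{(t)}$, the increment $\bc^{(t+1)} - \bc^{(t)}$ is a preconditioned stochastic gradient step on $F$ with learning rate $1/(t+1)$, up to a higher-order bias coming from the preconditioner.

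\textbf{Key steps.} First, I would use the margin condition of \cref{def:margin}, together with the positivity of $\mmin, \Dmin$ from \cref{coro:positive_quantities}, to establish that $F$ is $\CC^2$ on a ball $\BB(\bc^*, R_0)$ and locally $\kappa$-strongly convex there, with $\kappa>0$ depending on $\mmin, \Dmin, L, M$ and $\lambda$; this is the step where $\lambda$ must be small and $r_0$ large enough that the whole ball $\BB(\bc^*, R_0)$ sits inside the margin-controlled region and avoids degeneracies of the Voronoï partition. Second, on the good event $\GG_t \defeq \{\bc^{(s)} \in \BB(\bc^*, R_0),\ s \leq t\}$, combining this local strong convexity with a conditional variance bound on the stochastic gradient (obtained via Bernstein-type inequalities that exploit the moment bound $\Pers_2(\mu_i) \leq M$) yields the standard one-step recursion
\begin{equation*}
\E\bigl[\|\bc^{(t+1)} - \bc^*\|^2 \mathbf{1}_{\GG_{t+1}}\bigr] \leq \Bigl(1 - \tfrac{c_1}{t+1}\Bigr) \E\bigl[\|\bc^{(t)} - \bc^*\|^2 \mathbf{1}_{\GG_t}\bigr] + \tfrac{c_2}{(t+1)^2},
\end{equation*}
which iterates over $T \asymp n/\log n$ steps to the target $O(\log n / n)$ rate. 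Third, the batch size $|B_t| = C_1 \log n$ should be chosen large enough that, via the same concentration inequalities and the initialization assumption $\|\bc^{(0)} - \bc^*\| \leq R_0$, all iterates remain in $\BB(\bc^*, R_0)$ with probability at least $1 - 1/n$, ensuring that $\prob(\GG_T^c)$ contributes negligibly to the final expectation.

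\textbf{Main obstacle.} The principal difficulty, and the main source of departure from \citep[Thm.~5]{chazal2020optimal}, is the combined effect of the parabolic diagonal cell $V_{k+1}(\bc)$ and the fact that we only assume $\Pers_2(\mu_i) \leq M$ rather than a bound on the total mass $\mu_i(\upperdiag)$ (which may well be infinite, a typical situation in TDA). This precludes direct use of bounded-mass concentration inequalities for the empirical cell masses $\overline{\mu}_t^{(\alpha)}(V_j(\bc))$ and the empirical centers $v_2(\bc, \overline{\mu}_t^{(\alpha)})_j$. The observation that unlocks the proof is that for $\bc \in \BB(\bc^*, R_0)$ with $R_0 < \Dmin/2$ and for $j \leq k$, each centroid $c_j$ stays at distance at least $\Dmin/2$ from $\thediag$, so on $V_j(\bc)$ the integrand $\|x - c_j\|^2$ is dominated by a constant multiple of $\|x-\thediag\|^2$; meanwhile, the mass close to the diagonal is absorbed at zero cost by $V_{k+1}$. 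This allows the $\Pers_2$ bound to be converted into sub-exponential control on all the quantities entering the recursion, after which the Lloyd-type argument proceeds essentially as in the Wasserstein case.
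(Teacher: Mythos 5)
Your proposal follows essentially the same route as the paper's: reduce to a one-step contraction recursion $\E\|\bc^{(t+1)}-\bc^*\|^2 \leq (1-C_0/(t+1))\E\|\bc^{(t)}-\bc^*\|^2 + C_1/(t+1)^2$, prove it by combining the margin condition with Bernstein-type concentration of the empirical cell masses and cell means, and—crucially—exploit the fact that for $\bc$ near $\bc^*$ each centroid $c_j$ ($j\leq k$) stays at distance $\gtrsim \Dmin/2$ from $\thediag$, so the $\Pers_2(\mu)\leq M$ hypothesis yields a uniform mass bound $\mu(V_j(\bc))\lesssim M/\Dmin^2$ on the non-diagonal cells without assuming finite total mass. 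Your framing via local strong convexity of $F(\bc)=\tfrac12\OT_2^2(\hat\mu(\bc),\EPD)$ is a morally equivalent rephrasing of the paper's Lipschitz bound on $\bc\mapsto w_2(\bc,\EPD)$ near $\bc^*$ (both are exactly what the margin condition buys). The one ingredient glossed over is the explicit geometric lemma (the paper's \cref{lemma:dependence_cells_on_centroid}) showing the \emph{parabolic} boundary of $V_{k+1}(\bc)$ moves at Lipschitz rate $O(L^2/\Dmin^2)\,\|\bc-\bc^*\|$ under a codebook perturbation; this is what makes it legitimate to bound the misclassified mass by $\EPD(N(\bc^*)^t)\leq \lambda t$ when the boundaries are not hyperplanes, and is the genuinely new geometric content relative to \citep{chazal2020optimal}. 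That omission is consistent with your level of detail, and the rest of the plan is sound.
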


\begin{figure*}
    \includegraphics[width=0.9\textwidth]{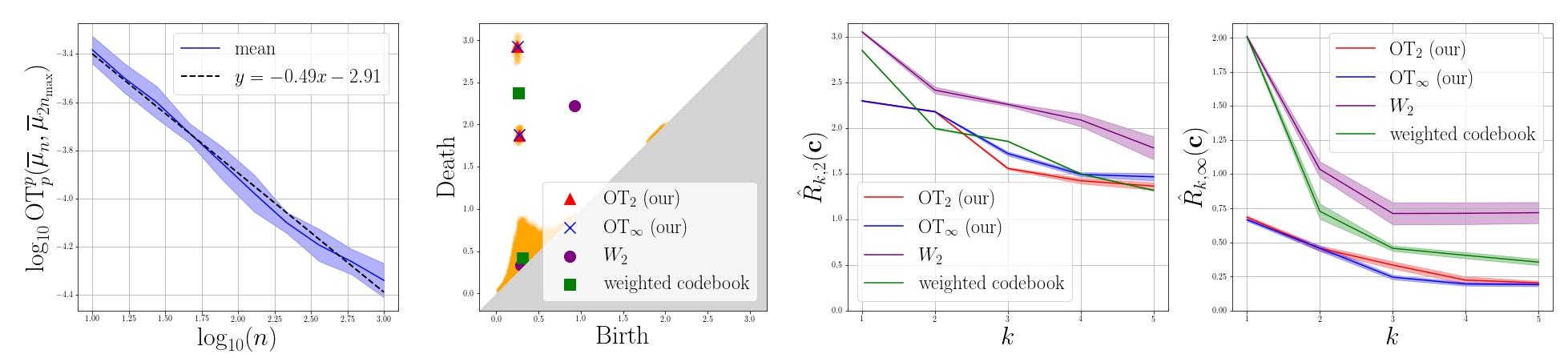}
         \vspace{-.5cm}
    \caption{From left to right: (a) The convergence rate for a point cloud sampled on the surface of a torus, exhibiting a rate of $n^{-1/2}$. (b) The quantization output for the different approaches considered with $k=2$. As our approach accounts for the diagonal through the cell $V_{k+1}$, our codebooks retrieve the two clusters present in the EPD, while other approaches have one centroid used to account for the mass close to the diagonal. (c,d) The average distortion $R_{k,p}$ over $10$ runs for the different methods, with $p=2$ and $p=+\infty$.}
    \vspace{-.4cm}
    \label{fig:torus_expe}
\end{figure*}
\vspace{-.4cm}

\section{Numerical illustrations}
\vspace{-.1cm}
\label{sec:expe}
We now provide some numerical illustrations that showcase our different theoretical results and their use in practice. Throughout, PDs are computed using the \texttt{Gudhi} library \citep{gudhi} and $\Dp$ distances are computed building on tools available from the \texttt{POT} library \citep{flamary2021pot}. See the supplementary material for further implementation details and complementary experiments.

\textbf{Convergence rates for the empirical EPD.}
We first showcase the rate of convergence of Theorem \ref{thm:minimax_result}. There are only few cases where explicit expressions for the EPD of a process are known. For instance, for \v Cech PDs based on a random sample of points, the corresponding EPD is known in closed-form only if the sample is supported on $\R$ \citep[Rem.~4.5]{tda:divolpolonik}. We therefore first consider a simple setting where an explicit expression can be derived. Let $X$ be a set of $N$ triangles $T_1,\dots,T_N$, where $N$ is uniform on $\{1,\dots,20\}$. We let $f:X\to \R$ be a random piecewise constant function, which is equal to $U_{i,j}$ on the $j$th edge of the triangle $T_i$, where the variables $(U_{i,j})$ are i.i.d.~uniform variables on $[0,1]$. 
Furthermore, the function $f$ is equal to $\max_{j=1,2,3} U_{i,j} + V_i$ on the inside of the triangle $T_i$, where the $V_i$s are independent, independent from the $U_{i,j}$s, and follow a Beta distribution $\beta(1,3)$. 
Let $P$ be the distribution of the associated random PD. 
Let $\mathbf{rec}$ be the rectangle $[r_1,r_2]\times[s_1,s_2]$ for $r_1\leq r_2\leq s_1\leq s_2$. Then,
\vspace{-.1cm}
\begin{equation}
     \EPD(\mathbf{rec}) = 30\int_{r_1}^{r_2} t^2\mathbb{P}(s_1-t\leq V\leq s_2-t)\dd t,
     \vspace{-.1cm}
\end{equation}
where $V\sim \beta(1,3)$. In practice, we compute $\EPD$ on a discretization of $[0,1]\times[0,2]$ through a grid of size $50 \times 50$. Meanwhile, we sample empirical EPDs $\overline \mu_n$ for $10 \leq n \leq 10^3$. 
In order to estimate $\Dp^p(\overline{\mu}_n, \EPD)$, we also turn these EPDs into histograms on the same grid, and then compute the $\Dp$ distance between two histograms. See \cref{fig:epd_convergence} for an illustration which showcases in particular the expected rate $n^{-1/2}$.

We also exhibit the convergence of the empirical EPD in a more usual setting for the TDA practitioner. Namely, we build a random point cloud $\mathbf{X}$ with $10^3$ points sampled on the surface of a torus with outer radius $r_1=5$ and inner radius $r_2=2$, and then consider the corresponding random \Cech{} diagram for the $1$-dimensional homology (loops, see \cref{sec:background}). 
Given $n$ realizations of $\mathbf{X}$, we compute the empirical EPD $\overline\mu_n$, where $n$ ranges from $10$ to  $n_{max}=1000$. 
As no closed-form for the corresponding EPD is known, we use as a proxy the empirical EPD based on a sample of size $2n_{\mathrm{max}}$, and then showcase in  \cref{fig:torus_expe} (left) the convergence of $\Dp^p(\overline{\mu}_n, \overline{\mu}_{2n_{\mathrm{max}}})$ at rate $n^{-1/2}$.

\textbf{Quantization of the EPD.} We now illustrate the behavior of \cref{alg:online_quantiz} using $p=2$ and $p=\infty$ (referred to as ``$\mathrm{OT}_2$'' and ``$\mathrm{OT}_\infty$'', respectively) and compare it to two natural alternatives. \citep[Alg.~2]{chazal2020optimal} is essentially the same algorithm without the ``diagonal cell'' $V_{k+1}(\bc)$; as such, centroids are dramatically influenced by points close to the diagonal which are likely to be abundant in standard applications of TDA. It is referred to as ``$W_2$'' in our illustrations, as it relies on quantization with respect to the Wasserstein distance with $p=2$. 
The second alternative, referred to as ``weighted codebook'', is the one proposed in \citep{tda:zielinski2020persistenceCodebook}, which can be summarized in the following way: consider the empirical EPD $\overline{\mu}_n$ built on top of observations $\mu_1,\dots,\mu_n$ (that is, concatenate the diagrams), and then subsample $N$ points in the support of the empirical EPD, with the subtlety that the probability of choosing a point $x \in \supp(\overline{\mu}_n)$ depends on a weight function $w : \upperdiag \to \R_+$. 
Typical choices for $w$ are of the form $w(x) = \min \left(\max\left(0, \frac{(\|x-\thediag\|^q - \lambda)}{\theta - \lambda}\right) , 1 \right)$ for some parameters $(\lambda, q, \theta)$; the goal being to favor sampling points far from the diagonal. 
\citeauthor{tda:zielinski2020persistenceCodebook} propose, in practice, to sample $N=10^4$ points and to set $q=1$, while $\lambda$ and $\theta$ are the $0.05$ and $0.95$ quantiles of the distribution of $\{\|x-\thediag\|^q,\ x \in \supp(\overline{\mu}_n)\}$, respectively. We use these parameters in our experiments.
One then runs the Lloyd algorithm ($k$-means) on the set of $N$ points that have been sampled to obtain a quantization of the empirical EPD. 

 \vspace{-.1cm}
We compare the different approaches in the following experiment. We randomly sample a point cloud $\mathbf{X}$ of size $\mathbf{m}$ on the surface of a torus with radii $(\mathbf{r_1}, \mathbf{r_2})$, where $\mathbf{m}, \mathbf{r_1}, \mathbf{r_2}$ are random variables that respectively follow a Poisson distribution of parameter $m \in \mathbb{N}$, a uniform distribution over $[r_1 - \epsilon, r_1 + \epsilon]$ and a uniform distribution over $[r_2 - \epsilon, r_2 + \epsilon]$. We use $m=2,000, \epsilon=0.1, r_1 = 5$ and $r_2=2$ in our experiments.
Given such a random point cloud $\mathbf{X}$, we build the \Cech{} persistence diagram of its $1$-dimensional features, denoted by $\mu$, leading to a distribution $P$ of PDs. 
We then build a $n$-sample $\mu_1, \dots, \mu_n$ with $n=100$ and, for $k \in \{1, \dots, 5\}$, compute the different codebooks returned by the aforementioned methods, using batches of size $10$ for $\mathrm{OT}_2, \mathrm{OT}_\infty$ and $W_2$. All algorithms are initialized in the same way: we select the $k$ points of highest persistence in the first diagram $\mu_1$. To compare the quality of these codebooks, we evaluate their distortion \eqref{eq:quantiz_in_PD_space} with $p=2$ and $p=\infty$. As we do not have access to the true EPD $\EPD$, we approximate this quantity through its empirical counterpart $\hat{R}_{k,p}(\bc) \defeq \left( \int_\upperdiag \min_{1 \leq j\leq c_{k+1}} \|x - c_j\|^p \dd \overline{\mu}_n(x) \right)^\frac{1}{p}$, with $\hat{R}_{k,\infty}(\bc) = \max_{x \in \supp(\overline{\mu}_n)} \min_j \|x-c_j\|$. 
Results are given in \cref{fig:torus_expe}. Interestingly, when $p=2$ our approach is on a par with the weighted codebook approach, but becomes substantially better when evaluated with $p=\infty$, that is using the bottleneck distance which is the most natural metric to handle PDs.

 \vspace{-.2cm}

\section{Conclusion}
 \vspace{-.1cm}
This work is dedicated to the estimation of expected persistence diagrams, for which we prove that they are approximated, for the natural diagram metrics $\Dp$, by their empirical counterpart in an optimal way from a minimax perspective. 
We then introduce and study the quantization problem in the space of persistence diagrams, proving results of independent interest. Finally, we introduce an online algorithm to estimate a quantization of the EPD with theoretical guarantees. Interestingly, our algorithm can handle the case $p=\infty$, central in TDA, and has the advantage of not requiring hyper-parameters to account for the peculiar role played by the diagonal. 
We illustrate our results in numerical experiments and our code will be made publicly available. We believe that this work offers new perspectives to handle sample of PDs in practice and that it strengthens our understanding of statistical properties of PDs in random settings.

\clearpage
\bibliography{biblio}
\bibliographystyle{icml2021}

\clearpage
\appendix
\section*{Supplementary Material for: Estimation and Quantization of Expected Persistence Diagrams}
\section{Proofs of \cref{sec:minimax_EPD}}

We let $\mu(f)$ denote the integral of some function $f:\upperdiag\to \R$ against the measure $\mu$.
\begin{lemma}\label{lem:as_convergence}
    Let $P$ be a probability measure on $\MM^p$ such that $\E_P[\Pers_p(\mu)]<\infty$. Let $(\mu_n)_{n\geq 1}$ be a sequence of i.i.d.~variables of law $P$ and let $\overline\mu_n=\frac{1}{n}(\mu_1+\cdots+\mu_n)$. Then,
    \begin{equation}
        \Dp(\overline\mu_n,\EPD)\xrightarrow[n\to\infty]{}  0 \text{ almost surely.}
    \end{equation}
\end{lemma}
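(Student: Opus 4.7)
The natural strategy is to invoke the characterization of $\Dp$-convergence for persistence measures established in \citep[Thm.~3.7]{tda:divol2019understanding}: a sequence $(\nu_n)$ in $\MM^p$ converges to $\nu$ in $\Dp$ if and only if $\nu_n \to \nu$ vaguely on $\upperdiag$ and $\Pers_p(\nu_n)\to \Pers_p(\nu)$. Thus it suffices to check these two conditions almost surely for $\nu_n=\overline\mu_n$ and $\nu=\EPD$. Both will follow from the classical strong law of large numbers (SLLN) applied to appropriate real-valued i.i.d.~random variables, once we verify that the relevant quantities have finite expectation.

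\textbf{Vague convergence.} Fix $f\in C_c(\upperdiag)$. Its support $K=\supp(f)$ is a compact subset of $\upperdiag$, so there exists $\eta>0$ with $\|x-\thediag\|\geq \eta$ for all $x\in K$. Consequently, for any $\mu\in\MM^p$,
\begin{equation}
|\mu(f)| \leq \|f\|_\infty \mu(K) \leq \|f\|_\infty \eta^{-p} \Pers_p(\mu).
\end{equation}
By hypothesis $\E_P[\Pers_p(\mu)]<\infty$, so the real random variables $\mu_i(f)$ are integrable and, by Fubini, $\E[\mu_i(f)] = \EPD(f)$. The SLLN yields $\overline\mu_n(f) \to \EPD(f)$ almost surely. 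To obtain vague convergence on the whole space I would fix a countable dense family $(f_k)_{k\geq 1}$ in $C_c(\upperdiag)$ (the latter being separable for the inductive-limit topology, using a countable exhaustion by compacts), apply the SLLN to each $f_k$, and intersect the full-measure events to produce a single full-measure event on which $\overline\mu_n(f_k)\to\EPD(f_k)$ for all $k$. A standard density argument then extends this to every $f\in C_c(\upperdiag)$.

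\textbf{Convergence of total persistence.} The map $\mu\mapsto \Pers_p(\mu)$ is linear in $\mu$, hence
\begin{equation}
\Pers_p(\overline\mu_n) = \frac{1}{n}\sum_{i=1}^n \Pers_p(\mu_i).
\end{equation}
The variables $\Pers_p(\mu_i)$ are i.i.d.~with finite expectation equal to $\Pers_p(\EPD)$ (Fubini again). The SLLN gives $\Pers_p(\overline\mu_n)\to \Pers_p(\EPD)$ almost surely.

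\textbf{Conclusion.} Combining the two full-measure events and invoking \citep[Thm.~3.7]{tda:divol2019understanding} yields $\Dp(\overline\mu_n,\EPD)\to 0$ almost surely. The only mild technical point is the countability argument needed to go from pointwise SLLN to vague convergence of the random measures; the finite-moment hypothesis $\E_P[\Pers_p(\mu)]<\infty$ is precisely what makes both applications of the SLLN legitimate.
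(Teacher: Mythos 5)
Your proof is correct and follows essentially the same route as the paper's: apply the strong law of large numbers to $\Pers_p(\mu_i)$ and to $\mu_i(f)$ for a countable convergence-determining family in $C_c(\upperdiag)$, then invoke \citep[Thm.~3.7]{tda:divol2019understanding} to upgrade vague convergence plus convergence of total persistence to $\Dp$-convergence. The only difference is that you spell out the integrability check $|\mu(f)|\leq \|f\|_\infty\,\eta^{-p}\Pers_p(\mu)$ justifying each SLLN application, which the paper leaves implicit.
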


\begin{proof}[Proof of \cref{lem:as_convergence}]
 By the strong law of large numbers applied to the function $\|\cdot-\thediag\|^p$, we have $\Pers_p(\overline\mu_n)\to \Pers_p(\EPD)$ almost surely. Also, for any continuous function $f:\Omega\to \R$ with compact support, we have $\overline\mu_n(f)\to\EPD(f)$ almost surely. This convergence also holds almost surely for any countable family $(f_i)_i$ of functions. Applying this result to a countable convergence-determining class for the vague convergence, we obtain that $(\overline\mu_n)_n$ converges vaguely towards $\EPD$ almost surely. We conclude thanks to \citep[Thm~3.7]{tda:divol2019understanding}.
\end{proof}

Before proving Theorem \ref{thm:minimax_result}, we give a general upper bound on the distance $\Dp$ between two measures in $\MM^p$. The bound is based on a classical multiscale approach to control a transportation distance between two measures, appearing for instance in \citep{ot:singh2018minimax}. Let  $J\in \N$. For $k\geq 0$, let $B_k = \{x\in  A_L,\ \|x - \thediag\| \in (L2^{-(k+1)},L2^{-k}]\}$. The sets $\{B_k\}_{k\geq 0}$ form a partition of $A_L$.  We then consider a sequence of nested partitions $\{\SS_{k,j}\}_{j=1}^{J}$ of $B_k$, where $\SS_{k,j}$ is made of $N_{k,j}$ squares of side length $\eps_{k,j}=L2^{-(k+1)}2^{-j}$. See also Figure \ref{fig:partition_box}.  Let $\mu_{|B_k}$ be the measure $\mu$ restricted to $B_k$ and $\mu_k = \frac{\mu_{|B_k}}{\mu(B_k)}$ be the conditional probability on $B_k$. If $\mu(B_k)=0$, we let $\mu_k$ be any fixed measure, for instance the uniform distribution on $B_k$.

\begin{lemma}\label{lem:deterministic_control}
Let $\mu,\nu$ be two measures in $\MM^p$, supported on $A_L$. Then, for any $J\geq 0$, with $c_p=2^{-p/2}(1+1/(2^p-1))$,
\[
\begin{split}
    &\Dp^p(\mu,\nu) \leq 2^{p/2}L^p \sum_{k\geq 0} 2^{-kp} \Big( 2^{-Jp} (\mu(B_k)\wedge \nu(B_k))\\
    & +c_p |\mu(B_k)-\nu(B_k)|+ \sum_{\substack{1\leq j\leq J \\ S\in \SS_{k,j-1}}}2^{-jp}|\mu(S)-\nu(S)| \Big).
\end{split}
\]
\end{lemma}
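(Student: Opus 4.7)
The strategy is to exhibit an admissible plan $\pi \in \Adm(\mu,\nu)$ whose cost reproduces the three terms on the right-hand side, by combining a coarse decomposition along the annular regions $\{B_k\}$ with a multiscale coupling inside each $B_k$.

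First, since $\{B_k\}_{k \geq 0}$ partitions $A_L$, I would treat each scale independently. Inside $B_k$, the global mass imbalance $|\mu(B_k)-\nu(B_k)|$ is routed to the boundary $\thediag$: any $x \in B_k$ satisfies $\|x - \thediag\| \leq L 2^{-k}$, so this part of the plan costs at most $L^p 2^{-kp} |\mu(B_k)-\nu(B_k)|$. After combining with collapsed coarse contributions coming from the multiscale sum (via the identity $2^{p/2} c_p = 2^p/(2^p-1) = \sum_{j\geq 0} 2^{-jp}$), this matches the $c_p |\mu(B_k)-\nu(B_k)|$ piece of the stated bound.

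Second, for the common mass $\mu(B_k) \wedge \nu(B_k)$, I would design a multiscale coupling along the nested partitions $\SS_{k,0}, \SS_{k,1}, \dots, \SS_{k,J}$ of $B_k$, proceeding from coarse to fine. At scale $j$ and for each parent cell $S \in \SS_{k,j-1}$, mass is redistributed among the children of $S$ so as to equalize the $\mu$- and $\nu$-marginals on each child; the mass to be moved inside $S$ can be bounded by $|\mu(S)-\nu(S)|$ after appropriate bookkeeping (see below), and each unit of mass travels at most $\diam(S)$. Since $\diam(S)^p = 2^{p/2} L^p 2^{-kp} 2^{-jp}$ for $S \in \SS_{k,j-1}$, the scale-$j$ cost is bounded by $2^{p/2} L^p 2^{-kp} 2^{-jp} \sum_{S \in \SS_{k,j-1}} |\mu(S)-\nu(S)|$. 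At the finest scale $J$, any coupling within $S \in \SS_{k,J}$ is admissible and costs at most $\diam(S)^p \cdot (\mu(S)\wedge\nu(S))$, which sums to at most $2^{p/2} L^p 2^{-kp} 2^{-Jp}(\mu(B_k)\wedge\nu(B_k))$, matching the first term in the bound.

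The main obstacle is the design of the intra-$B_k$ multiscale construction so that the mass moved at scale $j$ within a parent cell $S \in \SS_{k,j-1}$ is bounded by $|\mu(S)-\nu(S)|$, rather than the naive $\sum_{C \text{ child of } S} |\mu(C)-\nu(C)|$, which can be much larger. One natural trick is to equalize the $\mu$- and $\nu$-masses on each $S$ at its own scale by an ancillary intra-$S$ transport, so that when recursing into the children the marginal masses are already matched; the ancillary transports across levels telescope, and each cell $S$ contributes to the cost at exactly one scale, bounded by $|\mu(S)-\nu(S)|$. Once this plan is in place, summing the cost contributions over all $k \geq 0$, all scales $j \in \{1,\dots,J\}$, and all cells is routine, and the geometric series in $j$ and $k$ assemble into the claimed constants.
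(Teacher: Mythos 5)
Your high-level strategy matches the paper's: decompose along the dyadic bands $\{B_k\}$, route the band-level imbalance $|\mu(B_k)-\nu(B_k)|$ to the diagonal at cost $\leq (L2^{-k})^p$ per unit, and control the remaining common mass via a multiscale coupling inside $B_k$. Your accounting for the constant $c_p$ (via $2^{p/2}c_p=\sum_{j\geq 0}2^{-jp}$) is also correct.

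The gap is exactly the step you flag and then wave past. You assert that an ``ancillary intra-$S$ transport'' lets you equalize $\mu$- and $\nu$-masses on each cell $S$ before recursing, so that each $S$ ``contributes to the cost at exactly one scale, bounded by $|\mu(S)-\nu(S)|$,'' but this claim is never justified and is doing all the work. Once you perturb $\mu$ and $\nu$ to equalize their masses on $S$, you have changed the masses on every child of $S$, so the child-level imbalance is no longer $|\mu(C)-\nu(C)|$; the bookkeeping for what the ancillary transports cost, and why they telescope across levels, is precisely the nontrivial content. Moreover, the dichotomy you set up---$|\mu(S)-\nu(S)|$ versus the ``naive'' $\sum_{C\subset S}|\mu(C)-\nu(C)|$---is a bit of a red herring: in a correctly indexed multiscale bound the child-level sum does appear, just at the next scale with a smaller prefactor, so nothing special needs to be done to avoid it.

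The paper sidesteps the whole difficulty differently. Inside each band it passes to the \emph{normalized} conditionals $\mu_k=\mu|_{B_k}/\mu(B_k)$ and $\nu_k=\nu|_{B_k}/\nu(B_k)$, applies a known multiscale Wasserstein bound for probability measures (Lemma~6 of \citealp{ot:singh2018minimax}) as a black box to control $W_p^p(\mu_k,\nu_k)$, and then translates back via the elementary pointwise inequality
\[
m_k|\mu_k(S)-\nu_k(S)|\leq |\mu(S)-\nu(S)|+\frac{\mu(S)\wedge\nu(S)}{\mu(B_k)\vee\nu(B_k)}\,|\mu(B_k)-\nu(B_k)|,
\]
with $m_k=\mu(B_k)\wedge\nu(B_k)$. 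Summing over $S\in\SS_{k,j-1}$ and then over $j$, the correction term is absorbed into the $c_p|\mu(B_k)-\nu(B_k)|$ piece using $\sum_{j\geq 1}2^{-jp}\leq 1/(2^p-1)$. This normalize-then-translate device is what your plan is missing: it keeps the multiscale step entirely within the balanced (Wasserstein) setting, where the greedy coupling is clean, and defers all mass-imbalance issues to a single algebraic inequality. Without something playing this role, your proposal does not close.
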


\begin{proof}
Denote by $m_k$ the quantity $\mu(B_k)\wedge\nu(B_k)$.
Let $\pi_k \in \Pi(\mu_k,\nu_k)$ be an optimal plan (in the sense of $W_p$) between the probability measures $\mu_k$ and $\nu_k$. 
If $\mu(B_k)\leq \nu(B_k)$, then $\mu(B_k)\pi_k$ transports mass between $\mu_{|B_k}$ and $\frac{\mu(B_k)}{\nu(B_k)} \nu_{|B_k}$. 
We then build an admissible plan between $\frac{\mu(B_k)}{\nu(B_k)} \nu_{|B_k}$ and $\nu_{|B_k}$ by transporting $\p{1-\frac{\mu(B_k)}{\nu(B_k)}}\nu_{|B_k}$ to the diagonal, with cost bounded by $\p{1-\frac{\mu(B_k)}{\nu(B_k)}}\nu(B_k)(L2^{-k})^p$. 
Acting in a similar way if $\nu(B_k)\leq \mu(B_k)$, we can upper bound $\Dp^p(\mu,\nu)$ by
     \begin{equation}\label{eq:cost_transport1}
\sum_{k\geq 0} \p{m_kW_p^p(\mu_k,\nu_k) + L^p2^{-kp}|\mu(B_k)-\nu(B_k)|}.
\end{equation}
Lemma 6 in \citep{ot:singh2018minimax} shows that 
\begin{equation}\label{eq:lemma_from_singh}
\begin{split}
&W_p^p(\mu_k,\nu_k)\leq 2^{p/2}L^p2^{-(k+1)p}\Big(2^{-Jp} \\
&\qquad+ \sum_{\substack{1\leq j\leq J \\ S\in \SS_{k,j-1}}}2^{-jp}  |\mu_k(S)-\nu_k(S)|\Big).
\end{split}
\end{equation}
Furthermore, one can check that for any $S \subset B_k$
\begin{align*}
&m_k|\mu_k(S)-\nu_k(S)| \leq\\
& |\mu(S) - \nu(S)| + \frac{\nu(S)\wedge \mu(S)}{\mu(B_k)\vee \nu(B_k)} |\mu(B_k)-\nu(B_k)|.
\end{align*} 
By summing over $S\in \SS_{k,j-1}$, we obtain that
\begin{equation}\label{eq:step2}
\begin{split}
&m_k\sum_{S\in \SS_{k,j-1}} |\mu_k(S)-\nu_k(S)|  \\
&\leq |\mu(B_k)-\nu(B_k)| + \sum_{S\in \SS_{k,j-1}}|\mu(S) - \nu(S)|.
\end{split}
\end{equation}
Using $\sum_{j=1}^J 2^{-pj} \leq 2^{-p}/(1-2^{-p})$, and putting together inequalities \eqref{eq:cost_transport1}, \eqref{eq:lemma_from_singh} and \eqref{eq:step2}, one obtains the inequality of Lemma \ref{lem:deterministic_control}.
\end{proof}
\begin{figure}
    \centering
    \includegraphics[width=0.7\columnwidth]{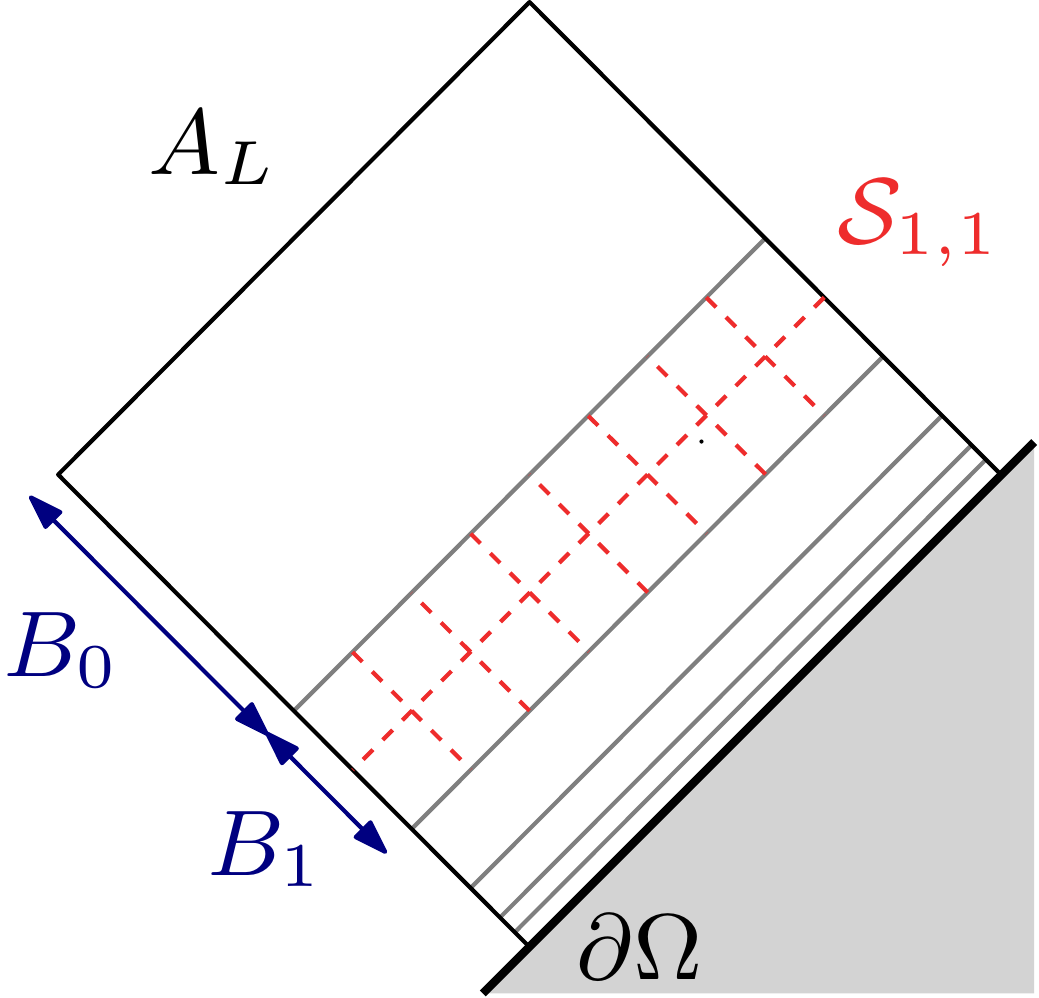}
    \caption{Partition of $A_L$ used in the proof of Theorem \ref{thm:minimax_result}}
    \label{fig:partition_box}
\end{figure}

Before proving \cref{thm:minimax_result}, we state a useful inequality. Let $\mu\in \MM^q_{M,L}$ and let $B\subset \Omega$ be at distance $\ell$ from the diagonal $\thediag$. Then,
\begin{equation}\label{eq:mass_far_from_diag}
    \mu(B) = \int_{B} \frac{\|x-\thediag\|^q}{\|x-\thediag\|^q} \dd \mu(x) \leq M\ell^{-q}.
\end{equation}

\begin{proof}[Proof of Theorem \ref{thm:minimax_result}]
Consider a distribution $P\in\PP^q_{M,L}$. Remark first that for any measure $\mu\in \MM^q_{M,L}$, we have $\mu(B_k) \leq ML^{-q}2^{kq}$  one  by \eqref{eq:mass_far_from_diag}. 
Let $\mu$ be a random persistence measure of law $P$ and $\overline\mu_n$ be the empirical EPD associated to a $n$-sample of law $P$. By the Cauchy-Schwartz inequality, given a Borel set $A\subset \upperdiag$, we have
\begin{equation}\label{eq:cauchy}
\E|\overline\mu_n(A)-\EPD(A)|  \leq \sqrt{\frac{\E[\mu(A)^2]}{n}}.
\end{equation}
The Cauchy-Schwartz inequality also yields, as $|\SS_{k,j-1}|= 2^{k+1}4^{j-1}$,
\begin{align*}
&\sum_{S\in\SS_{k,j-1}} \E|\hat \mu_n(S)-\EPD(S )| \leq \sum_{S\in\SS_{k,j-1}} \sqrt{\frac{\E[\mu(S)^2]}{n}} \\
&\qquad\leq  \sqrt{\frac{\E\left[\sum_{S\in\SS_{k,j-1}}\mu(S)^2\right]}{n} |\SS_{k,j-1}|}   \\
&\qquad\leq \sqrt{\frac{\E\left[\mu(B_k)^2\right]}{n} |\SS_{k,j-1}|}  \leq \frac{ML^{-q}2^{kq}}{\sqrt{n}} 2^{\frac{k+1}{2}}2^{j-1}.
\end{align*}
Note also that $\sum_{S\in\SS_{k,j-1}} \E|\hat \mu_n(S)-\EPD(S )|\leq 2\EPD( B_k)\leq 2ML^{-q}2^{kq}$ and that $\overline \mu_n(B_k)\wedge \EPD(B_k)\leq ML^{-q}2^{kq}$. By using those three previous inequalities, Lemma \ref{lem:deterministic_control} and inequality \eqref{eq:cauchy}, we obtain that $\E[\Dp^p(\overline\mu_n,\EPD)]$ is smaller than
\begin{align*}
    & 2^{p/2}ML^{p-q}\sum_{k\geq 0}2^{-kp}\Big(2^{-Jp}2^{kq} + \frac{c_p}{\sqrt{n}} 2^{kq} \\
&\qquad+ \sum_{j=1}^J 2^{-jp}2^{kq}\p{2\wedge \frac{ 2^{\frac{k+1}{2}}2^{j-1}}{\sqrt{n}} }  \Big) \\
&\leq c_{p,q}ML^{p-q}\Big(2^{-Jp} + \frac{1}{\sqrt{n}}+U\Big),
\end{align*}
where $U=\sum_{k\geq 0}\sum_{j=1}^J 2^{k(q-p)}2^{-jp}\p{1\wedge \frac{2^{\frac{k}{2}}2^j}{ \sqrt{n }}}$. To bound $U$, we remark that if $k\geq \log_2(n)$, then the minimum in the definition of $U$ is equal to $1$. Therefore, letting $b_J=1$ if $p>1$ and $b_J=J$ if $p=1$, we find that $U$ is smaller than
\begin{align*}
   & \sum_{k=0}^{\log_2(n)}\sum_{j=1}^J \frac{2^{k(q-p+1/2)}2^{(1-p)j}}{\sqrt{n}} + \sum_{k\geq \log_2(n)} \sum_{j=1}^J2^{-kp}2^{-jp}\\
    &\qquad\leq c_pb_J\sum_{k<\log_2(n)}\frac{2^{k(q+1/2-p)}}{\sqrt{n}} + c_p n^{-p} \\
    &\qquad\leq c_{p,q} b_J(n^{-1/2}\vee n^{q-p}).
\end{align*}
Eventually, if $p>1$, we may set $J=+\infty$ and obtain a bound of order $ML^{p-q}(n^{-1/2}+n^{q-p})$. If $p=1$, we choose $J= (q-p)(\log n)/(2p)$ to obtain a rate of order $n^{-1/2} + n^{q-p}\log n$.
\end{proof}

\begin{figure}
    \centering
    \includegraphics[width=0.7\columnwidth]{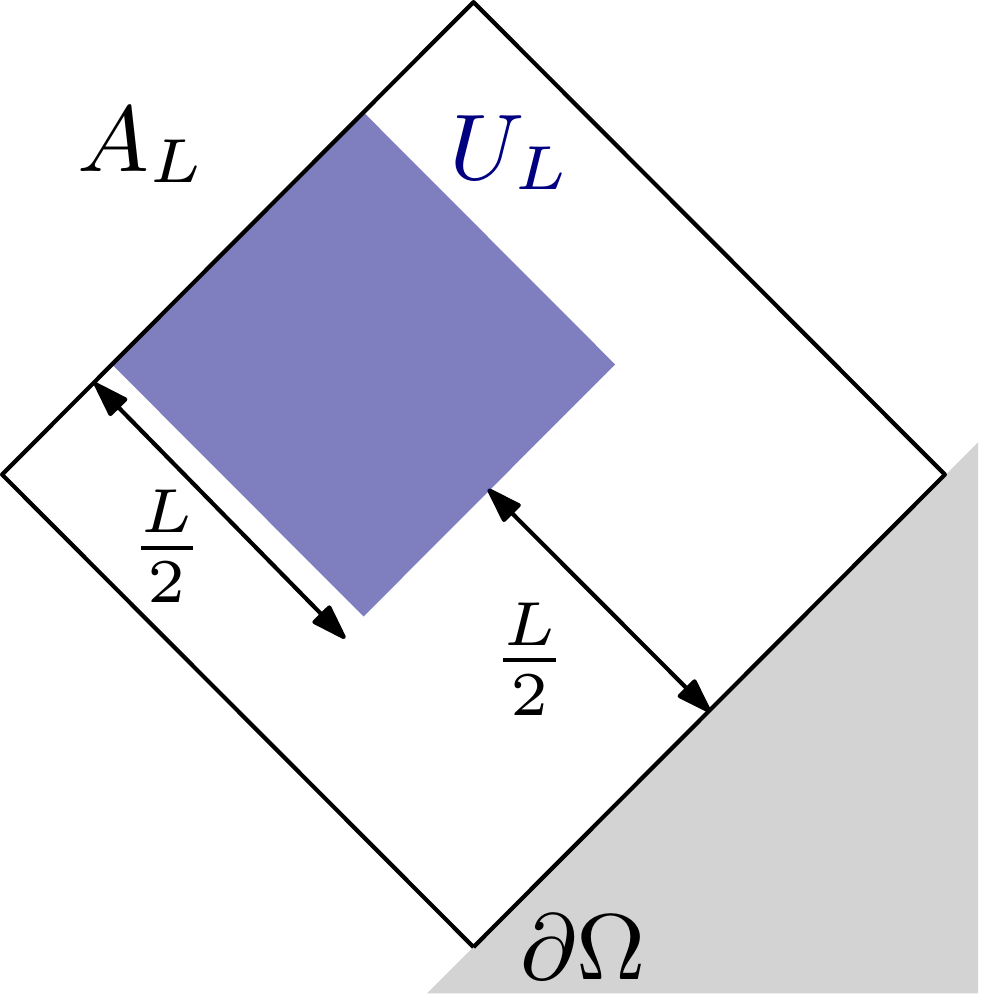}
    \caption{In the box $U_L$, the distance $\rho$ is equal to the Euclidean distance.}
    \label{fig:box_ul}
\end{figure}

\begin{proof}[Proof of Theorem \ref{thm:minimax}]
    As $\PP^{q,s}_{L,M,T}\subset\PP^q_{L,M}$, we have $\mathcal{R}_n(\PP^q_{L,M})\geq \mathcal{R}_n(\PP^{q,s}_{L,M,T})$. Therefore, Theorem \ref{thm:regularityEstimation}, whose proof is found below, directly implies Theorem \ref{thm:minimax}.
\end{proof} 

\begin{proof}[Proof of Theorem \ref{thm:regularityEstimation}]
We first consider the case $q=0$. If $\mu,\nu$ are two measures on $\upperdiag$ of mass smaller than $M$, then $\Dp(\mu,\nu) = W_{p,\rho}(\Phi(\mu),\Phi(\nu))$ \citep[Prop.~3.15]{tda:divol2019understanding}, where $\rho$ is the distance on $\tilde\upperdiag\defeq \upperdiag \cup \{\thediag\}$ defined by $\forall x,y\in \tilde\upperdiag,$
\[
\rho(x,y)=\min(\|x-y\|,d(x,\thediag)+d(y,\thediag))\]
and $\Phi(\mu)=\mu + (2M-|\mu|)\delta_{\thediag}$. Remark that $\rho(x,y)=\|x-y\|$ if $x,y\in U_L$, where $U_L\subset A_L$ is any $\ell_1$-ball of radius $L/\sqrt{8}$ at distance $L/2$ from the diagonal, see Figure \ref{fig:box_ul}. As $\Phi$ is a bijection, the minimax rates for the estimation of $\EPD$ is therefore equal to 
\[ \inf_{\Phi(\hat{\mu}_n)}\sup_{P\in \PP^{0,s}_{L,M,T}} \E[W_{p,\rho}^p(\Phi(\hat{\mu}_n),\Phi(\EPD))].\]
Let $\QQ$ be the set of probability measures on $U_L$ whose densities belong to $B^s_{p',q'}$ with associated norm smaller than $T/M$. Then, $\PP_{M,L,T}^{0,s}$ contains in particular the set of all distributions $P$ for which $\mu\sim P$ satisfies $\Phi(\mu) =M\delta_{x}$ and $x$ is sampled according to some law $\tau\in \QQ$. For such a distribution $P$, one has $\Phi(\EPD)=M\tau$, so that the minimax rate is larger than
\[ \inf_{\hat{a}_n}\sup_{\tau\in \QQ} \E[W_p^p(\hat{a}_n,M\tau)],\]
where the infimum is taken on all measurable functions based on $K$ observations of the form $M\delta_{x_i}$ with $x_1,\dots,x_n$ a $n$-sample of law $\tau \in \QQ$. Hence, we have shown that the minimax rate for the estimation of $\EPD$ with respect to $\Dp$ is larger up to a factor $M$ than the minimax rate for the estimation of $\tau \in \QQ$ given $n$ i.i.d.~observations of law $\tau$. As the minimax rate for this problem is known to be larger than $L^p/\sqrt{n}$ \citep[Thm.~5]{ot:weed2019estimation}, we obtain the conclusion in the case $q=0$.

For the general case $q>0$, we remark that if $M' = ML^{-q}$ then $\PP^{0,s}_{M',L}$ is included in $\PP^{q,s}_{M,L,T}$. In particular, the minimax rate on $\PP^{q,s}_{M,L,T}$ is larger than the minimax rate on $\PP^{0,s}_{M',L,T}$, which is larger than $c\frac{M'L^{p}}{\sqrt{n}}=c\frac{ML^{p-q}}{\sqrt{n}}$ for some constant $c>0$.
\end{proof}

\begin{remark}[Case $p=\infty$]
    It can be shown that for $p = \infty$, the minimax rate is larger than $ c_a n^{-a}$, $\forall a>0$. This is a consequence of an inequality between the $\OT_\infty$ distance and the distance between the support of the measures, for which minimax rates are known \citep{hardle1995estimation}. This means that no reasonable estimator exists on $\mathcal{P}{L,M}^\infty$: some additional conditions should be added, while standard assumptions in the support estimation literature seem artificial in our context (as in \cref{rem:rip_estimation}).
\end{remark}

\section{Delayed proofs from \cref{subsec:quantiz_generalities}}

\begin{proof}[Proof of \cref{lemma:restrictToCodebook}]
 Fix a codebook $\bc = (c_1 \dots c_k)$. Let $T_\bc : x \mapsto c_j$ if $x \in V_j(\bc)$ ($1 \leq j \leq k$) and $\projdiag(x)$ if $x \in V_{k+1}(\bc)$, where $\projdiag(x)$ denotes the orthogonal projection of a point $x \in \upperdiag$ on the diagonal $\thediag$. 
 Let $\pi$ be the pushforward of $\mu$ by the map $x \mapsto (x,T_\bc(x))$, extended on $\groundspace \times \groundspace$ by $\pi(U, \groundspace) = 0$ for $U \subset \thediag$ (intuitively, $\pi$ pushes the mass of $\mu$ on their nearest neighbor in $\{c_1 \dots c_{k+1} \}$).
 One has, for $A, B \subset \upperdiag$, $\pi(A, \groundspace) = \mu((\id, T_c)^{-1}(A, \groundspace)) = \mu(A)$, and $\pi(\groundspace, B) = \mu(T_c^{-1}(B)) = \sum_j \mu(V_j(\bc)) \ones\{c_j\in B\}$, that is $\pi$ is an admissible between the measures $\mu$ and $\sum_j \mu(V_j(\bc)) \delta_{c_j}$. Hence,
 \[
    \Dp^p\hspace{-.1cm}\p{\hspace{-.05cm} \mu, \sum_j \mu(V_j(\bc)) \delta_{c_j}\hspace{-.05cm} }\hspace{-.1cm} \leq \int_{\groundspace}  \min_{1 \leq j \leq k+1} \| x - c_j\|^p \dd \mu(x).
 \]
Let $(m_1 \dots m_k)$ be a vector of non-negative weights, let $\nu = \sum_{j=1}^k m_j \delta_{c_j}$, and $\pi$ be an admissible transport plan between $\mu$ and $\nu$. 
 One has
 \begin{align*}
    \int_{\groundspace \times \groundspace} \|x - y \|^p \dd \pi(x,y) &= \sum_{j=1}^{k+1} \int_{\groundspace} \|x - c_j\|^p \dd \pi(x,c_j) \\
    \geq &\sum_{j=1}^{k+1} \int_{\groundspace} \min_{j'} \|x - c_{j'}\|^p \dd \pi(x, c_j) \\
    \geq &\int_{\groundspace} \min_{j'} \|x - c_{j'}\|^p \dd \mu(x) \\
    \geq &\Dp^p\left( \mu, \sum_{j=1}^k \mu(V_j(\bc)) \delta_{c_j} \right).
 \end{align*}
Taking the infimum over $\pi$ gives the conclusion.
\end{proof}

We now turn to the proof of Proposition \ref{prop:existence_and_prop_of_minimizers}. For technical reasons, we extend the function $R_k$ to $\groundspace^k$, by noting that if $c_j\in \thediag$, then the Vorono\"i cell $V_j(\bc)$ is empty by definition, see \eqref{eq:def_voronoi}.

\begin{lemma}\label{lemma:centroids_are_distincts}
Let $\bc \in \groundspace^k$ be such that there exists $1\leq j \leq k$ with $\mu(V_j(\bc^*))=0$. Then, $R_k(\bc)>R_k^*$.
\end{lemma}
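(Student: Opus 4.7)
The plan is to construct a codebook $\bc'$ with $R_k(\bc') < R_k(\bc)$, which immediately yields $R_k(\bc) > R_k^*$. The construction replaces $c_j$ by a well-chosen point $x^* \in \supp(\mu)$. Since $|\supp(\mu)| \geq k$ by the standing assumption and $\{c_{j'} : j' \neq j\}$ contains at most $k-1$ distinct elements, one can select $x^* \in \supp(\mu) \setminus \{c_{j'} : j' \neq j\}$. One also needs $x^* \neq c_j$, which I verify by a short case analysis: if $c_j \in \thediag$ this is automatic since $x^* \in \upperdiag$; if $c_j = c_{j'}$ for some $j' \neq j$ it follows directly from the choice of $x^*$; otherwise $c_j \in \upperdiag$ is distinct from all other centroids, in which case $V_j(\bc)$ contains an open neighborhood of $c_j$ in $\upperdiag$, so the hypothesis $\mu(V_j(\bc))=0$ forces $c_j \notin \supp(\mu)$.

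Setting $\bc' \defeq (c_1, \ldots, c_{j-1}, x^*, c_{j+1}, \ldots, c_k)$, I pick $r > 0$ small enough that the open ball $B(x^*, r)$ stays at Euclidean distance strictly greater than $r$ from every $c_{j'}$ (for $1 \leq j' \leq k$) and from $\thediag$; this is possible because $x^* \in \upperdiag$ is distinct from all those points. The identity $R_k^p(\bc) = \int_\upperdiag m_\bc(y)^p \, \dd\mu(y)$ with $m_\bc(y) \defeq \min_{1 \leq j' \leq k+1} \|y - c_{j'}\|$, under the convention $\|y - c_{k+1}\| = \|y - \thediag\|$, follows from the partition property of the Voronoi cells and reduces the comparison of $R_k(\bc)$ and $R_k(\bc')$ to a pointwise comparison of $m_\bc$ and $m_{\bc'}$.

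On $B(x^*, r)$, the choice of $r$ gives $m_\bc(y) > r > \|y - x^*\| \geq m_{\bc'}(y)$ pointwise, and $\mu(B(x^*, r)) > 0$ because $x^* \in \supp(\mu)$. On the complement, $m_{\bc'}(y) \leq m_\bc(y)$ can fail only where $c_j$ is strictly the unique nearest centroid of $\bc$ at $y$; the very definition of $V_j(\bc)$ embeds this set into $V_j(\bc)$ (the condition for the index $j' = k+1 > j$ in \eqref{eq:def_voronoi} is exactly the strict inequality $\|y - c_j\| < \|y - \thediag\|$), so it has $\mu$-measure zero by hypothesis. Integrating then yields $R_k^p(\bc') < R_k^p(\bc)$. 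If some $c_{j'}$ ($j' \neq j$) happens to lie on $\thediag$, a brief continuity argument (perturbing those centroids slightly into $\upperdiag$ and using that $R_k$ changes by at most $o(1)$) shows $R_k^* = \inf_{\upperdiag^k} R_k \leq R_k(\bc')$ regardless. The main obstacle I anticipate is the case analysis of the first paragraph ensuring $x^* \neq c_j$, since it is where the specific geometry of our setting (the role of $\thediag$ and the tie-breaking convention defining $V_j$) enters, rather than the routine pointwise estimates.
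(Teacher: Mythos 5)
Your proof is correct, and it takes a genuinely different route from the paper's. The paper's argument, after dropping the mass-less centroid, splits into two cases: if $\mu(V_{k+1}(\bc)) > 0$, it uses inner regularity of $\mu$ to extract a compact $A \subset V_{k+1}(\bc)$ of positive mass with $\diam(A)$ small relative to $d(A,\thediag)$, places a new centroid $c' \in A$, and checks directly that the transport cost strictly improves on $A$; if instead $\mu(V_{k+1}(\bc)) = 0$, it invokes the proof of Theorem~4.1 of Graf--Luschgy for the ordinary $W_p$-quantization problem. Your proof avoids the case split entirely: the standing assumption $|\supp(\mu)| \geq k$ guarantees a support point $x^*$ distinct from all $c_{j'}$ with $j' \neq j$, and your case analysis (diagonal centroid, coincident centroids, or interior centroid with an open neighborhood in $V_j(\bc)$ forcing $c_j \notin \supp(\mu)$) shows $x^* \neq c_j$ as well; replacing $c_j$ by $x^*$ then strictly decreases the integrand on a small ball $B(x^*,r)$ of positive mass, while the only set where the integrand could increase is contained in $V_j(\bc)$, which is $\mu$-null by hypothesis. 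This is more self-contained and elementary than reproducing Graf--Luschgy's argument. Both proofs must relate the distortion of the modified codebook (which may have centroids on $\thediag$) back to $R_k^* = \inf_{\upperdiag^k} R_k$; you make this explicit via density of $\upperdiag^k$ in $\groundspace^k$ and the continuity of $R_k$ (\cref{lemma:RkContinuous}), a point the paper's write-up leaves implicit.
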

In particular, if two centroids of a codebook $\bc$ are equal or if a centroid $c_j$ of $\bc$ belongs to $\thediag$, then the condition of the above lemma is satisfied, so that the $\bc$ cannot be optimal. This proves the second part of \cref{prop:existence_and_prop_of_minimizers}.

\begin{proof}[Proof of \cref{lemma:centroids_are_distincts}]
Let $\bc=(c_1,\dots,c_k)\in \groundspace^k$. 
Assume without loss of generality that  $\mu(V_{1}(\bc)) = 0$. 
Let $\bc_0 = (c_2, \dots, c_k) \in \groundspace^{k-1}$ (that is, $\bc$ where we removed the first centroid).
Assume first that $\mu(V_{k+1}(\bc)) > 0$, that is there is some mass transported onto the diagonal. 
Consider a compact subset $A \subset V_{k+1}(\bc)$ such that $\mu(A) > 0$ and the diameter $\diam(A)$ of $A$ is smaller than the distance $d(A,\thediag)$ between $A$ and $\thediag$. 
Let $c'\in A$ and observe that, for $x\in A$, $\|x-c'\|< \|x-\thediag\|$. Therefore, 
\[ \int_A \|x - c'\|^p \dd \mu(x) < \int_A \|x - \thediag\|^p \dd \mu(x). \]
Consider the measure $\nu = \hat{\mu}(\bc_0) + \mu(A) \delta_{c'}$. Then 
\begin{align*} 
&\Dp^p(\nu,\mu) \leq \sum_{j=1}^{k} \int_{V_j(\bc)} \|x - c_j\|^p \dd \mu(x) \\
&+ \int_{V_{k+1}(\bc) \backslash A} \| x - \thediag\|^p \dd \mu(x) + \int_A \|x - c'\|^p \dd \mu(A) \\
< &R_k(\bc),
\end{align*}
thus $\bc$ cannot be optimal. We can thus assume that $\mu(V_{k+1}(\bc)) = 0$, in which case we can reproduce the proof of \citep[Thm~4.1]{graf2007foundations}, which gives that $\bc$ cannot be optimal either in that case, yielding the conclusion. 
\end{proof}

\begin{lemma}\label{lemma:RkContinuous}
$R_k$ is continuous.
\end{lemma}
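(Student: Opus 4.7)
The plan is to rewrite $R_k(\bc)^p$ as a single integral whose integrand is manifestly continuous in $\bc$ for each fixed $x$, and then invoke the dominated convergence theorem. The main (mild) obstacle is handling the ``diagonal cell'' $V_{k+1}(\bc)$ uniformly with the other cells; this is resolved by adopting the convention $\|x-c_{k+1}\|\defeq \|x-\thediag\|$ already used in \eqref{eq:def_voronoi}.

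First, using the convention above and the partition property of $V_1(\bc),\dots,V_{k+1}(\bc)$, I would observe that for every $x\in V_j(\bc)$ one has $\|x-c_j\|=\min_{1\leq j'\leq k+1}\|x-c_{j'}\|$ by the very definition of the Voronoï cells (and this remains valid when some $c_j$ lies on $\thediag$, since then $V_j(\bc)=\emptyset$ and the minimum is attained at $c_{k+1}$ anyway). Hence equation \eqref{eq:quantiz_in_PD_space} rewrites as the single integral
\begin{equation*}
    R_k(\bc)^p \;=\; \int_{\upperdiag} g(\bc,x)\,\dd\mu(x), \qquad g(\bc,x)\defeq\min_{1\leq j\leq k+1}\|x-c_j\|^p.
\end{equation*}

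Next, I would verify the two ingredients needed for dominated convergence. For every fixed $x\in\upperdiag$, the map $\bc\mapsto g(\bc,x)$ is continuous on $\groundspace^k$ as a finite minimum of the continuous functions $\bc\mapsto\|x-c_j\|^p$ (the term $\|x-c_{k+1}\|^p=\|x-\thediag\|^p$ does not depend on $\bc$). Moreover, since the minimum in the definition of $g$ always includes $c_{k+1}$, we have the pointwise bound $g(\bc,x)\leq \|x-\thediag\|^p$, and the right-hand side is $\mu$-integrable because $\mu\in\MM^p$, \ie $\Pers_p(\mu)<\infty$.

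Finally, given any sequence $\bc^{(n)}\to\bc$ in $\groundspace^k$, the pointwise convergence $g(\bc^{(n)},x)\to g(\bc,x)$ together with the uniform domination by $\|x-\thediag\|^p$ yields, via Lebesgue's dominated convergence theorem, $R_k(\bc^{(n)})^p\to R_k(\bc)^p$. Since $t\mapsto t^{1/p}$ is continuous on $\R_+$, this gives $R_k(\bc^{(n)})\to R_k(\bc)$ and hence the continuity of $R_k$ on $\groundspace^k$, which is what was to be proved. Combined with \cref{lemma:centroids_are_distincts} and the coercivity of $R_k$ at infinity, this continuity is the ingredient needed to deduce existence of minimizers and thereby complete the proof of \cref{prop:existence_and_prop_of_minimizers}.
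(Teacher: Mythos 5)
Your argument is correct and is essentially the paper's proof, only spelled out in more detail: both rewrite $R_k^p$ as the integral of $\min_{1\le j\le k+1}\|x-c_j\|^p$, observe this integrand is continuous in $\bc$ and dominated by $\|x-\thediag\|^p$ (which is $\mu$-integrable since $\Pers_p(\mu)<\infty$), and conclude by dominated convergence. No gap; your extra remarks about tie-breaking in the Vorono\"i cells and about $t\mapsto t^{1/p}$ are correct bookkeeping the paper leaves implicit.
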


\begin{proof}[Proof of \cref{lemma:RkContinuous}]
	For a given $x \in \groundspace$, the map $\bc \mapsto \min_i \|x - c_i\|^p$ is continuous and upper bounded by $\|x - \thediag\|^p$. Thus, $R_k$ is continuous by dominated convergence as we have finite $\Pers_p$.
\end{proof}

\begin{lemma}\label{lemma:aux_minimizer}
Let $0\leq \lambda < R_{k-1}^*$. Then, the set $\{\bc\in \groundspace^k,\ R_k(\bc) \leq \lambda \}$ is compact.
\end{lemma}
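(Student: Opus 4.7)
The plan is to handle closedness and boundedness of the sublevel set $S_\lambda \defeq \{\bc \in \groundspace^k : R_k(\bc) \leq \lambda\}$ separately. Closedness is immediate from \cref{lemma:RkContinuous}, so the entire difficulty is showing that $S_\lambda$ is sequentially compact, i.e., that no sequence with bounded distortion can escape to infinity. I will argue by contradiction: pick a sequence $(\bc^{(n)})_n \subset S_\lambda$ and, after passing to a subsequence, assume that each coordinate $c_j^{(n)}$ either converges to some $c_j^* \in \groundspace$ (indices $j \in J$) or satisfies $\|c_j^{(n)}\| \to \infty$ (indices $j \in J^c$). Writing $\bc_0^{(n)} \defeq (c_j^{(n)})_{j \in J}$, which converges to some $\bc_0^* \in \groundspace^{|J|}$, it will suffice to rule out $J^c \neq \emptyset$.

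\textbf{Key technical step.} The crux is to show that centroids escaping to infinity do not contribute to the distortion asymptotically:
\[ R_k(\bc^{(n)})^p - R_{|J|}(\bc_0^{(n)})^p \xrightarrow[n\to\infty]{} 0. \]
For fixed $x \in \upperdiag$ and any $j \in J^c$, $\|x - c_j^{(n)}\| \to \infty$; in particular, for $n$ large, $\|x - c_j^{(n)}\| > \|x - \thediag\|$, so $c_j^{(n)}$ cannot be the nearest element of $\bc^{(n)} \cup \{\thediag\}$ to $x$. Hence the two integrands $\min_{1\leq j \leq k+1} \|x - c_j^{(n)}\|^p$ and $\min_{j \in J \cup \{k+1\}} \|x - c_j^{(n)}\|^p$ coincide pointwise for $n$ large, and both are dominated by $\|x - \thediag\|^p$, which is $\mu$-integrable since $\mu \in \MM^p$. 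Dominated convergence then yields the claim.

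\textbf{Conclusion.} Combining the key step with $R_k(\bc^{(n)}) \leq \lambda$ and the continuity $R_{|J|}(\bc_0^{(n)}) \to R_{|J|}(\bc_0^*)$ gives $R_{|J|}(\bc_0^*) \leq \lambda$. On the other hand, the map $m \mapsto R_m^*$ is non-increasing (adding a centroid can only decrease the per-point assignment distance, hence the distortion), and any centroids of $\bc_0^*$ lying on $\thediag$ have empty Voronoï cells and may be dropped without altering $R_{|J|}(\bc_0^*)$; if $|J| \leq k-1$, this yields $R_{|J|}(\bc_0^*) \geq R_{|J|}^* \geq R_{k-1}^* > \lambda$, contradicting the above. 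Hence $J^c = \emptyset$, whence $\bc^{(n)} \to \bc^* \in \groundspace^k$ along the subsequence and $R_k(\bc^*) \leq \lambda$ by continuity.

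\textbf{Main obstacle.} The genuinely delicate step is the dominated convergence argument above. It crucially relies on the ``diagonal safety net'' $\|x - c_{k+1}\| = \|x - \thediag\|$ to uniformly bound the per-point contribution to the distortion, independently of the codebook $\bc$. This is precisely why the right hypothesis for our quantization problem is finite total persistence $\Pers_p(\mu) < \infty$ rather than finite total mass, in contrast with the classical Wasserstein setup of \citep{graf2007foundations,chazal2020optimal}---an adaptation well-suited to persistence measures that may have infinite total mass.
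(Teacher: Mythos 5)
Your proof is correct, and it takes a genuinely different route than the paper's. The paper shows boundedness \emph{directly and explicitly}: it chooses a radius $L$ (depending on $\lambda$, $\mu$ and $R_{k-1}^*$) such that the mass of $\|\cdot-\thediag\|^p\dd\mu$ outside $A_L$ is less than $R_{k-1}^*-\lambda$, and then shows by a short computation that any codebook with a centroid outside $A_{2L}$ would have distortion $> \lambda$, because that centroid's cell can only lie in $A_L^c$ and could be re-routed to the diagonal for a cost below $R_{k-1}^*$. In contrast, you argue by sequential compactness: after extracting, you split the centroid indices into a convergent part $J$ and an escaping part $J^c$, and use dominated convergence (with $\|x-\thediag\|^p$ as the dominating function, precisely the ``diagonal safety net'' the paper also exploits) to show the escaping centroids contribute nothing in the limit, so that the limiting $|J|$-codebook achieves distortion $\leq \lambda < R_{k-1}^* \leq R_{|J|}^*$, a contradiction when $|J|<k$. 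Both arguments hinge on the same geometric fact---a far-away centroid is always beaten by $\thediag$, so dropping it collapses to a codebook of size $\leq k-1$---but the paper's version is constructive and yields a concrete ball $A_{2L}$ containing the sublevel set, whereas yours is shorter and more conceptual at the price of being non-quantitative. One minor remark: the ``dropping centroids on $\thediag$'' step in your conclusion is unnecessary, since $R_{|J|}(\bc_0^*)\geq R_{|J|}^*$ holds by definition regardless of whether $\bc_0^*$ has diagonal centroids.
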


\begin{proof}[Proof of \cref{lemma:aux_minimizer}] 
Fix $\lambda < R_{k-1}^*$. 
The set is closed by continuity of $R_k$, so that it suffices to show that it is bounded. Let $\bc$ be such that $R_k(\bc) \leq \lambda$. 
Pick $L$ such that $\int_{A_L} \|x - \thediag\|^p \dd \mu(x) \geq \lambda$ and $\int_{A_{L}^c} \|x - \thediag\|^p \dd \mu(x) < R_{k-1}^* - \lambda$. 
Such a $L$ exists since $\int_\upperdiag \|x - \thediag\|^p \dd \mu(x) = \Pers_p(\mu)=R_0^*\geq R_{k-1}^*$. 
Then, all the $c_j$s must be in $A_{2L}$. Indeed, assume without loss of generality that $c_1 \in A_{2L}^c$. Then $V_1(\bc) \subset A_{L}^c$, as any point in $A_{L}$ is closer to the diagonal than to $c_1$. Therefore,
\begin{align*}
	R_{k-1}^* \leq &\sum_{j=2}^{k+1} \int_{V_j(\bc)} \| x - c_j\|^p \dd \mu(x) \\
	&+ \int_{V_1(\bc)} \min_{j \in \{2 \dots k+1\}} \|x - c_j\|^p \dd \mu(x) \\
	\leq &R_k(\bc) + \int_{V_1(\bc)} \|x - \thediag\|^p \dd \mu(x) \\
	\leq &R_k(\bc) + \int_{A_L^c} \| x - \thediag\|^p \dd \mu(x) \\
	< &\lambda + R_{k-1}^* - \lambda = R_{k-1}^*,
\end{align*}
leading to a contradiction.
\end{proof}

\begin{proof}[Proof of \cref{prop:existence_and_prop_of_minimizers}]
We show by recursion on $0\leq m \leq k$ that $R_m^* <R_{m-1}^*$ and that $\bC_m$ is a non-empty compact set (with the convention $R_{-1}^*=+\infty$. The initialization holds as $R_0^*= \Pers_p(\mu)<+\infty$ with the empty codebook being optimal. We now prove the induction step. Let $\bc =(c_1,\dots,c_{m-1})\in \bC_{m-1}$. Consider $\bc'=(c_1,c_1,c_2,\dots,c_{m-1})$. 
Then, $\mu(V_1(\bc'))=0$, so that $R_{m-1}^* = R_{m-1}(\bc)=R_m(\bc')>R_m^*$ by Lemma \ref{lemma:centroids_are_distincts}. Furthermore, pick $\lambda \in (R_m^*, R_{m-1}^*)$. Then, $R_m^*$ is equal to the infimum of $R_m$ on the set $\{\bc\in \groundspace^k,\ R_m(\bc) \leq \lambda \}$, which is compact according to Lemma \ref{lemma:aux_minimizer}. As the function $R_k$ is continuous, the set of minimizers $\bC_m$ is a non-empty compact set, concluding the induction step.
\end{proof}

\begin{proof}[Proof of \cref{coro:positive_quantities}]
The quantities being minimized in the definitions of $\Dmin$ and $\mmin$ are both continuous functions of $\bc^*$. As the set $\bC_k$ is compact, the minima are attained, and cannot be equal to $0$ according to \cref{prop:existence_and_prop_of_minimizers}. 
\end{proof}

\section{Proof of \cref{thm:online_quantiz}.}
In the following, we fix a distribution $P$ supported on $\MM^p_{L,M}$ and we consider $\bc^*$ be an optimal codebook of $\EPD$. 
The different constants encountered in this section all depend on the parameters $p,L, M, k, \Dmin$ and $\mmin$. In particular, we introduce the quantity
\begin{equation*}
    \begin{split}
        &\mmax \defeq \sup_{\mu\in \MM_{L,M}^p}\sup_{1\leq j\leq k} \mu(V_j(\bc^*)).
    \end{split}
\end{equation*}
 Note that 
 $\mmax  \leq \frac{2^pM}{\Dmin^p}$ as $\int_{V_j(\bc^*)}\dd\mu(x) \leq \frac{2^p}{\Dmin^{p}}\int_{V_j(\bc^*)} \| x - \thediag\|^p\dd\mu(x)$. 
 
The proof of Theorem \ref{thm:online_quantiz} follows the proof of \citep[Thm.~5]{chazal2020optimal}. As a first step, we show that it is enough to prove the following lemma, which relates the loss of $\bc^{(t)}$ and the loss of $\bc^{(t+1)}$. 

\begin{lemma}\label{lem:key_lemma}
There exists $R_0>0$ such that, if $\|c^{(0)}_j -c_j^*\|\leq R_0$ for $1\leq j\leq k$, then
\[ \E\| \bc^{(t+1)} - \bc^* \|^2 \leq \p{1-\frac{C_0}{t+1}}\E\| \bc^{(t)} - \bc^* \|^2 + \frac{C_1}{(t+1)^2},\]
for some constants $C_0>1$, $C_1>0$.
\end{lemma}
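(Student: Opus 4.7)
The plan is to derive the recursion directly from the algorithm's update rule by expanding $\|\bc^{(t+1)} - \bc^*\|^2$. Writing the update as $\bc^{(t+1)} = \bc^{(t)} - \frac{1}{t+1} h_t$ with $h_t^{(j)} = \frac{\mu(V_j(\bc^{(t)}))}{\mu'(V_j(\bc^{(t)}))}(c_j^{(t)} - v_2(\bc^{(t)}, \mu)_j)$ where $\mu = \overline\mu_{t+1}^{(1)}$ and $\mu' = \overline\mu_{t+1}^{(2)}$, I expand
\begin{equation*}
\|\bc^{(t+1)} - \bc^*\|^2 = \|\bc^{(t)} - \bc^*\|^2 - \tfrac{2}{t+1}\langle h_t, \bc^{(t)} - \bc^*\rangle + \tfrac{1}{(t+1)^2}\|h_t\|^2,
\end{equation*}
and take conditional expectation with respect to the $\sigma$-algebra $\mathcal{F}_t$ generated by past iterates. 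It then suffices to establish (i) the local strong-monotonicity bound $\langle \E[h_t \mid \mathcal{F}_t], \bc^{(t)} - \bc^*\rangle \geq C_0' \|\bc^{(t)} - \bc^*\|^2$ with $C_0' > 1/2$, and (ii) the uniform variance bound $\E[\|h_t\|^2 \mid \mathcal{F}_t] \leq C_1$, both valid when $\bc^{(t)}$ lies in a fixed neighborhood of $\bc^*$.

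For (i), the key is the $p=2$ identity $\mu(V_j)(c_j - v_2(\bc, \mu)_j) = \int_{V_j}(c_j - x) \dd\mu$, linear in $\mu$, combined with the conditional independence of $\mu$ and $\mu'$ given $\mathcal{F}_t$ (ensured by the batch-splitting $B_t = B_t^{(1)} \sqcup B_t^{(2)}$). This yields $\E[h_t^{(j)} \mid \mathcal{F}_t] = \E[1/\mu'(V_j(\bc^{(t)}))\mid \mathcal{F}_t] \cdot \int_{V_j(\bc^{(t)})}(c_j^{(t)} - x)\dd\EPD(x)$, so the expected update is (up to a positive scalar) the gradient of the expected distortion $F(\bc) \defeq \frac{1}{2}\Dp^2(\hat\mu(\bc), \EPD)$, with $\nabla F(\bc^*) = 0$ by optimality (the first-order condition is exactly $c_j^* = v_2(\bc^*, \EPD)_j$). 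To upgrade this to strong monotonicity of $\nabla F$ near $\bc^*$, I exploit the margin condition: the symmetric difference $V_j(\bc) \triangle V_j(\bc^*)$ is contained in an $O(\|\bc - \bc^*\|)$-tube around the bisector set $N(\bc^*)$, whose $\EPD$-mass is $O(\lambda \|\bc - \bc^*\|)$. A first-order Taylor-type expansion then gives $\langle \nabla F(\bc), \bc - \bc^*\rangle \geq (\mmin - O(\lambda))\|\bc - \bc^*\|^2$, and a concentration estimate on $\E[1/\mu'(V_j(\bc^{(t)}))\mid \mathcal{F}_t]$ (using $\EPD(V_j(\bc^{(t)})) \geq \mmin/2$) converts this into the required inequality provided $\lambda$ is small enough.

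For (ii), the numerator of $h_t$ has norm $O(L)$ since both $c_j^{(t)}$ and $v_2(\bc^{(t)}, \mu)_j$ lie in the bounded box $A_L$, so the task reduces to controlling the denominator $\mu'(V_j(\bc^{(t)}))$. A truncation argument shows $\mu'(V_j(\bc^{(t)})) \geq \mmin/4$ except on an event of small probability, on which a crude deterministic lower bound suffices to yield the constant $C_1$. The main obstacle is precisely this denominator: in contrast to standard stochastic-gradient analyses, $1/\mu'$ can be arbitrarily large on small-probability events, so the splitting into independent halves and a delicate tail bound are unavoidable. Closing the induction additionally requires verifying that $\bc^{(t+1)}$ stays in the neighborhood where (i) and (ii) hold, which is ensured by choosing $R_0$ small enough and leveraging the $1/(t+1)$ step-size. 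The overall scheme mirrors \citep[Thm.~5]{chazal2020optimal}, with the simplification that the ``diagonal cell'' $V_{k+1}(\bc)$ plays no active role in the update (no centroid is attached to it) and the natural $\OT_2$ constants $\mmin, \mmax$ replace the total-mass bounds used there.
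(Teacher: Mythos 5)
Your proposal takes essentially the same route as the paper, which defers the main expansion and induction to \citep[Lemma 21]{chazal2020optimal} and supplies exactly two ingredients: the margin-driven local Lipschitz bound for $p_j$ and $w_2(\cdot,\EPD)$ (Lemma \ref{lem:key_lemma1}, corresponding to your step (i)), and the Bernstein/bounded-difference concentration for $\hat p_j$ and $w_2(\bc,\overline\mu_n)$ (Lemma \ref{lem:key_lemma2}, corresponding to your step (ii)). Your expansion of $\|\bc^{(t+1)}-\bc^*\|^2$, the use of batch-splitting for conditional independence, the identity $\mu(V_j)(c_j - v_2(\bc,\mu)_j) = \int_{V_j}(c_j-x)\,\dd\mu$, and the first-order condition $c_j^* = v_2(\bc^*,\EPD)_j$ are all correct and are the right skeleton of the deferred argument.

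One point to flag: your last sentence claims that keeping $\bc^{(t)}$ in the neighborhood where (i) and (ii) hold ``is ensured by choosing $R_0$ small enough and leveraging the $1/(t+1)$ step-size.'' This is too optimistic. Since $\sum_t 1/(t+1)$ diverges, the step sizes alone cannot confine the iterates; noise can in principle push them out. In \citep{chazal2020optimal} this is exactly where the deferred ``tedious computations'' live: one needs a genuine high-probability confinement (escape-time) argument, typically exploiting the contraction provided by (i) together with the concentration from (ii) to show the iterates remain $R_0$-close for all $t$ except on an exponentially small event whose contribution is absorbed into $C_1$. Your overall structure is the paper's structure; just be careful not to present the confinement step as following automatically from the step-size schedule.
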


\begin{proof}[Proof of \cref{thm:online_quantiz}]
    From Lemma \ref{lem:key_lemma}, we show by induction that $u_t \defeq \E\| \bc^{(t)} - \bc^* \|^2$ satisfies $u_t \leq \frac{\alpha}{t+1}$ for $\alpha=C_1/(C_0-1)$. This concludes the proof as $T$ is of order $n/\log(n)$. 
    The initialization holds by assumption as long as $R_0\leq \alpha$, whereas we have by induction 
    \begin{align*}
            u_{t+1}&\leq \p{1-\frac{C_0}{t+1}}\frac{\alpha}{t+1} + \frac{C_1}{(t+1)^2}\\
            &\leq \frac{\alpha}{(t+1)^2}\p{t+1-C_0+C_1/\alpha} =\frac{\alpha t}{(t+1)^2},
    \end{align*}
    which is smaller than $\alpha/(t+2)$.
\end{proof}
The proof of Lemma \ref{lem:key_lemma} is a close adaptation  of   \citep[Lemma 21]{chazal2020optimal}. 
The proof of the latter contains tedious computations (that we do not reproduce here) which can be adapted \textit{mutatis mutandis} to our setting once the two following key results are shown. 
Given a codebook $\bc$, we let $p_j(\bc) = \EPD(V_j(\bc))$ and similarly, given a $n$-sample $\mu_1,\dots,\mu_n$ of law $P$, we let $\hat p_j(\bc)= \overline\mu_n(V_j(\bc))$. 
Note that if $\|\bc - \bc^*\|$ is small enough, one has $p_j(\bc) \leq 2 \mmax$.
Also, we let $w_p(\bc,\mu)_j\defeq \mu(V_j(\bc))v_p(\bc,\mu)_j$ for $\mu\in \MM^p$ and $1\leq j\leq k$. 
Recall that we assume that the EPD $\EPD$ satisfies the margin condition (\cref{def:margin}) with parameters $\lambda$ and $r_0$ around the optimal codebook $\bc^*$.

\begin{lemma}[Lemma 22 in \citep{chazal2020optimal}]\label{lem:key_lemma1}
	Let $R_0$ be small enough respect to $r_0 \Dmin^2/L^2$ and 
	let $\bc$ be such that $\| \bc - \bc^*\| \leq R_0$. Then, we have
	\begin{equation*}
\sum_{j=1}^k |p_j(\bc) - p_j(\bc^*)| \leq 2 \lambda r_0,
	\end{equation*}
	and 
	\[ \| w_2(\bc,\EPD)-w_2(\bc^*,\EPD)\| \leq 7\sqrt{2}\lambda\frac{ L^3}{\Dmin^2}\|\bc-\bc^*\|.\]
	\label{lemma:gradient_lipschitz}
\end{lemma}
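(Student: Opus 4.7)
Both estimates rest on a single geometric fact: for $\bc$ close to $\bc^*$, each symmetric difference $V_j(\bc)\triangle V_j(\bc^*)$ is contained in a narrow tube around $N(\bc^*)$, specifically in $N(\bc^*)^{\alpha\|\bc-\bc^*\|}$ with $\alpha$ of order $L/\Dmin$. To establish this I would take $x\in V_j(\bc^*)\cap V_{j'}(\bc)$ with $j\neq j'$ (the reverse inclusion is symmetric). Writing $\epsilon_j=c_j-c_j^*$ so that $\|\epsilon_j\|\leq \delta \defeq \|\bc-\bc^*\|$, the expansion
\[
\|x-c_j\|^2-\|x-c_j^*\|^2=-2\langle x-c_j^*,\epsilon_j\rangle+\|\epsilon_j\|^2,
\]
combined with its analog for $c_{j'}$ and the two defining inequalities $\|x-c_j^*\|\leq\|x-c_{j'}^*\|$ and $\|x-c_{j'}\|\leq\|x-c_j\|$, yields $0\leq \|x-c_{j'}^*\|^2-\|x-c_j^*\|^2\leq K_1 L \delta$ for a constant $K_1$. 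Dividing by $2\|c_j^*-c_{j'}^*\|\geq 2\Dmin$ shows that $x$ is at distance at most $\alpha\delta=K_1 L\delta/(2\Dmin)$ from the $(c_j^*,c_{j'}^*)$-bisector, hence $x\in N(\bc^*)^{\alpha\delta}$. The case $j'=k+1$ proceeds with $c_{k+1}=\thediag$ unchanged ($\epsilon_{k+1}=0$); the only subtlety is translating the algebraic bound $|\|x-c_j^*\|^2-\|x-\thediag\|^2|\leq K_1 L\delta$ into a distance to the parabolic boundary, which I would do by factoring the left-hand side as $(\|x-c_j^*\|-\|x-\thediag\|)(\|x-c_j^*\|+\|x-\thediag\|)$ and using that the sum is at least $\Dmin$ (since $\|c_j^*-\thediag\|\geq\Dmin$ by \cref{coro:positive_quantities}).

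Given this tube inclusion, the first bound follows immediately. Choose $R_0$ so that $\alpha R_0\leq r_0$; then $\alpha\delta\leq r_0$ and the margin condition yields $\EPD(N(\bc^*)^{\alpha\delta})\leq \lambda r_0$. Since each point of the overall symmetric difference lies in at most two cell-symmetric differences (the one it leaves and the one it joins),
\[
\sum_{j=1}^k |p_j(\bc)-p_j(\bc^*)| \leq \sum_{j=1}^k \EPD(V_j(\bc)\triangle V_j(\bc^*))\leq 2\EPD(N(\bc^*)^{\alpha\delta})\leq 2\lambda r_0.
\]

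For the second bound I would write
\[
w_2(\bc,\EPD)_j - w_2(\bc^*,\EPD)_j = \int_\upperdiag x\bigl(\ones_{V_j(\bc)}(x)-\ones_{V_j(\bc^*)}(x)\bigr)\,\dd\EPD(x),
\]
whose integrand is supported on $V_j(\bc)\triangle V_j(\bc^*)$. Bounding $\|x\|\leq O(L)$ on $A_L$ and applying Cauchy--Schwarz to turn
\[
\sum_j \EPD(V_j(\bc)\triangle V_j(\bc^*))^2\leq\Bigl(\sum_j \EPD(V_j(\bc)\triangle V_j(\bc^*))\Bigr)\cdot\max_j \EPD(V_j(\bc)\triangle V_j(\bc^*))
\]
into the product of the $\ell^1$- and $\ell^\infty$-norms of the vector $(\EPD(V_j(\bc)\triangle V_j(\bc^*)))_j$---each bounded by $O(L\lambda\delta/\Dmin)$ via the tube inclusion and the margin condition---gives a bound of order $\lambda L^2\delta/\Dmin$. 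Careful bookkeeping, exploiting that the relevant $x$ lie in a tube of width $O(L\delta/\Dmin)$ rather than spreading throughout $A_L$ (and recentering the integrand around the bisector before taking norms), brings in the additional $L/\Dmin$ factor and recovers the announced $7\sqrt{2}\,\lambda L^3\delta/\Dmin^2$.

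The main obstacle is the geometric tube inclusion, particularly the handling of the parabolic boundary between $V_{k+1}(\bc)$ and the linear cells, which has no counterpart in the setting of Chazal et al.\ where all bisectors are flat; this is precisely where the factoring trick mentioned above replaces the usual hyperplane-distance computation. Once the tube inclusion is secured, both estimates reduce to the margin condition combined with elementary integration bounds.
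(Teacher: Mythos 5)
Your overall structure matches the paper: establish a tube inclusion of the cell symmetric differences inside a neighborhood of $N(\bc^*)$, then apply the margin condition (the paper's Lemma \ref{lemma:dependence_cells_on_centroid} plays exactly the role of your "tube inclusion"). The first estimate follows soundly from any valid tube radius $\alpha\delta\leq r_0$, so that part is fine.

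The gap is in the parabolic case of the tube inclusion. Your factoring step correctly gives $\bigl|\|x-c_j^*\|-\|x-\thediag\|\bigr|\leq K_1 L\delta/\Dmin$, but this quantity is \emph{not} the geometric distance to the parabola $\{y:\|y-c_j^*\|=\|y-\thediag\|\}$, and the conversion is not free. Setting $g(y)=\|y-\thediag\|-\|y-c_j^*\|$ and writing $y=(a,b)$ with $b=\|y-\thediag\|$ and $c_j^*=(a^*,b^*)$, one computes $\|\nabla g(y)\|^2 = 2\bigl(1 - (b-b^*)/\|y-c_j^*\|\bigr)$, which on the parabola equals $2b^*/b$. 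Since $b$ can be as large as $O(L)$ while $b^*\geq\Dmin$, this gradient is only bounded below by $O(\sqrt{\Dmin/L})$: the level sets of $g$ are stretched relative to the Euclidean metric far from the parabola's axis, and the true distance $d(x,\{g=0\})$ exceeds $|g(x)|$ by a factor up to order $L/\Dmin$. Chaining that factor with your bound on $|g(x)|$ yields a tube radius of order $L^2\delta/\Dmin^2$, which is precisely what the paper's Lemma \ref{lemma:dependence_cells_on_centroid} gets via the explicit parametrization $t\mapsto (t-a)^2/(2b)+b/2$ and the bound $|\partial f/\partial b|\leq \tfrac12+2L^2/\Dmin^2$. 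Your claim that $\alpha=O(L/\Dmin)$ is therefore too small for the diagonal cell, and the discrepancy you later notice (the missing $L/\Dmin$) is located here, in the tube inclusion, not in the integral bookkeeping or in any "recentering around the bisector" step. With the corrected $\alpha\sim L^2/\Dmin^2$, the paper's second estimate then follows directly from the triangle inequality (no Cauchy--Schwarz needed): $\|w_2(\bc,\EPD)-w_2(\bc^*,\EPD)\|\leq \sqrt{2}L\sum_j\EPD(V_j(\bc)\triangle V_j(\bc^*))\leq 2\sqrt{2}L\lambda\alpha\delta$, giving exactly $7\sqrt{2}\lambda L^3\delta/\Dmin^2$.
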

As $w_2(\bc^*,\EPD)_j=p_j(\bc^*)\bc^*_j$, Lemma \ref{lem:key_lemma1} indicates that the application $w_2(\cdot,\EPD)$ is Lipschitz continuous around an  optimal codebook $\bc^*$, a key property to show the convergence of the sequence $(\bc^{(t)})_t$. 

\begin{lemma}[Lemma 24 in \citep{chazal2020optimal}]\label{lem:key_lemma2}
	Let $\bc$ be a codebook such that $\hat{p}_j(\bc) \leq 2 \mmax$ (which is always possible if $\|\bc - \bc^*\|$ is small enough). 
	Then, with probability larger than $1 - 2 k e^{-x}$, we have, for all $1\leq j\leq k$,
	\begin{equation}\label{eq:bernstein}
	|\hat{p}_j(\bc) - p_j(\bc)| \leq \sqrt{\frac{4 \mmax p_j(\bc) x}{n}} + \frac{2\mmax x}{3n}.
	\end{equation}
	Moreover, with probability larger than $1 - e^{-x}$, we have
	\begin{equation}\label{eq:transportoncell}
	\|w_2(\bc,\overline\mu_n)-w_2(\bc,\EPD)\|\leq 2 \mmax L \sqrt{\frac{2k}{n}} \p{ 1 + \sqrt{\frac{x}{2}}}.
	\end{equation}
\end{lemma}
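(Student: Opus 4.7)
The plan is to establish the two displayed inequalities independently---via scalar and vector-valued concentration arguments respectively---and combine them through a union bound where needed.

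For the Bernstein-type bound \eqref{eq:bernstein}, I would fix $1 \leq j \leq k$ and apply the classical Bernstein inequality to the i.i.d.\ bounded scalar random variables $X_i = \mu_i(V_j(\bc))$, which have common mean $p_j(\bc)$. The hypothesis $\hat p_j(\bc) \leq 2\mmax$---more precisely, the proximity of $\bc$ to $\bc^*$ that it encodes---propagates to a deterministic bound $X_i \leq 2\mmax$ almost surely by a continuity/perturbation argument on the Voronoï cells together with the definition of $\mmax$. Consequently, the second-moment control $\E[X_i^2] \leq 2\mmax \cdot p_j(\bc)$ holds, and Bernstein's inequality applied to $\hat p_j(\bc) - p_j(\bc) = n^{-1}\sum_i (X_i - p_j(\bc))$ gives the desired tail bound with probability at least $1 - 2e^{-x}$. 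A union bound over $j \in \{1, \ldots, k\}$ then yields \eqref{eq:bernstein} with probability at least $1 - 2ke^{-x}$.

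For the vector-valued bound \eqref{eq:transportoncell}, I would view $(w_2(\bc, \mu_i)_j)_{j=1}^k$ as an $\R^{2k}$-valued random vector $Y_i$ whose blocks are $\int_{V_j(\bc)} x \, \dd\mu_i(x)$. By linearity of the integral, $n^{-1}\sum_i Y_i = w_2(\bc, \overline\mu_n)$ and $\E Y_1 = w_2(\bc, \EPD)$, so the quantity to control is
\[
    f(\mu_1, \ldots, \mu_n) \defeq \left\| n^{-1}\sum_i Y_i - \E Y_1 \right\|.
\]
A block-wise estimate gives $\|Y_i^{(j)}\| \leq \mu_i(V_j(\bc)) \cdot \sup_{x \in A_L}\|x\| \leq 2\sqrt{2}\, L\mmax$, hence $\|Y_i\| \leq 2L\mmax\sqrt{2k}$. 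The first-moment estimate $\E f \leq (\Var(Y_1)/n)^{1/2} \leq 2L\mmax\sqrt{2k/n}$ then follows from Jensen's inequality applied to $f^2$, while tail concentration of $f$ around $\E f$ is provided by McDiarmid's bounded-differences inequality (or, equivalently, a Hoeffding-type inequality for Hilbert-space-valued sums such as Pinelis's). Summing these two contributions with probability at least $1 - e^{-x}$ yields \eqref{eq:transportoncell}.

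The main obstacle will be the propagation of the almost-sure bound $\mu_i(V_j(\bc)) \leq 2\mmax$ to codebooks $\bc$ distinct from $\bc^*$: by definition $\mmax$ only constrains the mass assigned to the cells of $\bc^*$, so a continuity argument for $\bc \mapsto V_j(\bc)$---together with a perturbation estimate exploiting the uniform bound $\Pers_p(\mu)\leq M$ on measures in $\MM^p_{L,M}$---is needed to extend this uniform a.s.\ control to a small neighbourhood of $\bc^*$. Once this technical point is secured, what remains is careful bookkeeping to obtain the precise numerical constants stated in the lemma.
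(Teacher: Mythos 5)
Your proposal follows essentially the same route as the paper's proof: Bernstein's inequality applied to the scalar random variables $\mu_i(V_j(\bc))$ with the variance bound $\E[\mu(V_j(\bc))^2]\leq 2\mmax\, p_j(\bc)$ and a union bound over $j$ for \eqref{eq:bernstein}; and for \eqref{eq:transportoncell}, the representation $w_2(\bc,\mu)_j=\mu(f_j)$ with $f_j=x\ones\{x\in V_j(\bc)\}$, a Jensen/variance bound on $\E\|w_2(\bc,\overline\mu_n)-w_2(\bc,\EPD)\|$, and concentration via the bounded-differences (McDiarmid) inequality. Your observation that the hypothesis ``$\hat p_j(\bc)\leq 2\mmax$'' must in fact be read as a \emph{deterministic, uniform} bound $\mu(V_j(\bc))\leq 2\mmax$ over $\mu\in\MM^p_{L,M}$ (since $\mmax$ is defined with respect to the cells of $\bc^*$) is a valid and worthwhile remark that the paper glosses over; it is resolved by noting that for $\bc$ in a small enough neighbourhood of $\bc^*$ the cells $V_j(\bc)$ for $j\leq k$ remain at distance at least $\Dmin/2-\|\bc-\bc^*\|$ from $\thediag$, which, combined with $\Pers_p(\mu)\leq M$, gives a mass bound of the same order as $\mmax\leq 2^pM/\Dmin^p$. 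This is a short perturbation argument rather than a genuine obstacle, but you were right to flag it.
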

The proof of this lemma follows from standard concentration inequalities.

\begin{proof}[Proof of Lemma \ref{lem:key_lemma2}]
	Equation \eqref{eq:bernstein} follows from Bernstein inequality applied to the real-valued random variable $0 \leq \hat p_j(\bc) \leq 2 \mmax$, with variance bounded by 
$\E[\mu(V_j(\bc))^2]/n \leq \mmax p_j(\bc)/n.$
	
	For equation \eqref{eq:transportoncell}, we introduce the function $f_j:x\mapsto x\ones\{x\in V_j(\bc)\}$, so that $w_2(\bc,\mu)_j=\mu(f_j)$, the integral of $f_j$ against $\mu$. We have $w_2(\bc,\mu_n)_j-w_2(\bc,\EPD)_j= n^{-1}\sum_{i=1}^n(\mu_i(f_j)-\EPD(f_j))$. Note that $\|\mu_i(f_j)\|\leq \sqrt{2} L  \cdot 2\mmax$. We write
	\begin{align*}
	    &\E\left\| \frac{1}{n}\sum_{i=1}^n(\mu_i(f_j)-\EPD(f_j))_j\right\|\\
	    &\leq \sqrt{\E\left\| \frac{1}{n}\sum_{i=1}^n(\mu_i(f_j)-\EPD(f_j))_j\right\|^2} \\
	    &\leq \sqrt{\frac{1}{n}\E \left\|(\mu_1(f_j))_j\right\|^2} \leq 2\sqrt{\frac{k}{n}}\sqrt{2}L\mmax.
	\end{align*}
	Also, note that $F(\mu_1,\dots,\mu_n)=\|w_2(\bc,\mu_n)-w_2(\bc,\EPD)\|$ satisfies a bounded difference condition of parameter $4\sqrt{2}L\mmax$ \citep[Sec.~6.1]{boucheron2013concentration}. 
	A bounded difference inequality \citep[Thm.~6.2]{boucheron2013concentration} yields the result.
\end{proof}

\begin{figure}
    \centering
    \includegraphics[width=0.7\columnwidth]{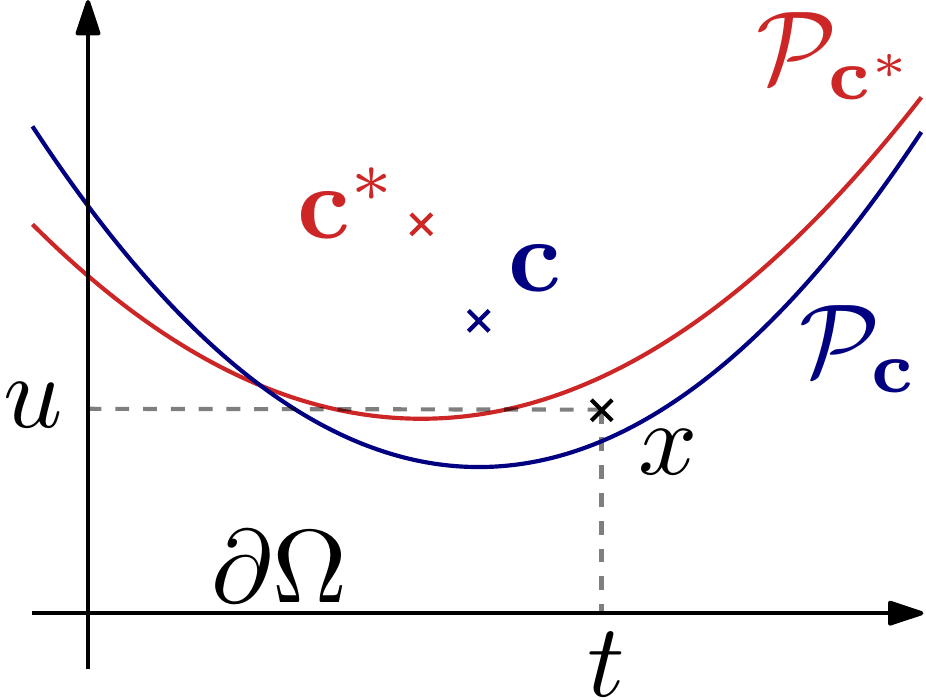}
    \caption{Illustration of the proof of Lemma \ref{lemma:gradient_lipschitz}}
    \label{fig:parabola}
\end{figure}

The proof of \cref{lemma:gradient_lipschitz} relies on the following lemma, that essentially tells that the area of misclassified points when using a codebook $\bc$ instead of an optimal one $\bc^*$ can be controlled linearly in terms of $\|\bc^* - \bc\|$. Note that this result is well-known when boundaries between the cells are hyperplanes (as it is the case in standard quantization), it remains to treat the case when the boundary is a parabola. Let $d(x,A)$ be the distance from a point $x\in \Omega$ to $A\subset \Omega$.
\begin{lemma}
	Let $\bc^*$ be an optimal codebook, and $\bc \in A_L^k$. Let $x \in A_L$ and $1\leq j\leq k$. Assume that $x \in V_j(\bc^*) \cap V_{k+1}(\bc)$. Then, $d(x, \partial V_j(\bc^*)) \leq \frac{7L^2}{2\Dmin^2}\| \bc^* - \bc\|$.
	Symmetrically, if $x \in V_{k+1}(\bc^*) \cap V_j(\bc)$, one has $d(x, \partial V_{k+1}(\bc^*)) \leq \frac{7L^2}{2\Dmin^2}\| \bc^* - \bc\|$.
	\label{lemma:dependence_cells_on_centroid}
\end{lemma}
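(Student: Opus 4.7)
The key idea is to convert the set-theoretic assumption $x \in V_j(\bc^*) \cap V_{k+1}(\bc)$ into a scalar inequality and then into a geometric distance bound via an explicit construction. The inclusions give $\|x - c_j^*\| \leq \|x - \thediag\| \leq \|x - c_j\|$, so the triangle inequality $\|x - c_j\| \leq \|x - c_j^*\| + \|c_j - c_j^*\|$ yields
\[ f(x) \defeq \|x - \thediag\| - \|x - c_j^*\| \leq \|c_j - c_j^*\| \leq \|\bc - \bc^*\|, \]
meaning the ``parabolic'' defining inequality of $V_j(\bc^*)$ (the one against the diagonal) is nearly tight at $x$.

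To convert this scalar closeness into a geometric distance, the plan is to parameterize the segment $z_s = (1-s) x + s\, \projdiag(x)$, $s \in [0,1]$. Because this direction is perpendicular to $\thediag$, a direct calculation---cleanest in coordinates where $\thediag$ becomes an axis---shows that
\[ h(s) \defeq \|z_s - \thediag\|^2 - \|z_s - c_j^*\|^2 = \bigl(\|x - \thediag\|^2 - \|x - c_j^*\|^2\bigr) - 2s\,\|x - \thediag\|\,\|c_j^* - \thediag\| \]
is \emph{linear} in $s$: the two quadratic-in-$s$ contributions from the squared norms cancel exactly. Since $h(0) \geq 0$ and $h(1) = -\|\projdiag(x) - c_j^*\|^2 < 0$, the unique root $s^* \in (0,1)$ of $h$ places $z_{s^*}$ on the parabolic arc $\{y : \|y - c_j^*\| = \|y - \thediag\|\}$, a subset of $\partial V_j(\bc^*)$. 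Solving for $s^*$ and using $\|x - \thediag\| \leq L$ (as $x \in A_L$) together with $\|c_j^* - \thediag\| \geq \Dmin$ (by applying the definition of $\Dmin$ to the pair $(j, k+1)$) gives
\[ \|z_{s^*} - x\| = s^*\,\|x - \projdiag(x)\| \leq \frac{\|x - \thediag\|\, f(x)}{\|c_j^* - \thediag\|} \leq \frac{L}{\Dmin}\,\|\bc - \bc^*\|. \]
The segment from $x$ to $z_{s^*}$ must exit $V_j(\bc^*)$ at or before $s = s^*$---either through the parabolic boundary at $z_{s^*}$ itself or earlier through a perpendicular-bisector boundary shared with some $V_{j'}(\bc^*)$---so the first exit point lies on $\partial V_j(\bc^*)$ and is no farther from $x$ than $z_{s^*}$.

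The symmetric case $x \in V_{k+1}(\bc^*) \cap V_j(\bc)$ proceeds identically, now with a segment from $x$ in the direction \emph{away} from $\thediag$: a mirror triangle argument gives $\|x - c_j^*\| - \|x - \thediag\| \leq \|\bc - \bc^*\|$, and the analogous perpendicular motion yields the same linear-in-$s$ cancellation (with the same $\|c_j^* - \thediag\| \geq \Dmin$ in the denominator) and the bound on $d(x, \partial V_{k+1}(\bc^*))$. The main obstacle is purely bookkeeping of the constant: the clean argument above produces the factor $L/\Dmin$, whereas the statement asks for $\tfrac{7L^2}{2\Dmin^2}$; this larger constant is obtained by absorbing further crude factors of $L/\Dmin \geq 1$ in intermediate steps, and its precise value is inessential for the downstream use of the lemma in \cref{lemma:gradient_lipschitz}.
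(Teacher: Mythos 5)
Your proof is correct, and it takes a cleaner route than the paper's. Both arguments end up measuring the distance from $x$ to $\partial V_j(\bc^*)$ by moving perpendicularly to $\thediag$ until hitting the parabola $\{y : \|y - c_j^*\| = \|y-\thediag\|\}$, but they estimate that vertical travel differently. The paper writes the parabola as a graph $t \mapsto f(a^*,b^*,t) = \tfrac{(t-a^*)^2}{2b^*} + \tfrac{b^*}{2}$, observes that $x$ lies above the parabola of $c_j^*$ and below the parabola of $c_j$, and bounds the vertical gap $f(a,b,t)-f(a^*,b^*,t)$ by a Lipschitz estimate on the partial derivatives of $f$ in $(a,b)$, producing the constant $\tfrac{7L^2}{2\Dmin^2}$. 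You instead reduce the cell-membership constraints to the single scalar inequality $\|x-\thediag\| - \|x-c_j^*\| \le \|\bc-\bc^*\|$ via the triangle inequality, and then exploit the observation that along a vertical segment the quadratic terms in $s$ cancel, so $h(s)$ is exactly linear; solving for the root gives the vertical gap in closed form, $s^*\|x-\thediag\| = \tfrac{\|x-\thediag\|^2 - \|x-c_j^*\|^2}{2\|c_j^*-\thediag\|}$, which factors and bounds by $\tfrac{L}{\Dmin}\|\bc-\bc^*\|$ using $\|x-c_j^*\|\le\|x-\thediag\|\le L$ and $\|c_j^*-\thediag\|\ge\Dmin$. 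Since $\Dmin\le L$, this is sharper than the paper's $\tfrac{7L^2}{2\Dmin^2}$, and the symmetric case goes through mutatis mutandis with $\|x-c_j^*\|+\|x-\thediag\|\le(1+\sqrt{2})L$ in the numerator. One loosely worded phrase: the parabolic arc as a whole is not a subset of $\partial V_j(\bc^*)$ (parts of it may lie in the interior of other cells); what you actually need---and your final sentence correctly uses---is that $z_{s^*}\notin V_j(\bc^*)$, so the first exit point along the segment lies on $\partial V_j(\bc^*)$ and is at least as close to $x$. In summary, your approach buys a closed-form computation and a better constant; the paper's approach is a more generic Lipschitz-in-the-parameters estimate that avoids factoring the squared-distance difference.
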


\begin{figure*}
    \centering
    \includegraphics[width=0.8\textwidth]{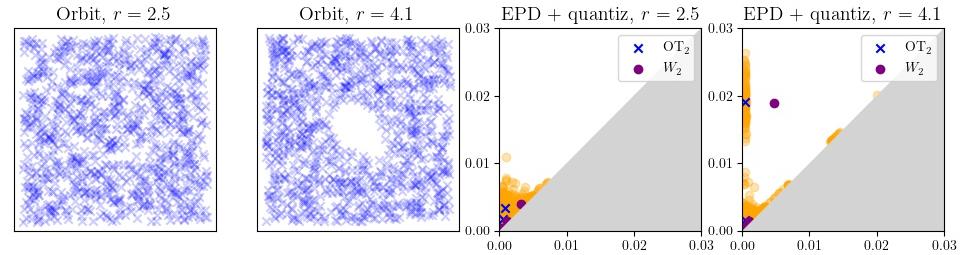}
    \caption{(Left) Two observations of the \texttt{ORBIT5K} dataset from two different classes (whose dynamics depend on a parameter $r$, see \citep{tda:adams2017persistenceImages} for details). (Right) The empirical EPD (orange) observed for these two classes and the corresponding quantization obtained using our $\OT_2$ algorithm with $k=2$ and the $W_2$ algorithm \citep{chazal2020optimal} with $k=3$. As we account for the diagonal in a natural geometric way in our formulation, our quantization reflects the structure of the empirical EPD in a better way. This is especially striking in the case $r=4.1$ (most right plot) where a centroid for the $W_2$ algorithm is deviated to a peculiar position due to the presence of few points close to the diagonal. Such points belong to the diagonal cell $V_{k+1}$ in our setting.}
    \label{fig:orbits}
\end{figure*}

\begin{proof}[Proof of \cref{lemma:dependence_cells_on_centroid}]
For convenience, we write in this proof the coordinates of points in the basis $(\thediag,\thediag^\perp)$, that $x\in \upperdiag$ will have coordinates $(a,b)$ where $a$ is the projection of $x$ on $\thediag$ and $b=\|x-\thediag\|$. Also, given $y=(a,b)\in\upperdiag$, we let $\mathcal{P}_{y}$ be the parabola with focus $y$ and directrix $\thediag$. To put it another way, if $y=(a,b)$, then $\PP_y$ is the image of $\thediag$ by the map
\[ f(a,b,\cdot):t \mapsto \frac{(t-a)^2}{2b} + \frac{b}{2}. \]
 One can check that for all $t\in [-L/2,L/2]$, if $b=\|y-\thediag\|\geq \Dmin$, we have $\left| \frac{\partial f}{\partial a} \right| \leq \frac{L}{\Dmin}$ and $\left| \frac{\partial f}{\partial b} \right| \leq \frac{1}{2} + \frac{(t-a)^2}{b}\frac{1}{b} \leq \frac{1}{2} + \frac{2L^2}{\Dmin^2}$.
 
Let $c_j^*=(a^*,b^*)$ and $c_j=(a,b)$.
 Let $x = (t,u) \in V_j(\bc^*)\cap V_{k+1}(\bc)$. Then, $u\geq f(a^*,b^*,t)$, whereas $u\leq f(a,b,t)$. The distance $d(x,\partial V_j(\bc^*))$ is smaller than $u-f(a^*,b^*,t)$
\begin{align*}
&u-f(a^*,b^*,t) \leq f(a,b,t)-f(a^*,b^*,t) \\
&\leq |f(a^*, b^*, t) - f(a, b^*, t)| + |f(a, b^*, t) - f(a, b, t)| \\
&\leq \int_{a \wedge a^*}^{a \vee a^*} \left| \frac{\partial f}{\partial a}(\alpha, b^*, t) \right| \dd \alpha + \int_{b \wedge b^*}^{b \vee b^*} \left| \frac{\partial f}{\partial b}(a, \beta, t) \right| \dd \beta \\
&\leq \frac{L}{\Dmin}|a - a^*| + \p{\frac{1}{2}+\frac{2L^2}{\Dmin^2}}|b - b^*|  \\
&\leq \p{\frac{1}{2} + \frac{L}{\Dmin} + \frac{2L^2}{\Dmin^2}} \| \bc - \bc^*\| \leq \frac{7}{2} \frac{L^2}{\Dmin^2} \| \bc - \bc^*\|,
\end{align*}
which proves the claim.
\end{proof}

\begin{proof}[Proof of \cref{lemma:gradient_lipschitz}]\renewcommand{\qedsymbol}{}
This proof is inspired from \citep[Appendix A.3]{levrard2015nonasymptotic}. 
Let us prove the first point. One has, with $t=\frac{7L^2}{2\Dmin^2} \| \bc - \bc^* \|\leq r_0$,
\begin{align*}
	&\sum_{j=1}^k |p_j(\bc) - p_j(\bc^*)| = \sum_{j=1}^k \left|\EPD(V_j(\bc))-\EPD(V_j(\bc^*) \right| \\
							&\qquad\leq 2 \sum_j \sum_{j' \neq j} \EPD( V_j(\bc) \cap V_{j'}(\bc^*) ) \\
							&\qquad \leq 2 \EPD[ N(\bc^*)^t] \leq 2 \lambda t \leq 2\lambda r_0.
\end{align*}
where we applied \cref{lemma:dependence_cells_on_centroid} and the margin condition. To prove the second inequality, remark that $w_2(\bc,\EPD)_j = \int_{V_j(\bc)} x\dd \EPD(x)$. Therefore,
\begin{align*}
&\|w_2(\bc,\EPD)-w_2(\bc^*,\EPD)\| \\
&\leq \sum_{j=1}^k \|w_2(\bc,\EPD)_j-w_2(\bc^*,\EPD)_j\|\\
&\leq \sum_{j=1}^k \left \| \int_{V_j(\bc)} x\dd \EPD(x) - \int_{V_j(\bc^*)} x\dd \EPD(x)\right \|\\
&\leq  2 \sum_j \sum_{j' \neq j} \int_{V_j(\bc) \cap V_{j'}(\bc^*)}\|x\|\dd \EPD(x)\\
&\leq 2\sqrt{2}L\lambda t\leq 7\sqrt{2}\lambda\frac{ L^3}{\Dmin^2}\|\bc-\bc^*\|. && \square
\end{align*}
\end{proof}

\section{Complementary experiments}

\begin{figure}
    \centering
    \includegraphics[width=0.9\columnwidth]{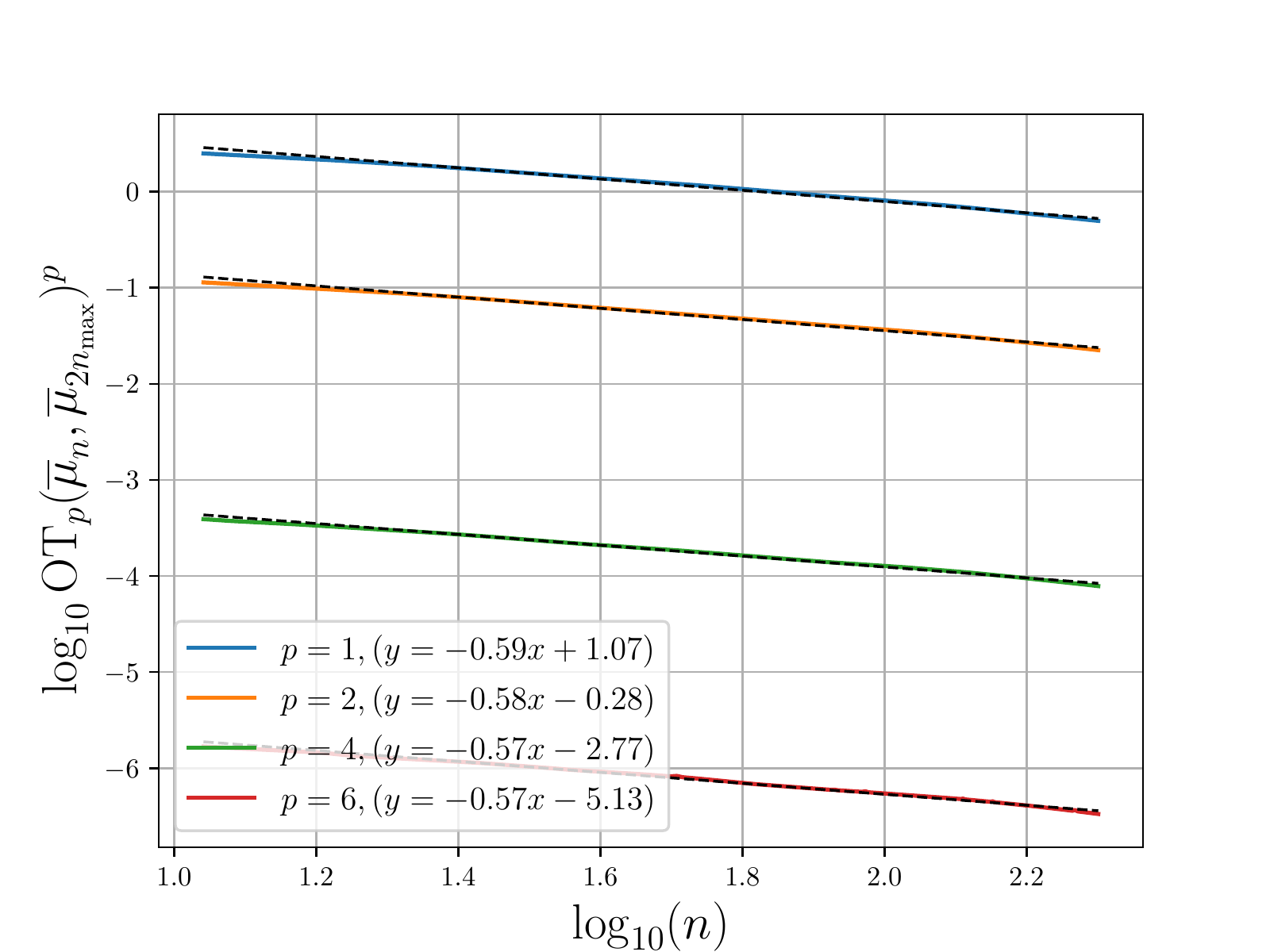}
    \caption{Convergence for $p=1,2,4,6$, each exhibiting a rate $\sim n^{-1/2}$.}
    \label{fig:cv_rate}
\end{figure}

\cref{fig:cv_rate} showcases the convergence rate of the empirical EPD using the same setting as in \cref{fig:torus_expe} (points sampled on the surface of the torus) for different values of $p$, each of them exhibiting a rate of $n^{-1/2}$.

We perform a complementary experiment on the \texttt{ORBIT5K} dataset \citep[\S 6.4.1]{tda:adams2017persistenceImages}, a benchmark dataset in TDA made of $5$ classes with 1000 observations each (split into 70\%/30\% training/test) representing different dynamical systems, turned into PDs through \v Cech filtrations. For each class $i \in \{1,\dots,5\}$, we compute a $2$-quantization $\nu^{(i)}$ using our $\mathrm{OT}_2$ algorithm and a $3$-quantization $\zeta^{(i)}$ using the standard $W_2$ approach as in \citep{chazal2020optimal}, i.e.~without the diagonal cell $V_{k+1}$ (but with an additional centroid). We then build two simple classifiers: the predicted class assigned to a test diagram $\mu$ is $\argmin_i \{\mathrm{OT}_2(\mu, \nu^{(i)})\}$ (resp.~$(\mu,\zeta
^{(i)})$). Our $\mathrm{OT}_2$ classifier achieves a decent test accuracy of $61\%$. Advanced (kernels, deep-learning) methods in TDA reach between $72\%$ and $87\%$ of accuracy \citep[Table 1]{tda:carriere2019perslay}; but we stress that our classifier is extremely simple (we summarize a whole training class by a measure with only $k=2$ points!), showcasing that our quantizations summarize the training PDs in an informative way. More importantly, the $W_2$ classifier (with $k=3$) only achieves $50\%$ of test accuracy even though benefiting from an additional centroid, illustrating the importance of properly accounting for the diagonal as done in our approach.

\end{document}